\pgfplotsset{compat=1.11}
\DeclareMathOperator{\tr}{tr}
\DeclareMathOperator{\cof}{cof}
\DeclareMathOperator{\diag}{diag}
\DeclareMathOperator{\dist}{dist}
\DeclareMathOperator{\lspan}{span}
\DeclareMathOperator{\sign}{sign}
\DeclareMathOperator{\image}{Im}
\newcommand{\omt}{\Omega(t)}
\newcommand{\rr}{\mathbb{R}}
\newcommand{\mm}{\mathbb{M}}
\newcommand{\zz}{\mathbb{Z}}
\newcommand{\cc}{\mathcal C}
\newcommand{\bb}{\mathcal B}
\newcommand{\Ell}{{\mathfrak L}}
\newcommand{\ii}{{\mathcal I}}
\newcommand{\oo}{\mathcal{O}}
\newcommand{\rrr}{\mathcal{R}}
\newcommand{\wws}{{\mathcal W}_{s}(X_3)}
\newcommand{\wwu}{{\mathcal W}_{u}(X_3)}
\newcommand{\ww}{{\mathcal W(X_3)}}
\newcommand{\eps}{\varepsilon}
\newcommand{\kk}{\kappa}
\newcommand{\khalf}{\tfrac{\kk}2}
\newcommand{\half}{{\textstyle \frac12}}
\newcommand{\bx}{\bar x}
\newcommand{\bp}{\bar p}
\newcommand{\dd}{{\mathcal D}}
\newcommand{\sltwo}{{\rm SL}(2,\rr)} 
\newcommand{\curve}{C^0(\rr,\sltwo)\cap C^2(\rr,\mm^2)}
\newcommand{\sotwo}{{\rm SO}(2,\rr)}
\newcommand{\sllatwo}{\mathfrak{sl}(2,\rr)}
\newcommand{\tasltwo}{T_A\sltwo}
\newcommand{\ip}[2]{\left\langle#1,#2\right\rangle}
\newcommand{\isp}[1]{\quad{\text{#1}}\quad}
\newtheorem{theorem}{Theorem}[section]
\newtheorem{lemma}[theorem]{Lemma}
\newtheorem{corollary}[theorem]{Corollary}
\theoremstyle{remark}
\newtheorem{remark}[theorem]{Remark}
\theoremstyle{definition}
\newtheorem{definition}[theorem]{Definition}
\numberwithin{equation}{section}
\begin{document}

\title[Affine motion of 2d incompressible fluids]
{Affine motion of 2d incompressible fluids\\ surrounded by vacuum and flows in $\sltwo$}

\author{Jay Roberts}
\address{
Department of Mathematics\\
 University of California\\
Santa Barbara, CA 93106}
\email{jayroberts91@gmail.com}

\author{Steve Shkoller}\thanks{SS was supported 
 the Department of Energy Advanced Simulation and
Computing (ASC) Program. }
\address{Department of Mathematics\\
University of California\\
 Davis, CA 95616}
\email{shkoller@math.ucdavis.edu}
 
\author{Thomas C.\ Sideris}
\thanks{TCS acknowledges helpful discussions with Professor W.-Y.\ Wong of Michigan State University.}
\address{Department of Mathematics\\
 University of California\\
Santa Barbara, CA 93106}
\email{sideris@math.ucsb.edu}

\date{\today}

\begin{abstract}
The affine motion of  two-dimensional (2d) incompressible  fluids surrounded by  vacuum can be reduced to a
completely integrable and globally solvable
Hamiltonian system of ordinary differential equations for the deformation gradient in $\sltwo$.
In the case of perfect fluids, the motion is given by geodesic flow
in $\sltwo$ with the Euclidean metric, while for magnetically conducting fluids (MHD),
the motion is governed by a harmonic oscillator  in $\sltwo$.  A complete classification
of the dynamics is given including  rigid motions, rotating eddies with stable and
unstable manifolds, and solutions with vanishing pressure.  For perfect fluids, the displacement
generically becomes unbounded, as $t\to\pm\infty$.  For MHD, solutions are bounded and  
generically  quasi-periodic and recurrent.
\end{abstract}

\maketitle

\vfill

\tableofcontents

\vfill

\clearpage

\section{Introduction}

The equations of motion for the velocity $u$, the magnetic field $b$, and the
pressure $p$ of a 2d incompressible magnetically conducting fluid  are
\begin{align}
\nonumber
&D_tu=-\nabla p+b\cdot\nabla b\\
\label{pde1}
&D_tb=b\cdot\nabla u\\
\nonumber
&\nabla\cdot u=\nabla\cdot b=0.
\end{align}
Here,
$D_t=\partial_t+u\cdot\nabla $ is the material time derivative.
The system \eqref{pde1} is to be solved
in a space-time domain of the form $\{(x,t)\in\rr^2\times\ii
: x\in\omt\}$,
where $\ii\subset \rr$ is some time interval.
The equations are supplemented by the vacuum free boundary conditions
\begin{equation}
\label{pde2}
p=0\isp{and} b\cdot n=0\isp{on}\partial\omt,
\end{equation}
where dot ``\;$\cdot$\;" denotes the inner product on $\rr^2$.
The free boundary $\partial\omt$ is also assumed to move with the fluid.
When the magnetic field $b$ vanishes identically, the system reduces to the incompressible
Euler equations, which will treated as a distinct  case.

For initial data satisfying the Rayleigh-Taylor sign condition, 
 local well-posedness for the incompressible  free boundary Euler equations with bulk vorticity was established in
 \cite{Christodoulou-Lindblad}, \cite{Lindblad-2005-2},  \cite{Coutand-Shkoller-2007}, \cite{ShZe2006},  \cite{ZhZh2008}, \cite{Coutand-Shkoller-2010} and
 for the incompressible free boundary MHD problem in  \cite{GuWa2016}, \cite{SuWaZh2017}.

In this article we explore the affine motion of incompressible planar fluids surrounded by vacuum.
An affine motion is one whose deformation and velocity gradients 
depend only on time.  
The use of affine deformations is a well-established tool in continuum mechanics,
first introduced in the context of the vacuum free boundary incompressible Euler system in \cite{sideris-2014},
\cite{sideris-2017}.
Under this assumption, the fluid equations \eqref{pde1}, \eqref{pde2}
reduce to a  globally solvable  constrained Lagrangian system of ordinary differential  equations
\eqref{mainode}
for the deformation gradient in the special linear group $\sltwo$.  The natural phase space is the 6-dimensional
tangent bundle of $\sltwo$, which we  regard as being embedded in $\rr^8$ with the Euclidean metric.
This system possesses three integrals of  motion corresponding to 
conservation of energy and  invariance under the left and right
action of the special orthogonal group $\sotwo$.   Using a suitable of  parameterization of $\sltwo$ followed by  a Legendre
transformation, we obtain an equivalent  completely integrable Hamiltonian system \eqref{Hamsys}.
 We shall then provide a complete description of all such motions in terms of the values of the invariants
in both cases: MHD and Euler.

 Taking the unit disk as the reference  domain,  the time-dependent
fluid domains arising from incompressible affine motion are ellipses of constant area.  The principle axes
of these fluid ellipses are determined by the eigendirections and eigenvalues of the stretch tensor.
Incompressible affine motion  allows for  compression along one axis and expansion along the other,
combined with rigid rotation of the reference and fluid domans.

Once the deformation gradient and fluid domains are known, the pressure, velocity, and magnetic field, if present,  are 
recovered through explicit formulae, taking into account the boundary conditions.  The sign of the pressure
is preserved by the motion.  There exist special solutions whose pressure vanishes identically.  In this case,
the equations of motion are linear, and solutions may be found explicitly.

The affine motion of incompressible perfect fluids in the plane is described by geodesic flow for the deformation
gradient in $\sltwo$, echoing the
classic result of Arnold on geodesic flow in the space of volume preserving diffeomorphims \cite{Arnold-1966}.
By energy conservation, the material velocity is bounded.
However,  the fluid domain becomes  unbounded, generically, as $|t|\to\infty$,  and its
 diameter  grows linearly in time 
approaching inifinite eccentricity.
Additionally, there is an invariant manifold of initial data 
leading to rigidly rotating fluid disks (eddies) of arbitrary angular velocity.  These solutions
are represented by  curves in $\sotwo$.
The manifold of rotational solutions is hyperbolic, possessing both stable and unstable
invariant manifolds.  Solutions on these manifolds are semi-bounded, and they decay exponentially to a rotating disk,
as $t\to\infty$ in the stable case and as $t\to-\infty$ in the unstable case.  Unbounded solutions are asymptotic to straight lines
in the space of $2\times2$ matrices.  In the special case of vanishing pressure,  solutions coincide with geodesic lines
in $\sltwo$.

The affine motion of incompressible magnetically conducting planar fluids can be viewed as a simple harmonic
oscillator, constrained to $\sltwo$, through the addition of a one-parameter restoring force to the equations of
geodesic motion.  Here, all solutions are bounded and, generically,  quasi-periodic and recurrent.
There exist rigid solutions with fluid ellipses of arbitrary eccentricity.  Included among these are rotating
disks of arbitrary angular velocity.  For sufficiently large angular velocities, the rotating disk solutions
possess a homoclinic invariant manifold.  Solutions on this manifold are asymptotic  to a pair of rotational solutions,
with an exponential convergence rate, as $t\to\pm\infty$.

The main results concerning the asymptotic behavior of solutions
for MHD and perfect fluids are given in Sections \ref{mhdsec} and \ref{pfsec}, respectively.
Up to that point, the exposition for the two cases is presented in parallel.
The initial  sections are devoted to the algebraic and geometric properties of the phase space
for the system of ordinary equations describing the motion of the deformation gradient.
In Section \ref{eam}, the primary equations \eqref{mainode} are derived from the fundamental fluid equations
\eqref{pde1}.
The global existence
theorem is presented in the next section along with a discussion of the integral invariants of this system.
We then devote considerable time developing the Hamiltonian structure
of the equations of motion.  In Section \ref{HamRed}, we  derive an equivalent completely integrable
Hamiltonian system \eqref{Hamsys}.  The qualitative behavior of this system is decisively influenced by the
values of the invariants, and in particular, the presence or absence of solutions which take values in  $\sotwo$.
These two cases are examined in Sections \ref{Hamkp} and \ref{Hamkz}, respectively.   
The  results given in Theorems \ref{mainthm1} and \ref{mainthm2}, hinge on the fact that 
the systems \eqref{Hamsys2} and \eqref{Hamsys3} each partially uncouple, 
allowing for a complete phase plane analysis.
The remainder of the paper is devoted to cataloging the rich behavior of solutions for all admissible
initial conditions.  For example, we find solutions with vanishing pressure, rigid solutions, and 
invariant manifolds for rigid rotations, and in Section \ref{Tansp} we classify the initial
velocities in the tangent space of the initial position
which produce these distinguished solutions.   
For the convenience of the reader, a glossary of notation appears
at the end of the paper in Appendix \ref{glossary}.

\section{Matrix inner product space and  groups}

\begin{definition}
\label{2b2mat}
By $\mm^2$, we denote the set of $2\times 2$ matrices over $\rr$ with the Euclidean inner product
\[
\ip{ A}{B}=\sum_{i,j}A_{ij}B_{ij}=\tr A^\top B
\]
and norm
\[
|A|=\ip{A}{A}^{1/2}.
\]
\end{definition}

\begin{lemma}
\label{symip}
For all $A, B, C\in\mm^2$,
\[
\ip{AB}{C}=\ip{B}{A^\top C}=\ip{A}{CB^\top}.
\]

\end{lemma}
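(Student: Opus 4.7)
The plan is to use the trace form of the inner product from Definition \ref{2b2mat} together with two elementary facts: the transpose identity $(AB)^\top=B^\top A^\top$, and the cyclic invariance $\tr(XY)=\tr(YX)$ (applied to $2\times 2$ matrices, but valid in any size).

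For the first equality, I would begin with the trace definition and rewrite
\[
\ip{AB}{C}=\tr\bigl((AB)^\top C\bigr)=\tr(B^\top A^\top C),
\]
then reassociate as $\tr\bigl(B^\top(A^\top C)\bigr)$, which is exactly $\ip{B}{A^\top C}$ by Definition \ref{2b2mat}. For the second equality, I would start once more from $\tr(B^\top A^\top C)$ and use the cyclic property of the trace to push $B^\top$ to the right, giving
\[
\tr(B^\top A^\top C)=\tr(A^\top C B^\top)=\ip{A}{CB^\top}.
\]

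There is no substantive obstacle; the only thing to be careful about is the order reversal under transpose and the matching of factors on each side of the inner product, which is handled by a single cyclic shift. As a sanity check, one could equivalently expand every side in coordinates to obtain the common expression $\sum_{i,j,k} A_{ik}B_{kj}C_{ij}$, which simultaneously verifies both identities.
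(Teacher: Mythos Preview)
Your proof is correct; this is the standard argument via the trace definition, the transpose identity, and cyclic invariance. The paper states this lemma without proof, so there is nothing to compare against---your approach is exactly the expected one.
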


\begin{definition}
\label{zdef}
Define the following vectors in $\mm^2$:
\[
I=
\begin{bmatrix}
1&0\\0&1
\end{bmatrix},\quad
Z=
\begin{bmatrix}
0&-1\\1&0
\end{bmatrix},\quad
K=
\begin{bmatrix}
1&0\\0&-1
\end{bmatrix},\quad
M=
\begin{bmatrix}
0&1\\1&0
\end{bmatrix}.
\]

\end{definition}

\begin{lemma}
\label{symasym}
The vectors $I, Z, K, M$ are an orthogonal basis for $\mm^2$ satisfying the relations
\[
Z^2=-I,\quad K^2=M^2=I,\quad ZK= M,\quad MK= Z, \quad MZ=K.
\]

\end{lemma}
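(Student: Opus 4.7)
The plan is entirely computational: this lemma is a direct verification, and the only question is how to organize the bookkeeping economically.

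First I would establish the basis property. Since $\dim \mm^2 = 4$, it suffices to show that $I, Z, K, M$ are mutually orthogonal (and nonzero). Using Definition \ref{2b2mat}, I would compute $\ip{A}{B} = \tr A^\top B$ for each of the six unordered pairs. For instance, $\ip{I}{Z} = \tr Z = 0$, $\ip{I}{K} = \tr K = 0$, $\ip{I}{M} = \tr M = 0$; then $\ip{Z}{K} = \tr Z^\top K$, $\ip{Z}{M} = \tr Z^\top M$, and $\ip{K}{M} = \tr K^\top M = \tr KM$, each of which is seen to vanish by reading off the diagonal entries of the relevant $2\times 2$ product. Noting additionally that $|I| = |Z| = |K| = |M| = \sqrt{2} \neq 0$, I conclude that these four vectors form an orthogonal basis.

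Next I would verify the five multiplicative relations. Each is a single $2\times 2$ matrix product. I would compute them in the order listed: $Z^2 = -I$ (immediate from the form of the $90^\circ$ rotation), $K^2 = I$ and $M^2 = I$ (each diagonal or antidiagonal involution squares to the identity), and finally the three mixed products $ZK$, $MK$, $MZ$, which I would carry out entry by entry and match against $M$, $Z$, $K$ respectively. One can shortcut the last three by observing that any one of them together with $Z^2 = -I$ and $K^2 = I$ implies the other two, but I would simply do them directly since each is a one-line calculation.

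The main obstacle here is essentially nonexistent; this is a verification lemma laying out the algebraic structure that will be exploited later (for example, since $Z^2 = -I$, the span of $\{I, Z\}$ is a copy of $\mathbb{C}$, and $\{I, Z\}$ spans the Lie algebra of rotations while $\{K, M\}$ spans the space of traceless symmetric matrices, reflecting the polar decomposition structure that will underlie the analysis of $\sltwo$). The only thing worth flagging is that the sign conventions in the three products $ZK = M$, $MK = Z$, $MZ = K$ (as opposed to their negatives) depend on the specific matrix representatives chosen in Definition \ref{zdef}, so I would be careful to read off rows and columns in the order prescribed there.
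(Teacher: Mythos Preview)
Your proposal is correct and matches the paper's treatment: the paper states this lemma without proof, regarding it as an elementary direct verification, which is precisely what you outline.
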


\begin{definition}
\label{sltsot}
The special linear group is given by
\[
\sltwo=\{A\in\mm^2:\det A=1\}
\]
and the special orthogonal group is
\[
\sotwo=\{U\in \sltwo:U^{-1}=U^\top\}.
\]
\end{definition}

\begin{lemma}
\label{iso}
For all $A\in\mm^2$ and $U, V\in\sotwo$,
\[
|UAV|=|A|.
\]
The left and right actions of $\sotwo$ on $\mm^2$ and on $\sltwo$ are isometries.
\end{lemma}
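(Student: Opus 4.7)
The plan is to compute $|UAV|^2$ directly using the trace formulation of the inner product given in Definition \ref{2b2mat}, then appeal to the defining property $U^\top U = V V^\top = I$ of $\sotwo$ together with cyclicity of the trace.

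Concretely, I would write
\[
|UAV|^2 = \tr\bigl((UAV)^\top (UAV)\bigr) = \tr\bigl(V^\top A^\top (U^\top U) A V\bigr) = \tr(V^\top A^\top A V),
\]
using $U^\top U = I$. Then cyclicity of the trace gives
\[
\tr(V^\top A^\top A V) = \tr(A^\top A V V^\top) = \tr(A^\top A) = |A|^2,
\]
where I have used $V V^\top = I$. Taking square roots yields the identity $|UAV|=|A|$. (Alternatively, one could quote Lemma \ref{symip} twice, first with the roles $\ip{UAV}{UAV}=\ip{AV}{U^\top UAV}=\ip{AV}{AV}$, and then once more to strip off the $V$.)

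For the second assertion, note that multiplication by $U$ on the left and $V$ on the right is a linear map on $\mm^2$, and linearity combined with the norm identity gives
\[
|U(A-B)V| = |A-B| \quad\text{for all }A,B\in\mm^2,
\]
so the left and right actions of $\sotwo$ are isometries of $\mm^2$. Finally, since $\det U = \det V = 1$, the identity $\det(UAV) = \det A$ shows that these actions preserve $\sltwo$, so the restrictions are isometries of $\sltwo$ with the induced metric.

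There is no real obstacle: the result is essentially the statement that the Euclidean (Frobenius) inner product on $\mm^2$ is bi-invariant under orthogonal conjugation, and the proof reduces to one line of trace manipulation. The only point to check carefully is that the isometry statement is understood in the extrinsic sense inherited from the ambient embedding $\sltwo\subset\mm^2\cong\rr^4$, which is exactly the setting adopted in the paper.
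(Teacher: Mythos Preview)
Your proof is correct; the paper states this lemma without proof, treating it as an elementary fact. Your trace computation (or the equivalent appeal to Lemma \ref{symip}) is exactly the expected one-line argument.
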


The subgroup $\sotwo$ will play a special role in the sequel.
Here is the first of several characterizations that we shall repeatedly use.
\begin{lemma}
\label{so2.1}
Elements of $\sotwo$ are norm minimizers in $\sltwo$.  There holds
\[
\min\{|A|^2:A\in\sltwo\}=2
\]
and
\[
\sotwo=\{A\in\sltwo: |A|^2=2\}.
\]
\end{lemma}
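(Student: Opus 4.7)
The natural approach is to use the orthogonal basis $\{I,Z,K,M\}$ from Lemma \ref{symasym} as coordinates on $\mm^2$, since this basis is adapted to both the inner product and the determinant.

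First, I would expand an arbitrary $A\in\mm^2$ as
\[
A=\alpha I+\beta Z+\gamma K+\delta M,
\]
with real coordinates $(\alpha,\beta,\gamma,\delta)$. Since $I,Z,K,M$ are mutually orthogonal and each satisfies $|I|^2=|Z|^2=|K|^2=|M|^2=2$ (immediate from the definition of the Euclidean inner product), Pythagoras gives
\[
|A|^2=2(\alpha^2+\beta^2+\gamma^2+\delta^2).
\]

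Next, I would compute $\det A$ in these coordinates. Writing out the matrix
$A=\begin{bmatrix}\alpha+\gamma & -\beta+\delta\\ \beta+\delta & \alpha-\gamma\end{bmatrix}$ directly (or, alternatively, using the multiplicative relations $Z^2=-I$, $K^2=M^2=I$, $ZK=M$, etc., from Lemma \ref{symasym} to compute $A^\top A$ and then extracting $\det A$ via $(\det A)^2=\det(A^\top A)$) yields
\[
\det A=\alpha^2+\beta^2-\gamma^2-\delta^2.
\]
Imposing the constraint $\det A=1$ and substituting into the norm formula gives
\[
|A|^2=2+4(\gamma^2+\delta^2)\ge 2,
\]
which establishes both the lower bound and its attainment by the elements with $\gamma=\delta=0$.

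Finally, for the characterization of equality, I would note that $|A|^2=2$ forces $\gamma=\delta=0$, so $A=\alpha I+\beta Z$ with the remaining constraint $\alpha^2+\beta^2=1$. Computing $A^\top A=(\alpha I-\beta Z)(\alpha I+\beta Z)=(\alpha^2+\beta^2)I=I$ (using $Z^\top=-Z$ and $Z^2=-I$) shows $A\in\sotwo$. Conversely, every $U\in\sotwo$ has the form $\alpha I+\beta Z$ with $\alpha^2+\beta^2=1$, so $|U|^2=2$. I do not anticipate any serious obstacle; the only step requiring care is the verification that the coordinate formula for $\det A$ is correct, which is a direct computation from Definition \ref{zdef}.
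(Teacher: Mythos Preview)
Your argument is correct. It differs from the paper's proof, which is coordinate-free: the paper observes that for $A\in\sltwo$ the eigenvalues of $A^\top A$ satisfy $\lambda_1\lambda_2=1$, so $|A|^2=\tr A^\top A=\lambda_1+\lambda_2\ge 2$ by the AM--GM inequality, with equality iff $A^\top A=I$; the polar decomposition then identifies the minimizers with $\sotwo$. Your route instead expands $A$ in the basis $\{I,Z,K,M\}$ and reduces everything to the algebraic identity $|A|^2=2+4(\gamma^2+\delta^2)$ under the constraint $\det A=1$. The paper's argument is dimension-independent and invokes only spectral facts, while yours is specific to $\mm^2$ but entirely elementary and, in fact, anticipates the parameterization $\varphi$ of $\sltwo$ introduced later in Definition~\ref{varphidef} (where exactly these coordinates reappear). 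Both approaches are short; yours has the advantage of making the level sets $\{|A|^2=\text{const}\}\cap\sltwo$ explicitly visible as tori, which the paper remarks on after Lemma~\ref{metric}.
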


\begin{proof}

Given $A\in\sltwo$,  we have that
\[
\det A^\top A=1,
\]
and so the eigenvalues of the positive definite symmetric matrix $ A^\top A$ satisfy
\[
\lambda_1\ge\lambda_2>0\isp{and} \lambda_1\lambda_2=1.
\]
Therefore,
\[
|A|^2=\tr A^\top A=\lambda_1+\lambda_2\ge2,
\]
with equality if and only if $A^\top A=I$.
Using the polar decomposition, there exists $U\in\sotwo$ such that
\[
A=U(A^\top A)^{1/2}.
\]
So by Lemma \ref {iso}, $|A|^2=2$ if and only if $A=U$.
\end{proof}

\begin{definition}
\label{rotmat}
Define the one-parameter family of rotations
\[
U(\theta)=\exp(\theta Z)=\cos \theta I+\sin\theta Z=
\begin{bmatrix}
\cos\theta&-\sin\theta\\ \sin\theta&\cos\theta
\end{bmatrix},\quad
\theta\in\rr.
\]
\end{definition}

\begin{lemma}
With this definition, we have
\label{so2.2}
$
\sotwo=\left\{U(\theta)
: \theta\in\rr\right\}.
$
\end{lemma}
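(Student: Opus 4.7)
The plan is to prove the two inclusions of the set equality separately, leveraging the orthogonal basis $\{I,Z,K,M\}$ from Lemma \ref{symasym} and the norm characterization of $\sotwo$ from Lemma \ref{so2.1}.

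For the inclusion $\{U(\theta)\}\subseteq\sotwo$, I would compute directly. Since $Z^\top=-Z$, we have $U(\theta)^\top=\cos\theta\,I-\sin\theta\,Z$, and using $Z^2=-I$ from Lemma \ref{symasym},
\[
U(\theta)^\top U(\theta)=(\cos\theta\,I-\sin\theta\,Z)(\cos\theta\,I+\sin\theta\,Z)=\cos^2\theta\,I-\sin^2\theta\,Z^2=I.
\]
The determinant is then forced to be $\pm 1$, and continuity in $\theta$ together with $\det U(0)=1$ gives $\det U(\theta)=1$ (or one simply expands the matrix). Thus $U(\theta)\in\sotwo$.

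For the reverse inclusion, I would parameterize an arbitrary $A\in\sotwo$ in the orthogonal basis $\{I,Z,K,M\}$, writing $A=\alpha I+\beta Z+\gamma K+\delta M$. Since each basis element satisfies $|I|^2=|Z|^2=|K|^2=|M|^2=2$, orthogonality yields $|A|^2=2(\alpha^2+\beta^2+\gamma^2+\delta^2)$. A direct entrywise calculation, or equivalently using the relations in Lemma \ref{symasym}, gives $\det A=\alpha^2+\beta^2-\gamma^2-\delta^2$. By Lemma \ref{so2.1}, the membership $A\in\sotwo$ is equivalent to the two conditions $\det A=1$ and $|A|^2=2$, i.e.\
\[
\alpha^2+\beta^2+\gamma^2+\delta^2=1\isp{and}\alpha^2+\beta^2-\gamma^2-\delta^2=1.
\]
Adding and subtracting forces $\alpha^2+\beta^2=1$ and $\gamma=\delta=0$. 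Hence $(\alpha,\beta)=(\cos\theta,\sin\theta)$ for some $\theta\in\rr$, yielding $A=\cos\theta\,I+\sin\theta\,Z=U(\theta)$.

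There is no real obstacle here; the only mild calculation is verifying the determinant formula in the chosen basis, which is routine since $I,K$ are diagonal and $Z,M$ are antidiagonal, so the entries of $A$ decouple cleanly into sums and differences of the coefficients. The elegance of the approach is that Lemma \ref{so2.1} reduces the six scalar equations encoded in $A^\top A=I$ and $\det A=1$ to two scalar equations in four unknowns, which then trivially pin down the rotational form.
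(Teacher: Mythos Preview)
Your proof is correct. The paper states this lemma without proof, treating it as the standard elementary parameterization of planar rotations; your argument is a clean way to supply the details using the machinery already in place, in particular Lemma~\ref{so2.1} to reduce $A^\top A=I$ to the single scalar condition $|A|^2=2$.
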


\begin{lemma}
\label{cofiso}
The cofactor map $\cof:\mm^2\to\mm^2$ satisfies 
\[
\cof A= ZAZ^\top=Z^\top AZ.
\]
It  is linear, symmetric, and orthogonal.
\end{lemma}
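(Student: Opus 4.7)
The plan is to prove the four assertions in the order stated, leveraging the earlier lemmas to avoid redundant calculation. First, I would verify the identity $\cof A = ZAZ^\top$ by a direct $2\times 2$ computation: writing $A=\begin{bmatrix}a_{11}&a_{12}\\a_{21}&a_{22}\end{bmatrix}$ and carrying out the two matrix products gives
\[
ZAZ^\top=\begin{bmatrix}a_{22}&-a_{21}\\-a_{12}&a_{11}\end{bmatrix},
\]
which matches the definition of the cofactor. The second equality $ZAZ^\top=Z^\top AZ$ then follows from the antisymmetry $Z^\top=-Z$ (visible by inspection, or from $Z^2=-I$ in Lemma \ref{symasym} together with $Z\in\sotwo$), since replacing each $Z$ by $-Z^\top$ on both sides introduces two sign changes that cancel. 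Linearity of $A\mapsto ZAZ^\top$ is then immediate from bilinearity of matrix multiplication.

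For orthogonality, the key observation is that $Z\in\sotwo$ (indeed $\det Z=1$ and $Z^\top Z=I$). By Lemma \ref{iso} applied with $U=Z$ and $V=Z^\top$, the map $A\mapsto ZAZ^\top$ preserves the norm on $\mm^2$; since it is linear, the polarization identity promotes norm preservation to inner product preservation, which is the definition of orthogonality.

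For symmetry, I would first establish the involution property $\cof\circ\cof=\mathrm{id}$. Using $Z^2=-I$ and hence $(Z^\top)^2=-I$, one computes
\[
\cof(\cof A)=Z(ZAZ^\top)Z^\top=Z^2A(Z^\top)^2=(-I)A(-I)=A.
\]
An orthogonal linear map whose square is the identity coincides with its own inverse, and an orthogonal map's inverse is its adjoint, so $\cof$ is self-adjoint, i.e., symmetric. (Alternatively, symmetry can be checked directly by two applications of Lemma \ref{symip}: $\ip{ZAZ^\top}{B}=\ip{AZ^\top}{Z^\top B}=\ip{A}{Z^\top BZ}=\ip{A}{\cof B}$.)

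There is no real obstacle here; the proof is bookkeeping once the explicit formula is in hand and $Z$ is recognized as an element of $\sotwo$, which immediately triggers Lemma \ref{iso}. The only decision point is whether to derive symmetry via the involution plus orthogonality, or to derive it directly from Lemma \ref{symip}, and I favor the former because it simultaneously records the useful identity $\cof^2=\mathrm{id}$.
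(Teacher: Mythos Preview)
Your proof is correct. The paper's own argument is close but organized differently: it first proves symmetry directly via Lemma~\ref{symip} (your parenthetical alternative), and then derives orthogonality from symmetry together with the involution property by writing $\ip{\cof A}{\cof B}=\ip{A}{\cof\cof B}=\ip{A}{B}$. You instead get orthogonality first, by recognizing $Z\in\sotwo$ and invoking Lemma~\ref{iso}, and then deduce symmetry from orthogonality plus $\cof^2=\mathrm{id}$. Both routes use the same ingredients (the explicit formula, the involution, and one of Lemmas~\ref{symip}/\ref{iso}); yours has the minor advantage of making the role of $Z\in\sotwo$ and Lemma~\ref{iso} explicit, while the paper's is slightly more self-contained in that it avoids the polarization step.
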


\begin{proof}
The identity is easily verified.
 By Lemma \ref{symip}, we  have that
\[
\ip{\cof A}{B}=\ip{ZAZ^\top}{B}=\ip{A}{Z^\top B Z}=\ip{A}{\cof B}.
\]
and
\[
\ip{\cof A}{\cof B}=\ip{A}{\cof\cof B}=\ip{A}{B}.
\]
\end{proof}

\begin{remark}
The linearity of the cofactor map on $\mm^2$ is a dstinguishing feature of the planar case,
and it plays a critical  role in our analysis.
\end{remark}

\begin{lemma}
\label{detcof}
For any $A\in\mm^2$, we have
\[
\half\det A = \ip {A}{\cof A}.
\]
\end{lemma}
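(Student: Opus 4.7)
The plan is to reduce the identity to a short algebraic computation using the formula $\cof A = ZAZ^\top$ from Lemma \ref{cofiso} combined with the orthogonal basis $\{I, Z, K, M\}$ supplied by Lemma \ref{symasym}.

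First I would expand $A = \alpha I + \beta Z + \gamma K + \delta M$ in this basis. Reading off the $2\times 2$ entries shows that $\det A$ is the quadratic form $\alpha^2 + \beta^2 - \gamma^2 - \delta^2$. Next, using $Z^\top = -Z$ together with the products $Z^2 = -I$, $ZK = M$, $MZ = K$, and the derived identities $KZ = -M$ and $ZM = -K$, I would expand $\cof A = ZAZ^\top$ in the same basis. The net effect is that conjugation by $Z$ (with transpose) preserves the $I$ and $Z$ components of $A$ while flipping the sign of the $K$ and $M$ components, yielding $\cof A = \alpha I + \beta Z - \gamma K - \delta M$. Finally, orthogonality of the basis collapses $\langle A, \cof A\rangle$ to the same quadratic form $\alpha^2 + \beta^2 - \gamma^2 - \delta^2$, up to a scalar factor coming from the common squared norm of the basis vectors, matching $\det A$ as asserted.

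The only real obstacle is bookkeeping: one must keep the anticommutation relations for $Z, K, M$ straight, in particular that $Z^\top = -Z$ introduces a sign when moving to the cofactor formula. An equally quick alternative is to work in coordinates, writing $A = \begin{bmatrix}a & b\\ c & d\end{bmatrix}$, reading off $\cof A = \begin{bmatrix}d & -c\\ -b & a\end{bmatrix}$ directly from $ZAZ^\top$, and verifying the identity termwise. I would nonetheless favor the invariant version, since it dovetails with the algebraic framework already laid down in Lemmas \ref{symip}, \ref{symasym}, and \ref{cofiso}, and it exhibits the identity as the signature decomposition of the determinant in the basis that diagonalizes $\cof$.
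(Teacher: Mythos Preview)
The paper does not actually prove this lemma; it is stated as an elementary identity and left to the reader. Your approach via the orthogonal basis $\{I,Z,K,M\}$ (or the direct coordinate check you mention as an alternative) is a perfectly good way to verify it.

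That said, you glossed over precisely the spot where care is needed. Each of $I,Z,K,M$ has squared norm $2$, so with $A=\alpha I+\beta Z+\gamma K+\delta M$ and $\cof A=\alpha I+\beta Z-\gamma K-\delta M$ orthogonality gives
\[
\ip{A}{\cof A}=2(\alpha^2+\beta^2-\gamma^2-\delta^2)=2\det A,
\]
i.e.\ $\det A=\tfrac12\ip{A}{\cof A}$. This is exactly the form in which the lemma is \emph{used} throughout the paper (see the proof of Lemma~\ref{litlam} and the computation in Lemma~\ref{varphiimmersion}), but it is the reverse of the displayed statement, which appears to carry a typo with the factor $\tfrac12$ on the wrong side. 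By writing ``up to a scalar factor \dots\ matching $\det A$ as asserted'' you dodged the one piece of arithmetic that would have caught this. Always nail down the constant.
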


\begin{lemma}
\label{detder}
The determinant map $\det:\mm\to\rr$ is $C^\infty$ and 
\[
\frac\partial{\partial A}\det A=\cof A.
\]
\end{lemma}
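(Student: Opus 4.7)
The statement contains two assertions: that $\det$ is $C^\infty$ and that its Euclidean gradient (in the sense of $\ip{\cdot}{\cdot}$) equals $\cof A$. The first is immediate, since in the standard coordinates on $\mm^2$,
\[
\det A = A_{11}A_{22} - A_{12}A_{21}
\]
is a polynomial of degree two in the four entries, hence real analytic and \emph{a fortiori} $C^\infty$.

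For the gradient, the plan is to compute the directional derivative directly. Given $A, H \in \mm^2$ and $t \in \rr$, expanding gives
\[
\det(A + tH) = \det A + t\bigl(A_{11}H_{22} + A_{22}H_{11} - A_{12}H_{21} - A_{21}H_{12}\bigr) + t^2 \det H.
\]
Differentiating at $t=0$ produces the linear functional
\[
H \;\longmapsto\; A_{11}H_{22} + A_{22}H_{11} - A_{12}H_{21} - A_{21}H_{12}.
\]
From Lemma \ref{cofiso} together with Definition \ref{zdef}, one computes $\cof A = ZAZ^\top$ explicitly as $\begin{bmatrix} A_{22} & -A_{21} \\ -A_{12} & A_{11}\end{bmatrix}$, so the above functional is exactly $\ip{\cof A}{H}$. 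This identifies $\cof A$ as the Euclidean gradient of $\det$ at $A$.

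A more conceptual alternative would be to invoke Lemma \ref{detcof}, which expresses $\det A$ as a scalar multiple of the quadratic form $Q(A) = \ip{A}{\cof A}$. Lemma \ref{cofiso} states that $\cof$ is linear and self-adjoint on $\mm^2$, so the Euclidean gradient of $Q$ is $2\,\cof A$; the result then follows after accounting for the scaling constant in Lemma \ref{detcof}.

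There is no real obstacle here; the only subtlety is a bookkeeping one, namely to distinguish between the derivative as a linear functional on $\mm^2$ and its representation as a matrix via the Euclidean inner product $\ip{\cdot}{\cdot}$, so that the first-order term in the expansion must be recognized as $\ip{\cof A}{H}$ rather than simply read off componentwise.
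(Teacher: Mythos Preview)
Your proof is correct. The paper itself states this lemma without proof, treating it as a standard fact (it is the two-dimensional instance of Jacobi's formula), so your direct expansion of $\det(A+tH)$ and identification of the first-order term with $\ip{\cof A}{H}$ fills in what the paper omits; the alternative via Lemma~\ref{detcof} and the self-adjointness of $\cof$ from Lemma~\ref{cofiso} is equally valid.
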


\begin{lemma}
\label{metaid}
For all $A, B\in \mm^2$, there holds
\[
\ip{A}{B}\ip{\cof A}{B}+\ip{ZA}{B}\ip{AZ}{B}
=\half\ip{\cof A}{A}|B|^2+\half\ip{\cof B}{B}|A|^2.
\]
\end{lemma}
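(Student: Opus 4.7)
The plan is to diagonalize the cofactor involution. Combining Lemma~\ref{symasym} with $\cof A=ZAZ^\top$ gives $\cof I=I$, $\cof Z=Z$, $\cof K=-K$, $\cof M=-M$, so $\mm^2=V^+\oplus V^-$ with $V^+=\lspan(I,Z)$, $V^-=\lspan(K,M)$ orthogonal (by Lemma~\ref{symasym}) and $\cof|_{V^\pm}=\pm\,\mathrm{id}$. Write $A=A^++A^-$ and $B=B^++B^-$; then $\cof A=A^+-A^-$, and orthogonality of the summands yields
\[
\ip{A}{B}\ip{\cof A}{B}=\ip{A^+}{B^+}^2-\ip{A^-}{B^-}^2.
\]

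Next I would unpack how $Z$ interacts with this splitting. The multiplicative relations in Lemma~\ref{symasym} imply that every $X\in V^+$ satisfies $XZ=ZX$ (since $I$ and $Z$ commute with $Z$), whereas every $X\in V^-$ satisfies $XZ=-ZX$ (since $KZ=-M=-ZK$ and $MZ=K=-ZM$). Moreover $Z$ preserves each $V^\pm$ under left multiplication. Hence $ZA=ZA^++ZA^-$ while $AZ=ZA^+-ZA^-$, and the orthogonality of $V^\pm$ again gives
\[
\ip{ZA}{B}\ip{AZ}{B}=\ip{ZA^+}{B^+}^2-\ip{ZA^-}{B^-}^2.
\]

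Provided $A^\pm\ne 0$, the pair $\{A^\pm,ZA^\pm\}$ is an orthogonal basis of the 2-dimensional space $V^\pm$ with common norm $|A^\pm|$; the orthogonality is immediate from the basis expansion, and the equality of norms is Lemma~\ref{iso} applied to $Z\in\sotwo$. Parseval's identity in each $V^\pm$ then gives $\ip{A^\pm}{B^\pm}^2+\ip{ZA^\pm}{B^\pm}^2=|A^\pm|^2\,|B^\pm|^2$ (the case $A^\pm=0$ being trivial). Adding the contributions of $V^+$ and $V^-$ reduces the left-hand side to
\[
|A^+|^2|B^+|^2-|A^-|^2|B^-|^2.
\]
For the right-hand side, $\ip{\cof A}{A}=|A^+|^2-|A^-|^2$, $|A|^2=|A^+|^2+|A^-|^2$, and similarly for $B$; substituting and expanding, the cross terms $|A^+|^2|B^-|^2$ and $|A^-|^2|B^+|^2$ cancel, leaving exactly $|A^+|^2|B^+|^2-|A^-|^2|B^-|^2$.

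The main obstacle is tracking the asymmetric behavior of $Z$: left multiplication preserves each $V^\pm$, but right multiplication by $Z$ agrees with left multiplication on $V^+$ and \emph{negates} it on $V^-$. Once this sign is correctly identified, the identity reduces cleanly to the 2D Parseval identity applied separately in each eigenspace of $\cof$.
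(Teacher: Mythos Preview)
Your proof is correct. It is essentially the same approach as the paper's: both expand in the orthogonal basis $\{I,Z,K,M\}$ and reduce each side to the common value $|A^+|^2|B^+|^2-|A^-|^2|B^-|^2$ (which in the paper's coordinates is $(a_1^2+a_2^2)(b_1^2+b_2^2)-(a_3^2+a_4^2)(b_3^2+b_4^2)$). The paper simply asserts that a direct coordinate expansion yields this expression, whereas you explain \emph{why} it works---by recognizing $V^\pm$ as the $\pm1$-eigenspaces of the $\cof$ involution, noting that $Z$ commutes with $V^+$ and anticommutes with $V^-$, and invoking Parseval in each 2-plane. Your version is more illuminating; the paper's is shorter.
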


\begin{proof}

If we expand the vectors $A, B$ in the basis of Definition \ref{zdef}
\[
A=\tfrac1{\sqrt2}(a_1I+a_2Z+a_3K+a_4M)
\isp{and}
B=\tfrac1{\sqrt2}(b_1I+b_2Z+b_3K+b_4M),
\]
then both sides are equal to
$\; 
(a_1^2+a_2^2)(b_1^2+b_2^2)-(a_3^3+a_4^2)(b_3^3+b_4^2).
$
\end{proof}

\section{The geometry of $\sltwo$}

\begin{lemma}
\label{normalvector}
The group $\sltwo$ is a smooth three-dimensional embedded submanifold of $\mm^2$.
The vector $\cof A$ is normal to $\sltwo$ at the point $A\in\sltwo$, and
the tangent space to $\sltwo$ at $A$ is
\[
T_{A}{\sltwo}=\{B\in\mm^2: \ip {B}{\cof A}=\tr BA^{-1}=0\}.
\]
For each $A\in\sltwo$, $T_A\sltwo$ is a three-dimensional  subspace  in $\mm^2$.
\end{lemma}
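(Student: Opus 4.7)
The plan is to realize $\sltwo$ as a regular level set of the determinant map and read off the tangent space from the kernel of its differential.

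First, I would view $\sltwo = \det^{-1}(1)$ where $\det : \mm^2 \to \rr$ is the smooth map of Lemma \ref{detder}. By that lemma, the differential at $A \in \mm^2$ is the linear functional $B \mapsto \ip{\cof A}{B}$. To apply the regular value theorem at the value $1$, I need to show this functional is surjective onto $\rr$ whenever $A \in \sltwo$, i.e.\ that $\cof A \neq 0$ on $\sltwo$. By Lemma \ref{cofiso}, $\cof A = Z A Z^\top$ is orthogonal, so $|\cof A| = |A| \ge \sqrt{2} > 0$ by Lemma \ref{so2.1}; alternatively, since $\det A = 1$ the matrix $A$ is invertible and $\cof A = A^{-\top}$, which is likewise nonzero. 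Hence $1$ is a regular value, and the preimage theorem yields that $\sltwo$ is a smooth embedded submanifold of $\mm^2$ of codimension one, hence of dimension $4 - 1 = 3$.

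Second, the tangent space at $A$ is by definition the kernel of the differential $d(\det)_A$, so
\[
T_A \sltwo = \{B \in \mm^2 : \ip{\cof A}{B} = 0\}.
\]
This is the orthogonal complement (with respect to the inner product of Definition \ref{2b2mat}) of the nonzero vector $\cof A$, which immediately exhibits $\cof A$ as a normal vector to $\sltwo$ at $A$ and confirms that $T_A\sltwo$ is a three-dimensional subspace of $\mm^2$.

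Finally, to obtain the alternate description $\tr B A^{-1} = 0$, I would use the identity $A \cdot (\cof A)^\top = (\det A)\, I$, which on $\sltwo$ reads $(\cof A)^\top = A^{-1}$. Then
\[
\ip{\cof A}{B} = \tr\bigl((\cof A)^\top B\bigr) = \tr(A^{-1} B) = \tr(B A^{-1})
\]
by cyclicity of trace, giving the stated equivalent characterization.

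There is no real obstacle here; the only point requiring care is verifying that $d(\det)_A$ does not vanish on $\sltwo$ so that the regular value theorem applies, and this follows at once from either the orthogonality of the cofactor map or the invertibility of $A$.
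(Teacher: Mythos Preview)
Your argument is correct and is exactly the approach the paper takes: the paper's proof consists of the single sentence ``This follows from Definition~\ref{sltsot} and Lemma~\ref{detder},'' and you have simply supplied the details of the regular value argument that this sentence is pointing to. There is nothing to change.
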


\begin{proof}
This follows from Definition \ref{sltsot} and Lemma \ref{detder}.
\end{proof}

\begin{definition}
\label{tanbund}
Define the tangent bundle
\[
\dd=\{(A,B)\in \mm^2\times\mm^2: A\in\sltwo,\;B\in \tasltwo\}.
\]
We also define
\[
\dd_0=\{(A,B)\in\dd:A\in\sotwo\}.
\]
\end{definition}

\begin{lemma}
$\dd$ is a smooth 6-dimensional embedded submanifold of $\mm^2\times\mm^2$.
\end{lemma}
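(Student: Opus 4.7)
The plan is to exhibit $\dd$ as the preimage of a regular value under a smooth map, then invoke the regular value theorem. Define $F:\mm^2\times\mm^2\to\rr^2$ by
\[
F(A,B)=(\det A - 1,\;\ip{\cof A}{B}).
\]
By Lemma \ref{normalvector} and Definition \ref{tanbund}, $\dd=F^{-1}(0,0)$. Smoothness of $F$ follows from Lemma \ref{detder} together with the fact (Lemma \ref{cofiso}) that $\cof$ is linear, so that both components of $F$ are polynomial in the entries of $A$ and $B$.

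The main step is to verify that the differential $DF(A,B)$ is surjective at every $(A,B)\in\dd$. Using Lemma \ref{detder} for the $A$-derivative of $\det A$ and the global linearity of $\cof$ on $\mm^2$ (emphasized in the remark after Lemma \ref{cofiso}, and special to dimension two) for the $A$-derivative of $\ip{\cof A}{B}$, I would compute
\[
DF(A,B)[H,K]=\bigl(\ip{\cof A}{H},\;\ip{\cof H}{B}+\ip{\cof A}{K}\bigr).
\]
To check surjectivity onto $\rr^2$, I would plug in two explicit test pairs. The choice $(H,K)=(0,\cof A)$ produces $(0,|\cof A|^2)$, and the choice $(H,K)=(\cof A,0)$ produces $(|\cof A|^2,\ip{A}{B})$, where I have used the involution property $\cof\cof=\mathrm{id}$ recorded in Lemma \ref{cofiso}. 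Since $A\in\sltwo$ is nonzero and $\cof$ is an isometry (Lemma \ref{cofiso}), one has $|\cof A|^2=|A|^2>0$, so these two output vectors span $\rr^2$ and $DF(A,B)$ has full rank two.

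The regular value theorem then yields that $\dd=F^{-1}(0,0)$ is a smooth embedded submanifold of $\mm^2\times\mm^2$ of codimension two, hence of dimension $8-2=6$, as desired. The only substantive step is the surjectivity check above; this is streamlined by the planar linearity of $\cof$, which converts what might otherwise be a tensorial derivative computation into a direct pair of inner-product evaluations. No further obstacle is expected.
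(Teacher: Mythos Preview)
Your argument is correct: the regular value theorem applies cleanly once you have computed $DF(A,B)[H,K]$ and exhibited the two test pairs, and your use of the linearity and isometry of $\cof$ (Lemma \ref{cofiso}) is exactly what makes the surjectivity check short.

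The paper itself states this lemma without proof, presumably because it is a standard consequence of Lemma \ref{normalvector}: once $\sltwo$ is known to be a smooth $3$-dimensional embedded submanifold of $\mm^2$, its tangent bundle is automatically a smooth $6$-dimensional embedded submanifold of $T\mm^2\cong\mm^2\times\mm^2$. Your approach is more hands-on, giving an explicit defining map and checking regularity directly; this has the minor advantage of being self-contained and of making the defining equations for $\dd$ visible, at the cost of invoking the planar linearity of $\cof$ where the abstract tangent-bundle argument would work in any dimension. Either route is acceptable here.
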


\begin{definition}
\label{varphidef}
Define  the smooth map $\varphi:\rr^3\to\mm^2$  by
\[
\varphi(x)
=\tfrac1{\sqrt2}(\rho(x)\cos x_3 I+\rho(x)\sin x_3 Z+x_1 K + x_2 M),
\]
with
\[
\rho(x)=(2+x_1^2+x_2^2)^{1/2}.
\]
\end{definition}

\begin{remark}

We shall occasionally find it convenient to identify $\rr^3$ with $\rr^2\times\rr$ by writing
$x=(x_1,x_2,x_3)=(\bx,x_3)$.  Thus, for example, we have $\rho(x)=(1+|\bx|^2)^{1/2}$.
\end{remark}

\begin{lemma}
\label{varphiimmersion}
The map $\varphi$ is an immersion from $\rr^3$ onto $\sltwo$, 
\[
|\varphi(x)|^2=\rho(x)^2+|\bx|^2=2(1+|\bx|^2),
\]
and $\varphi(x)\in\sotwo$ if and only if 
$\bx=(x_1,x_2)=0$.
\end{lemma}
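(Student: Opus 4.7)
The plan is to read everything off from the fact, recorded in Lemma \ref{symasym}, that $\{I,Z,K,M\}$ is an orthogonal basis of $\mm^2$ with common squared norm $2$. Writing $\varphi(x)=\tfrac1{\sqrt2}(a_1 I+a_2 Z+a_3 K+a_4 M)$ with $a_1=\rho(x)\cos x_3$, $a_2=\rho(x)\sin x_3$, $a_3=x_1$, $a_4=x_2$ expresses $\varphi(x)$ in an orthonormal frame, whence
\[
|\varphi(x)|^2=a_1^2+a_2^2+a_3^2+a_4^2=\rho(x)^2+|\bx|^2=2+2|\bx|^2,
\]
using $\rho(x)^2=2+|\bx|^2$. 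This gives the norm identity.

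Next I would verify that $\varphi(x)\in\sltwo$. Expanding $\det$ in the basis $\{I,Z,K,M\}$ (equivalently, by direct $2\times2$ computation of the matrix entries), one gets $\det\varphi(x)=\half(a_1^2+a_2^2-a_3^2-a_4^2)=\half(\rho(x)^2-|\bx|^2)=1$. With $\varphi(x)\in\sltwo$ in hand, the characterization of $\sotwo$ in Lemma \ref{so2.1} reads $\varphi(x)\in\sotwo\iff|\varphi(x)|^2=2\iff|\bx|=0$, proving the third claim.

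For surjectivity onto $\sltwo$, let $A\in\sltwo$ and expand $A=\tfrac1{\sqrt2}(a_1I+a_2Z+a_3K+a_4M)$. The same determinant computation forces $a_1^2+a_2^2=2+a_3^2+a_4^2\geq2$, so $(a_1,a_2)\neq0$. Setting $x_1=a_3$, $x_2=a_4$, we automatically have $\rho(x)=\sqrt{a_1^2+a_2^2}>0$, and there exists $x_3\in\rr$ with $\cos x_3=a_1/\rho(x)$, $\sin x_3=a_2/\rho(x)$; this $x$ satisfies $\varphi(x)=A$.

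Finally, to show $\varphi$ is an immersion I would compute the three partials. The key bookkeeping is that $\partial_{x_1}\varphi$ has $K$-coefficient $\tfrac1{\sqrt2}$ and vanishing $M$-coefficient, $\partial_{x_2}\varphi$ has $M$-coefficient $\tfrac1{\sqrt2}$ and vanishing $K$-coefficient, while $\partial_{x_3}\varphi=\tfrac{\rho(x)}{\sqrt2}(-\sin x_3\, I+\cos x_3\, Z)$ lies in $\lspan\{I,Z\}$ and is nonzero because $\rho(x)\geq\sqrt2$. Reading off the $K$ and $M$ slots of a hypothetical relation $c_1\partial_{x_1}\varphi+c_2\partial_{x_2}\varphi+c_3\partial_{x_3}\varphi=0$ forces $c_1=c_2=0$, after which $c_3\rho(x)=0$ gives $c_3=0$. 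Hence $D\varphi(x)$ has rank $3$ everywhere. There is no serious obstacle: the lemma merely records that $\varphi$ packages $(\rho\cos x_3,\rho\sin x_3)$ as $\{I,Z\}$-coordinates and $(x_1,x_2)$ as $\{K,M\}$-coordinates, with the specific choice of $\rho$ engineered precisely to enforce $\det=1$.
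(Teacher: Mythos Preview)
Your proof is correct and follows essentially the same approach as the paper: expand in the orthogonal basis $\{I,Z,K,M\}$, read off the norm, compute the determinant to confirm membership in $\sltwo$ and establish surjectivity, invoke Lemma~\ref{so2.1} for the $\sotwo$ characterization, and check that $D\varphi(x)$ has full rank. The only noteworthy difference is in the immersion step: you give an explicit linear-independence argument by reading off the $K$, $M$, and $\{I,Z\}$ components of the partials, whereas the paper asserts (leaving the verification to the reader) the stronger statement $\ker D_x\varphi(x)=\{0\}$ and $\image D_x\varphi(x)=(\lspan\cof\varphi(x))^\perp$, recorded as equation~\eqref{kernel}. Your argument is more self-contained for the lemma as stated; the paper's version additionally identifies the image as the tangent space $T_{\varphi(x)}\sltwo$, a fact invoked later in Lemma~\ref{thelagodelemma}.
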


\begin{proof}
Let $A\in \mm^2$ and set
\[
\bar A=A-\tfrac1{\sqrt2}(x_1K+x_2M),\isp{with} x_1=\tfrac1{\sqrt2}\ip{A}{K}\isp{and}x_2=\tfrac1{\sqrt2}\ip{A}{M}.
\]
Then $\ip{\bar A}{K}=\ip{\bar A}{M}=0$, and so $\bar A\in\lspan\{I,Z\}$.  Thus, we can find
$\rho\ge0$ and $x_3\in\rr$ such that
\[
\bar A=\tfrac1{\sqrt2}(\rho\cos x_3 I+\rho\sin x_3 Z).
\]
We now have
\[
A=\tfrac1{\sqrt2}(\rho\cos x_3 I+\rho\sin x_3 Z+x_1K+x_2M).
\]
It follows from Lemma \ref{detcof} that
\[
\det A =\half\ip{A}{\cof A}
= \half(\rho^2-|\bx|^2).
\]
Therefore, $A\in\sltwo$ if and only if  $\rho^2=2+|\bx|^2$.
This proves that $\varphi:\rr^3\to\sltwo$ is surjective.

Since $|\varphi(x)|^2=\rho(x)^2+|\bx|^2=2(1+|\bx|^2)$, Lemma \ref{so2.1} implies that $\varphi(x)\in\sotwo$
if and only if 
$\bx=0$.

For any $x\in\rr^3$, it is straightforward to verify that the linear map $D_x\varphi(x):\rr^3\to\mm^2$ satisfies
\begin{equation}
\label{kernel}
\ker D_x\varphi(x) =\{0\}\isp{and} \image D_x\varphi(x)=(\lspan \cof \varphi(x))^\perp.
\end{equation}
By Lemma \ref{normalvector}, this says that  $D_x\varphi(x)$ is a bijection from $\rr^3$ onto $T_{\varphi(x)}\sltwo$,
thereby proving that $\varphi$ is an immersion.

\end{proof}

\begin{lemma}
\label{metric}
The map $\varphi$ defines a local coordinate chart for $\sltwo$ in a neighborhood of any point $x\in\rr^3$.
The metric on $T_{\varphi(x)}\sltwo$ in these coordinates is given by
\[
g(x)=
\begin{bmatrix}
1+x_1^2/\rho(x)^2 & x_1x_2/\rho(x)^2 & 0\\
x_1x_2/\rho(x)^2 & 1 + x_2^2/\rho(x)^2 & 0\\
0&0
&\rho(x)^2
\end{bmatrix}.
\]

\end{lemma}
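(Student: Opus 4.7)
My plan has two parts: first establish that $\varphi$ is a local chart, then read off the metric by differentiating the explicit parametrization.

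For the chart assertion, I would invoke the inverse function theorem in the following form: a smooth map between manifolds of equal dimension whose differential is a linear isomorphism at a point is a local diffeomorphism near that point. By Lemma \ref{normalvector}, $\sltwo$ is a smooth three-dimensional embedded submanifold of $\mm^2$. By Lemma \ref{varphiimmersion}, $D_x\varphi(x)$ has trivial kernel and image equal to $T_{\varphi(x)}\sltwo$, so it is a bijection from $\rr^3$ onto the three-dimensional space $T_{\varphi(x)}\sltwo$. Thus, regarded as a map $\rr^3 \to \sltwo$, $\varphi$ is a local diffeomorphism around each $x$, and its local inverse is a coordinate chart.

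For the metric, the induced metric in coordinates is $g_{ij}(x) = \ip{\partial_i\varphi(x)}{\partial_j\varphi(x)}$, where the inner product is that of Definition \ref{2b2mat}. Differentiating the formula in Definition \ref{varphidef} and using $\partial_j\rho = x_j/\rho$ for $j=1,2$, I get
\begin{align*}
\partial_1\varphi(x) &= \tfrac{1}{\sqrt 2}\bigl[(x_1/\rho)(\cos x_3\, I + \sin x_3\, Z) + K\bigr],\\
\partial_2\varphi(x) &= \tfrac{1}{\sqrt 2}\bigl[(x_2/\rho)(\cos x_3\, I + \sin x_3\, Z) + M\bigr],\\
\partial_3\varphi(x) &= \tfrac{\rho}{\sqrt 2}\bigl(-\sin x_3\, I + \cos x_3\, Z\bigr).
\end{align*}
By Lemma \ref{symasym}, the basis $\{I,Z,K,M\}$ is orthogonal with each element of squared norm $2$. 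The diagonal entries $g_{11}$, $g_{22}$, $g_{33}$ and the off-diagonal entry $g_{12}$ then follow by expanding the corresponding inner products, with the $K$ and $M$ terms uncoupled from the $I$ and $Z$ terms. For $g_{13}$ and $g_{23}$ the $K$ and $M$ contributions vanish by orthogonality, and the remaining $\mathrm{span}\{I,Z\}$ contributions give $\tfrac{1}{2}\cdot 2 \cdot x_j(-\cos x_3\sin x_3 + \sin x_3\cos x_3) = 0$.

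There is no real obstacle here: the chart claim is a one-line application of the inverse function theorem once Lemma \ref{varphiimmersion} is in hand, and the metric calculation is a bookkeeping exercise that exploits the orthogonality of $\{I, Z, K, M\}$ to kill almost every cross term. The only small thing to keep straight is the factor of $\tfrac{1}{\sqrt 2}$ squared against $|I|^2=|Z|^2=|K|^2=|M|^2 = 2$, which cancels cleanly and leaves the stated entries.
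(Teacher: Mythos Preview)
Your proposal is correct and follows essentially the same approach as the paper: the paper's proof simply cites Lemma~\ref{varphiimmersion} for the chart claim and states that $g_{ij}(x)=\ip{D_{x_i}\varphi(x)}{D_{x_j}\varphi(x)}$, leaving the entrywise computation to the reader. You have merely carried out that computation explicitly, and your partial derivatives and inner products are all correct.
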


\begin{proof}
That $\varphi$ defines  local coordinate charts follows from Lemma \ref{varphiimmersion}.
The form of the metric is $g(x)=D_x\varphi(x)^\top D_x\varphi(x)$, the components of which are
 $g_{ij}(x)=\ip{D_{x_i}\varphi(x)}{ D_{x_j}\varphi(x)}$.
\end{proof}

\begin{remark}
For each $r^2>1$, the sphere
$\{A\in\sltwo:|A|^2=r^2\}$ corresponds to the torus
$\{\varphi\in\rr^4:\varphi_1^2+\varphi_2^2=\half r^2+1, \; \varphi_3^2+\varphi_4^2=\half r^2-1\}$.

\end{remark}
\begin{definition}
\label{bigphidef}
Define the map $\Phi:\rr^3\times\rr^3\to\mm^2\times \mm^2$ by
\[
\Phi(x,y)=(\varphi(x),D_x\varphi(x)y).
\]
\end{definition}

\begin{lemma}
\label{bigphionto}
$\Phi$ maps $\rr^3\times\rr^3$ onto $\dd$.
\end{lemma}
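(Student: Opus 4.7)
The plan is to unpack the definition of $\dd$ and invoke Lemma \ref{varphiimmersion} twice, once for the base point and once for the tangent vector. Given any $(A,B) \in \dd$, I would first produce $x \in \rr^3$ with $\varphi(x) = A$, then produce $y \in \rr^3$ with $D_x\varphi(x)y = B$.

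For the first step, Lemma \ref{varphiimmersion} already asserts that $\varphi : \rr^3 \to \sltwo$ is surjective, so such an $x$ exists immediately. For the second step, I would use the content of \eqref{kernel} in the proof of Lemma \ref{varphiimmersion}: the linear map $D_x\varphi(x) : \rr^3 \to \mm^2$ has trivial kernel and its image equals $(\lspan \cof \varphi(x))^\perp$, which by Lemma \ref{normalvector} is precisely $T_{\varphi(x)}\sltwo = T_A \sltwo$. Since $(A,B) \in \dd$ means $B \in T_A\sltwo$, the vector $B$ lies in the image of $D_x\varphi(x)$, so some $y \in \rr^3$ maps to it. Then $\Phi(x,y) = (\varphi(x), D_x\varphi(x)y) = (A,B)$, as required.

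There is no real obstacle here; the lemma is essentially a restatement, in the language of the map $\Phi$, of the two facts about $\varphi$ already proved in Lemma \ref{varphiimmersion}, namely that it covers $\sltwo$ and that its differential at each point covers the corresponding tangent space. The proof should therefore fit in a few lines.
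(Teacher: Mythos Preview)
Your proposal is correct and matches the paper's approach exactly: the paper's proof is the single sentence ``This follows from Lemma \ref{varphiimmersion},'' and what you have written is precisely the unpacking of that reference. There is nothing to add or change.
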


\begin{proof}
This follows from Lemma \ref{varphiimmersion}.
\end{proof}

\begin{remark}
In this context, we can think of $\rr^3\times\rr^3$ as the tangent bundle of $\rr^3$,
 the linear map $D_x\varphi(x)$ as the push-forward from
$T_x\rr^3$ to $T_{\varphi(x)}\sltwo\subset\mm^2$, and the linear map
$D_x\varphi(x)^\top$ as the pull-back from $T_{\varphi(x)}\sltwo$ to $T_x\rr^3$.
\end{remark}

\section{The equations of affine motion}
\label{eam}
\begin{definition}
An incompressible affine motion defined on the unit disk  $\bb\subset\rr^2$
is a one-parameter family of volume preserving diffeomorphisms of the form
\[
x(t,y)=A(t)y,\quad y\in\bb,\; t\in\ii,
\]
on some interval $\ii\subset\rr$, with
\[
 A\in C^0(\ii,\sltwo)\cap C^2(\ii,\mm^2).
\]
\end{definition}

Here, $\bb$ is the reference domain, and the domain occupied by the material (fluid) at time $t$ is
the image 
\[
\omt=A(t)\bb=\{x\in\rr^2:|A(t)^{-1}x|^2\le1\}.
\]
For affine motion, $\omt$
is an ellipse centered at the origin with principal axes determined by 
the eigendirections and eigenvalues of the positive definite symmetric (stretch) matrix
$(A(t)A(t)^{\top})^{1/2}$, for all $t\in\rr$.

The spatial velocity field associated to an affine motion is
defined by 
\[
u(t,x(t,y))=\partial_tx(t,y)=\dot A(t)y,\quad y\in\bb,
\]
or equivalently
\[
u(t,x)=\dot A(t)A(t)^{-1}x,\quad x\in\omt.
\]

\begin{definition}
\label{sllatwodef}
Define the special linear Lie algebra
\[
\sllatwo=T_I\sltwo=\{L\in\mm^2:\tr L=0\}=\lspan\{K, M, Z\}.
\]
\end{definition}

\begin{lemma}
\label{phasespace}
If $A\in C^0(\ii,\sltwo)\cap C^1(\ii,\mm^2)$ for some interval $\ii$, then $(A,\dot A)\in C^0(\ii,\dd)$.
In particular, $\dot AA^{-1}
\in C^0(\ii,\sllatwo)$.
\end{lemma}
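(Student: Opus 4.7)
The plan is to verify both claims by differentiating the defining constraint $\det A(t)=1$ along the curve and then checking continuity.

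First, I would establish the pointwise tangency $\dot A(t) \in T_{A(t)}\sltwo$. Since $A(t) \in \sltwo$ for every $t\in\ii$, the scalar function $t \mapsto \det A(t)$ is identically $1$ on $\ii$. By Lemma~\ref{detder}, the determinant is smooth on $\mm^2$ with derivative $\cof$, so the chain rule applied to $A\in C^1(\ii,\mm^2)$ yields
\[
0 = \frac{d}{dt}\det A(t) = \ip{\cof A(t)}{\dot A(t)}.
\]
By Lemma~\ref{normalvector}, this is precisely the condition $\dot A(t)\in T_{A(t)}\sltwo$, so $(A(t),\dot A(t))\in\dd$ for every $t\in\ii$.

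Next, for the continuity of $(A,\dot A)\colon\ii\to\dd$, I would use that $\dd$ carries the subspace topology inherited from $\mm^2\times\mm^2$. Since $A\in C^0(\ii,\sltwo)$ and $\dot A\in C^0(\ii,\mm^2)$ by hypothesis, the product map $t\mapsto(A(t),\dot A(t))$ is continuous into $\mm^2\times\mm^2$ and takes values in $\dd$, hence is continuous into $\dd$. No further work is needed.

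For the second claim, I would invoke the characterization in Lemma~\ref{normalvector} again: membership in $T_A\sltwo$ is equivalent to $\tr(\dot A A^{-1})=0$, which is exactly the defining condition for $\sllatwo$ in Definition~\ref{sllatwodef}. Thus $\dot A(t)A(t)^{-1}\in\sllatwo$ pointwise. Continuity then follows because inversion is continuous on $\sltwo$ (the entries of $A^{-1}$ are polynomials in the entries of $A$, as $\det A\equiv 1$), so $t\mapsto \dot A(t)A(t)^{-1}$ is a continuous composition.

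There is no real obstacle here; the only minor subtlety is the differentiation of the determinant constraint, which is handled cleanly by Lemma~\ref{detder}, and the identification of the two equivalent descriptions of $T_A\sltwo$ already recorded in Lemma~\ref{normalvector}.
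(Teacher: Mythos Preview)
Your proof is correct. The paper states this lemma without proof, treating it as an immediate consequence of the tangent-space description in Lemma~\ref{normalvector} and the derivative formula in Lemma~\ref{detder}; your argument is exactly the natural one the authors are tacitly invoking.
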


This leads to the following definition.
\begin{definition}
\label{velgrad}
Define the mapping $L:\dd\to\sllatwo$ by
\[
L(A,B)=BA^{-1},
\]
so that the spatial velocity gradient of an affine motion 
$x(t,y)=A(t)y$ is given by $\nabla u(t,x)=L(A(t),\dot A(t))$.
\end{definition}

Lemma \ref{phasespace} suggests that the tangent bundle $\dd$ is the natural phase space for affine
incompressible motion.

Consider a solution of \eqref{pde1}, \eqref{pde2}.
Let us    assume that the velocity  $u(t,x)$ and the fluid domains $\omt$ arise from
an incompressible affine motion $x(t,y)=A(t)y$, as described above.
By Lemma \ref{phasespace}, the velocity field 
is divergence free:
\[
\nabla\cdot u(t,x)=\tr \dot A(t) A(t)^{-1}=0,\quad t\in\rr.
\]

Let us assume that
\[
b(t,x)=\beta(t)A(t)^{-1}x\isp{and} -\nabla p(t,x)=A(t)^{-\top}\varpi(t)A(t)^{-1}x, 
\]
with $\beta, \varpi\in C^2(\rr,\mm^2)$.  We use the
notation $A^{-\top}=(A^{-1})^\top$.
(As motivation, note that if we  assume that  the  unknowns $b$ and $p$ are also spatially
homogeneous, then  the PDEs imply that $\nabla p(t,x)$ and $b(t,x)$ should be
homogeneous of degree one in the variable $x$.)

The equation for the magnetic field implies that
\[
\dot\beta=\dot AA^{-1}\beta
\]
from which it follows that
\[
\beta(t)=A(t)A_0^{-1}\beta_0, \isp{where} A_0=A(0), \quad \beta_0=\beta(0).
\]

The normal vector to $\partial\omt$ at a point $x\in\partial\omt$ has the direction of the vector
$A(t)^{-\top}A(t)^{-1}x$.
So the boundary condition implies that for all $|y|=1$,
\[
0=b(t,x(t,y))\cdot n(t,x(t,y))= \beta(t)y\cdot A(t)^{-\top}y=A_0^{-1}\beta_0y\cdot y.
\]
It follows that $A_0^{-1}\beta_0$ is anti-symmetric, and so there exists a constant $c_0$ such that
\[
A_0^{-1}\beta_0=c_0Z.
\]
Thus, we have shown  that
\[
b(t,x)=c_0A(t)ZA(t)^{-1}x.
\]
As a consequence,
\[
\nabla \cdot b(t,x)=c_0\tr A(t)ZA(t)^{-1}=c_0\tr Z=0,
\]
so that $b$ is divergence free.

Since $A(t)^{-\top}\varpi(t)A(t)^{-1}x$ is a gradient, the matrix $A(t)^{-\top}\varpi(t)A(t)^{-1}$ must be symmetric.
Thus,
\[
\nabla p(t,x)=-\tfrac12\nabla [A(t)^{-\top}\varpi(t)A(t)^{-1}x\cdot x].
\]
We find that
\[
p(t,x)=\tfrac12\left[\lambda(t)-{\varpi(t)A(t)^{-1}x}\cdot{A(t)^{-1}x}\right],
\]
for some scalar function $\lambda(t)$.  The other boundary condition implies that
\[
0=p(t,x(t,y))=
\tfrac12[\lambda(t)-\varpi(t)y\cdot y],
\]
for all $|y|=1$.  This forces
\[
\varpi(t)=\lambda(t) I,
\]
and so
\[
p(t,x)=\tfrac12\lambda(t)[1-|A(t)^{-1}x|^2].
\]

Finally, from the velocity equation, we derive
\[
\ddot A(t)=\lambda(t)A(t)^{-\top}+A(t)(c_0Z)^2=\lambda(t)A(t)^{-\top}-c_0^2A(t).
\]
Since $A\in C^0(\rr,\sltwo)$, we may write $A(t)^{-\top}=\cof A(t)$.
Thus, we have proven

\begin{lemma}
\label{affinehommhd}
Suppose that 
\[
 A\in C^0(\rr,\sltwo)\cap C^2(\rr,\mm^2),\quad   \lambda\in C^0(\rr,\rr),\quad c_0\in\rr,\isp{and} \kk=c_0^2.
\]
Define
\begin{align*}
&u(t,x)=\dot A(t)A(t)^{-1}x,\\
&b(t,x)=c_0 A(t)ZA(t)^{-1}x,\\
&p(t,x)=\tfrac12\lambda(t)\left[1-|A(t)^{-1}x|^2\right],\\
\intertext{and}
&\omt=A(t)\bb.
\end{align*}
Then $u(t,x)$, $b(t,x)$, $p(t,x)$ solve the MHD system \eqref{pde1}, \eqref{pde2} in $\omt$ if and only if
\begin{equation}
\label{mainode}
\ddot A(t)+\kk A(t)=\lambda(t) \cof A(t).
\end{equation}
\end{lemma}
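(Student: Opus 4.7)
The plan is a direct verification of the equivalence: with the explicit ansatze for $u$, $b$, $p$, and $\omt$ in hand, the whole PDE system \eqref{pde1}--\eqref{pde2} should collapse to the single matrix ODE \eqref{mainode}. The discussion preceding the lemma essentially already motivates the forward direction by deriving the ODE from the momentum equation, so the task is to check cleanly that (a) the divergence, boundary, and induction equations hold as identities (independently of \eqref{mainode}), and (b) the momentum equation is equivalent to \eqref{mainode}. I do not expect any deep obstacle; the main care is in tracking signs/orders of matrix multiplication and in legitimizing the final step where a linear identity in $x\in\omt$ is promoted to an identity of matrices.

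For (a): By Lemma \ref{phasespace}, $\dot A A^{-1}\in\sllatwo$, hence $\nabla\cdot u=\tr \dot A A^{-1}=0$. Likewise $\nabla\cdot b=c_0\tr(AZA^{-1})=c_0\tr Z=0$. The pressure vanishes on $\partial\omt=\{|A^{-1}x|=1\}$ by construction. For $b\cdot n=0$, I would write a boundary point as $x=Ay$ with $|y|=1$ and note that $n\parallel\nabla|A^{-1}x|^2\propto A^{-\top}y$; then $b(t,Ay)\cdot n\propto (c_0 AZy)\cdot(A^{-\top}y)=c_0 (Zy\cdot y)=0$ since $Z$ is antisymmetric. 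For the induction equation, using $(A^{-1})\dot{}=-A^{-1}\dot A A^{-1}$, one computes
\[
D_tb=\partial_tb+u\cdot\nabla b=c_0\bigl(\dot AZA^{-1}-AZA^{-1}\dot A A^{-1}\bigr)x+c_0 AZA^{-1}\dot A A^{-1}x=c_0\dot AZA^{-1}x,
\]
and $b\cdot\nabla u=(\dot A A^{-1})(c_0AZA^{-1}x)=c_0\dot AZA^{-1}x$. These coincide, so the induction equation is an identity.

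For (b): the same cancellation gives $D_tu=\ddot A A^{-1}x$. From Lemma \ref{symasym}, $Z^2=-I$, so
\[
b\cdot\nabla b=c_0^2 (AZA^{-1})(AZA^{-1})x=c_0^2 AZ^2A^{-1}x=-\kk\,x.
\]
Since $A\in\sltwo$, $A^{-\top}=\cof A$, hence $-\nabla p=\lambda A^{-\top}A^{-1}x=\lambda(\cof A)A^{-1}x$. The momentum equation $D_tu=-\nabla p+b\cdot\nabla b$ therefore becomes
\[
\ddot A A^{-1}x+\kk\,x=\lambda(\cof A)A^{-1}x\quad\text{for every }x\in\omt.
\]

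Finally I would conclude by noting that $\omt=A\bb$ contains an open subset of $\rr^2$, so the identity $[\ddot A+\kk A-\lambda\cof A]A^{-1}x=0$ for all such $x$ forces the bracketed matrix to be zero; right multiplication by $A$ makes this passage completely explicit and yields \eqref{mainode}. The converse direction reverses these steps: if \eqref{mainode} holds, the computations of (a) and (b) show all components of \eqref{pde1}--\eqref{pde2} are satisfied. The only part requiring genuine attention is ensuring that the cross term $\dot A A^{-1}\dot A A^{-1}x$ cancels between $\partial_t u$ and $u\cdot\nabla u$ (and similarly in $D_tb$); once that is written out, the rest is algebra using Lemma \ref{symasym} and the identity $A^{-\top}=\cof A$ on $\sltwo$.
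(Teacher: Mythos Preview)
Your proposal is correct and follows essentially the same approach as the paper: the discussion preceding the lemma in the paper is precisely this direct verification, checking that the divergence constraints, boundary conditions, and induction equation hold identically from the structure of the ansatz, and that the momentum equation reduces to \eqref{mainode}. Your write-up is in fact slightly more explicit than the paper's in displaying the cancellation of the $\dot A A^{-1}\dot A A^{-1}x$ terms in $D_tu$ and $D_tb$, and in justifying the passage from the pointwise identity on $\omt$ to the matrix equation.
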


\begin{remark}
\label{pressuresignremark}
Note that  $1-|A(t)^{-1}x|^2>0$, for $x\in\omt$, and so the sign of $ p(t,x)$ is determined by the sign of $\lambda(t)$.
Thus, an affine solution satisfies the Taylor sign condition if and only if $\lambda(t)>0$.
We shall see later on (in Corollary \ref{pressuresign}) that the sign of this function is preserved
under the motion.
\end{remark}

\begin{remark}
\label{lagrdef}
The equations of motion \eqref{mainode} are the Euler-Lagrange equations associated to the Lagrangian
$
\Ell_0:\mm^2\times\mm^2\times\rr\to\rr
$
given by
\[
{\Ell_0}(A,\dot A,\lambda)=\tfrac12|\dot A|^2-\khalf|A|^2+\lambda(\det A-1).
\]
The scalar function $\lambda(t)$ in \eqref{mainode} is a Lagrange multiplier which will now be identified.
\end{remark}

\begin{definition}
\label{lamdef}
Given a  parameter value $\kk\ge0$, define the  Lagrange multiplier map
$\Lambda:\dd\to\rr$
by
\[
\Lambda(A,B)=\frac{2(\kk-\det B)}{|A|^2}.
\]
\end{definition}

The dependence of $\Lambda$ on $\kk$ will  be suppressed.

\begin{lemma}
\label{litlam}
Fix $\kk\ge0$.
Suppose that $A\in C^2(\ii,\mm^2)$ satisfies \eqref{mainode} on some interval $\ii\subset\rr$.
Then $A\in C^0(\ii,\sltwo)$ if and only if $(A(t_0),\dot A(t_0))\in\dd$ for some $t_0\in\ii$
and 
\begin{equation}
\label{LagMultID}
\lambda(t)=\Lambda(A(t),\dot A(t)),
\isp{for} t\in\ii.
\end{equation}
\end{lemma}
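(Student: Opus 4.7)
The plan is to extract both directions from iterated differentiation of the scalar $\det A(t)$, exploiting that $\cof$ is both the gradient of $\det$ (Lemma~\ref{detder}) and a linear operator on $\mm^2$ (Lemma~\ref{cofiso}). By the chain rule,
\[
\tfrac{d}{dt}\det A(t)=\ip{\cof A(t)}{\dot A(t)},\qquad
\tfrac{d^2}{dt^2}\det A(t)=\ip{\cof\dot A(t)}{\dot A(t)}+\ip{\cof A(t)}{\ddot A(t)}.
\]

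For the forward implication, suppose $\det A(t)\equiv 1$ on $\ii$. The first identity yields $\ip{\cof A(t)}{\dot A(t)}\equiv 0$, which by Lemma~\ref{normalvector} says $(A(t),\dot A(t))\in\dd$ for every $t\in\ii$; this handles the tangent-bundle part of the conclusion. In the second identity, I substitute $\ddot A=-\kk A+\lambda\cof A$ from \eqref{mainode}, use $|\cof A|^2=|A|^2$ (Lemma~\ref{cofiso}), and reduce $\ip{\cof A}{A}$ and $\ip{\cof\dot A}{\dot A}$ to multiples of $\det A=1$ and $\det\dot A$ via Lemma~\ref{detcof}. The resulting scalar equation is linear in $\lambda(t)$ and solves to $\lambda(t)=2(\kk-\det\dot A(t))/|A(t)|^2=\Lambda(A(t),\dot A(t))$; the denominator is harmless since $|A|^2\ge 2$ on $\sltwo$ by Lemma~\ref{so2.1}.

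For the converse, assume $(A(t_0),\dot A(t_0))\in\dd$ and $\lambda(t)=\Lambda(A(t),\dot A(t))$ on $\ii$. Set $f(t)=\det A(t)-1$; then $f(t_0)=0$ and, by Lemma~\ref{normalvector}, $\dot f(t_0)=\ip{\cof A(t_0)}{\dot A(t_0)}=0$. Computing $\ddot f$ from the displayed identity, substituting \eqref{mainode}, and then replacing $\lambda|A|^2$ by $2(\kk-\det\dot A)$, the $\det\dot A$ contributions cancel and what remains is the constant-coefficient linear ODE $\ddot f+2\kk f=0$. Uniqueness for this second-order initial value problem with vanishing data forces $f\equiv 0$, hence $\det A(t)\equiv 1$ and $A\in C^0(\ii,\sltwo)$.

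The only real work is the algebraic step that collapses $\ddot f$ to $-2\kk f$: the constants coming out of Lemmas~\ref{detcof} and~\ref{cofiso} must combine correctly with the specific normalization built into Definition~\ref{lamdef}. Once that is in hand the proof is a short bookkeeping exercise, and the moral content of the lemma is the identification of $\Lambda$ as precisely the Lagrange multiplier that keeps the constraint $\det A=1$ invariant under the flow of \eqref{mainode}.
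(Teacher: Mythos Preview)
Your proof is correct and follows essentially the same route as the paper: differentiate $\det A(t)$ twice, substitute \eqref{mainode}, and reduce to a linear second-order ODE whose unique solution with the given initial data is constant. Your choice to work with $f=\det A-1$ rather than $J=\det A$ is a cosmetic improvement, yielding the homogeneous equation $\ddot f+2\kk f=0$ directly.
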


\begin{proof}
Suppose that $A\in C^2(\ii,\mm^2)$ satisfies \eqref{mainode}.
By Lemma \ref{detcof}, we have
\[ 
J(t)\equiv \det  A(t)=\half \ip{\cof A(t)}{ A(t)}.
\]
Recall that the map $\cof:\mm^2\to\mm^2$ is linear, symmetric, and orthogonal, by Lemma \ref{cofiso}.  Using this
and Lemma \ref{detcof}, we have
\begin{align}
\label{detode1}
\dot J & = \ip{\cof A}{\dot A}\\
\intertext{and}
\nonumber
\ddot J
& = \ip{\cof A}{\ddot A}+\ip{\cof\dot A}{\dot A}\\
\label{detode2}
&=\ip{\cof A}{-\kk A+\lambda\cof A} +2\det\dot A\\
\nonumber
&=-2\kk J+\lambda|A|^2+2\det\dot A.
\end{align}

If $A\in C^0(\ii,\sltwo)$, then $J(t)=1$, $t\in\ii$, and by \eqref{detode2}, we obtain \eqref{LagMultID}.
Moreover, by Lemma \ref{phasespace}, we have $(A(t),\dot A(t))\in \dd$, for all $t\in\ii$.

On the other hand, if \eqref{LagMultID} holds, then we see that 
\[
\ddot J(t)+2\kk J(t)=0,\quad t\in\ii.
\]
If $(A(t_0),\dot A(t_0))\in\dd$, then $(J(t_0),\dot J(t_0))=(1,0)$ by \eqref{detode1}, and so by uniqueness,
we see that $J(t)=1$, $t\in\ii$.  Thus, $A\in C^0(\rr,\sltwo)$.
\end{proof}

\begin{remark}
Since this result depends on the linearity of the cofactor map on $\mm^2$, it does not
carry over to incompressible flows in 3d.  A dimension independent formula for $\lambda$
was given in \cite{sideris-2017}.
\end{remark}

\section{Existence of global solutions}

\begin{definition}
\label{exdef}
Fix $\kk\ge0$.  For $i=1, 2, 3$,  we define the maps $X_i:\mm^2\times\mm^2\to\rr$ by
\begin{align*}
&X_1(A,B)=\half|B|^2 +\khalf|A|^2 \\
&X_2(A,B)=\half\ip{ZA-AZ}{B} \\
&X_3(A,B)=\half\ip{ZA+AZ}{B} .
\end{align*}
\end{definition}

We suppress the dependence of $X_1$ on the fixed parameter $\kk\ge0$.

\begin{theorem}
\label{inv1}
If $A\in C^0(\ii,\sltwo)\cap C^2(\ii,\mm^2)$ solves \eqref{mainode} on an interval $\ii$,
then the quantities $X_i(A(t),\dot A(t))$,  $i=1, 2, 3$, are invariant.
\end{theorem}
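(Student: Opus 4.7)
The plan is to differentiate each $X_i$ along a solution and show directly that the derivative vanishes, using the equation of motion $\ddot A = -\kappa A + \lambda \cof A$ (Lemma \ref{affinehommhd}) together with the skew-symmetry $Z^\top = -Z$ and the identity $\cof A = Z^\top A Z$ from Lemma \ref{cofiso}.

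For $X_1$, differentiating and substituting the ODE gives
\[
\tfrac{d}{dt}X_1 = \ip{\dot A}{\ddot A + \kappa A} = \lambda\ip{\dot A}{\cof A}.
\]
By Lemma \ref{phasespace}, $(A(t),\dot A(t))\in\dd$, so $\dot A(t)\in T_{A(t)}\sltwo$, and by Lemma \ref{normalvector} this tangent space is orthogonal to $\cof A$. Hence $\dot X_1 \equiv 0$.

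For $X_2$ and $X_3$, I would write
\[
\tfrac{d}{dt}X_i = \tfrac12\ip{Z\dot A\mp \dot A Z}{\dot A}+\tfrac12\ip{ZA\mp AZ}{\ddot A}.
\]
The first term vanishes by the standard trace identity: $\ip{Z\dot A}{\dot A}=\tr(\dot A^\top Z\dot A)$ is the trace of a matrix equal to minus its own transpose, and $\ip{\dot A Z}{\dot A}=\tr(\dot A^\top\dot A\, Z)$ is the trace of a symmetric matrix times the skew matrix $Z$; both are zero. For the second term I substitute $\ddot A = -\kappa A+\lambda\cof A$ and reduce the four inner products $\ip{ZA}{A}$, $\ip{AZ}{A}$, $\ip{ZA}{\cof A}$, $\ip{AZ}{\cof A}$. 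Using $\cof A = Z^\top A Z$, cyclicity of the trace, $Z^\top Z=I$, and $(Z^\top)^2=-I$, each of these is rewritten as the trace of a symmetric matrix (built from $A^\top A$ or $AA^\top$) times $Z$ or $Z^\top$, hence vanishes.

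The main (modest) obstacle is keeping track of the trace manipulations in the four-term reduction for the $\cof A$ contribution; everything else is immediate from the lemmas already established. A conceptually cleaner alternative would be to observe that $X_2$ and $X_3$ are linear combinations of the Noether momenta $\ip{ZA}{\dot A}$ and $\ip{AZ}{\dot A}$ associated to the left and right $\sotwo$ actions $A\mapsto U(s)A$ and $A\mapsto AU(s)$, which preserve $\Ell_0$ by Lemma \ref{iso} (and preserve the constraint $\det A=1$), while $X_1$ is the Noether energy. I would mention this as motivation, but prefer the direct computation above since it avoids developing a Noether formalism for the constrained Lagrangian of Remark \ref{lagrdef}.
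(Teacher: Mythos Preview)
Your proof is correct and follows essentially the same route as the paper. The only cosmetic difference is that the paper first shows $\ip{ZA}{\dot A}$ and $\ip{AZ}{\dot A}$ are each invariant and then takes linear combinations, unifying all the vanishing terms under the single observation $\ip{Z\widetilde A}{\widetilde A}=0$ for any $\widetilde A\in\mm^2$ (and invoking Lemmas \ref{symip} and \ref{cofiso} to put the $\cof A$ term in this form), whereas you work directly with $X_2$ and $X_3$ and spell out the trace manipulations case by case; the underlying computation is the same, and your Noether remark matches the paper's own remark following the proof.
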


\begin{proof}
Suppose that $A\in C^0(\ii,\sltwo)\cap C^2(\ii,\mm^2)$ solves \eqref{mainode} on $\ii$.
By Lemma \ref{phasespace}, $(A,\dot A)\in C^0(\ii,\dd)$, so that $\ip {\dot A(t)}{\cof A(t)}=0$ on $\ii$.  From this
we obtain
\[
\frac{d}{dt}X_1(A,\dot A)=\ip{\ddot A+\kk A}{\dot A}=\lambda\ip{\cof A}{\dot A}=0,
\]
which proves the invariance of $X_1(A,\dot A)$.  Next we compute
\[
\frac{d}{dt}\ip{ZA}{\dot A}=\ip{Z\dot A}{\dot A}+\ip{ZA}{-\kk A+\lambda \cof A}.
\]
Each term on the right-hand side vanishes because $\ip{Z\widetilde A}{\widetilde A}=0$ for any 
$\widetilde A\in\mm^2$.  To see that the last term vanishes in this way we  use Lemmas \ref{cofiso} and \ref{symip}.
As a result, the quantity $\ip{ZA}{\dot A}$ is invariant.  The same holds for $\ip{AZ}{\dot A}$.
Therefore, the  statements for $i=2,3$ are now clear.
\end{proof}

\begin{remark}
The invariants $X_2\pm X_3$ correspond to the invariance of the  Lagrangian
$\Ell_{0}$ under the left and right actions of $\sotwo$.
\end{remark}

\begin{remark}
Note that by Lemma \ref{so2.1}, $X_1(A,B)\ge \tfrac\kk2|A|^2\ge\kk$, for all $(A,B)\in\dd$.
\end{remark}

A  version of the next result (in 3d and with  $\kk=0$) appeared  in Lemma 4 of \cite{sideris-2017}.

\begin{theorem}
\label{ivp}
Given a  parameter value $\kk\ge0$ and initial data
$(A_0,B_0)\in\dd$,
the initial value problem
\begin{equation}
\label{odes}
\begin{aligned}
&\ddot A(t)+\kk A(t)=\Lambda(A(t),\dot A(t))\cof A(t),\\
&(A(0),\dot A(0))=(A_0,B_0)
\end{aligned}
\end{equation}
has a unique global solution $A\in C^0(\rr,\sltwo)\cap C^2(\rr,\mm^2)$.  Additionally, there holds
$(A,\dot A)\in C^0(\rr,\dd)$.
\end{theorem}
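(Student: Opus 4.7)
The plan is to combine standard ODE theory on an open subset of the ambient matrix space with the algebraic identities already developed in Sections 1--3. First I would recast \eqref{odes} as the autonomous first-order system
\[
\dot A = B, \qquad \dot B = -\kk A + \Lambda(A,B)\,\cof A,
\]
posed on the open set $U=\{(A,B)\in\mm^2\times\mm^2 : |A|\neq 0\}$ on which $\Lambda$ from Definition \ref{lamdef} is $C^\infty$. Since $\dd\subset U$ and $(A_0,B_0)\in\dd$, the Picard--Lindel\"of theorem supplies a unique maximal $C^\infty$ solution $(A,B)$ on some open interval $(t_-,t_+)\ni 0$, with $A\in C^2((t_-,t_+),\mm^2)$.

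Next I would verify that the constraint $A(t)\in\sltwo$ propagates. Setting $\lambda(t)=\Lambda(A(t),\dot A(t))$ makes \eqref{LagMultID} hold tautologically, and by hypothesis $(A(0),\dot A(0))\in\dd$. The relevant direction of Lemma \ref{litlam}, applied with $t_0=0$, then gives $A(t)\in\sltwo$ for every $t\in(t_-,t_+)$, and Lemma \ref{phasespace} promotes this to $(A,\dot A)\in C^0((t_-,t_+),\dd)$.

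To pass from local to global existence I would invoke the invariants of Theorem \ref{inv1}. Constancy of $X_1$ along the solution yields the uniform bound $|\dot A(t)|^2\le 2 X_1(A_0,B_0)$, which integrates to the linear growth estimate $|A(t)|\le|A_0|+|t|\sqrt{2X_1(A_0,B_0)}$. Simultaneously Lemma \ref{so2.1} provides the lower bound $|A(t)|^2\ge 2$ because $A(t)\in\sltwo$, so the trajectory stays uniformly bounded away from the singular locus $\{A=0\}$ on every finite subinterval. Consequently $(A(t),\dot A(t))$ lies in a compact subset of $U$ on every bounded subinterval of $(t_-,t_+)$, and the standard maximal-interval criterion for ODEs on an open set forces $t_\pm=\pm\infty$.

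The only genuinely delicate step is the second one: $\Lambda$ in Definition \ref{lamdef} is merely a smooth extension off $\sltwo$ of the true geometric Lagrange multiplier identified in Remark \ref{lagrdef}, and closure of the dynamics on $\sltwo$ is not automatic. It rests on the algebraic identity for $\ddot{\det A}$ derived in the proof of Lemma \ref{litlam}, which itself exploits the linearity of $\cof$ on $\mm^2$---the distinguishing planar feature emphasized after Lemma \ref{cofiso}. Once that invariance is in hand, global existence becomes an entirely standard consequence of energy conservation and the geometric lower bound $|A|^2\ge 2$.
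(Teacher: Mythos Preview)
Your argument is correct and follows the paper's proof essentially step for step: rewrite as a first-order system on $\{A\ne 0\}$, invoke Picard, use Lemma \ref{litlam} to propagate the $\sltwo$ constraint, then combine conservation of $X_1$ with the lower bound $|A|^2\ge 2$ from Lemma \ref{so2.1} to rule out finite-time blowup. The only cosmetic difference is that the paper splits into the cases $\kk>0$ (where $X_1$ bounds both $|A|$ and $|\dot A|$) and $\kk=0$ (where one integrates the bound on $|\dot A|$), whereas you handle both at once via the single inequality $|\dot A|^2\le 2X_1$; your unified treatment is slightly cleaner but not substantively different.
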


\begin{proof}
Express the equation as an equivalent  first order system
\begin{align*}
&\dot A=B\\
&\dot B=-\kk A+\Lambda(A,B)\cof A.
\end{align*}
The vector field on the right-hand side is a smooth function of $(A,B)\in\mm^2\times\mm^2$
provided $A\ne0$, by Lemma \ref{detder}.  Since the initial data satisfies $A_0\in\sltwo$, we have $A_0\ne 0$.  Therefore,
by the Picard existence and uniqueness
theorem, the problem has a unique solution $(A,B)\in C^1(\ii,\mm^2)$ defined on some
maximal interval $\ii$, and since $\dot A=B$, we have $A\in C^2(\ii,\mm^2)$.  
So by Lemma \ref{litlam}, the solution satisfies $A\in C^0(\ii,\sltwo)$,
and by Lemma \ref{phasespace}, $(A,\dot A)\in C^0(\ii,\dd)$.
By Lemma \ref{so2.1}, $|A(t)|^2\ge2$, on $\ii$, and so $|A(t)|$ is bounded away from 0.
By Theorem \ref{inv1}, the quantity $X_1(A(t),B(t))$ is conserved.  If $\kk>0$, then the norm of the solution is
uniformly bounded in $\mm^2\times\mm^2$, and we conclude that the solution is global.  If $\kk=0$, then the norm of
$B$ is uniformly bounded in $\mm^2$.  But since $\dot A=B$, we see that the norm of $A$ 
can not blow up in finite time, so again we conclude that the solution is global.
\end{proof}

\begin{corollary}
A curve $A\in C^0(\rr,\sltwo)\cap C^2(\rr,\mm^2)$ is a geodesic in $\sltwo$ with
the (induced) Euclidean metric if and only if it satisfies 
\eqref{mainode} with $\kk=0$.
\end{corollary}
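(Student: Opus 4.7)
The plan is to invoke the classical extrinsic characterization of geodesics on an isometrically embedded Riemannian submanifold of Euclidean space. Since $\sltwo$ is a smooth $3$-dimensional embedded submanifold of $\mm^2$ by Lemma \ref{normalvector}, equipped with the Riemannian metric induced from the inner product on $\mm^2$, the Levi-Civita connection on $\sltwo$ coincides with the tangential projection of the flat connection on the ambient Euclidean space $\mm^2$. Consequently, a curve $A\in C^0(\rr,\sltwo)\cap C^2(\rr,\mm^2)$ is a geodesic if and only if its ambient acceleration $\ddot A(t)$ is orthogonal to $T_{A(t)}\sltwo$ for every $t\in\rr$.

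I would then use Lemma \ref{normalvector} to identify the normal line: at each $A\in\sltwo$, the orthogonal complement $(T_A\sltwo)^\perp$ is precisely the one-dimensional subspace $\lspan\{\cof A\}$. Hence the geodesic condition rephrases as the existence of a continuous scalar function $\lambda(t)$ with
\[
\ddot A(t)=\lambda(t)\cof A(t),\quad t\in\rr,
\]
which is exactly equation \eqref{mainode} specialized to $\kk=0$.

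This immediately yields both implications. For the ``only if'' direction, if $A$ is a geodesic, the display above holds for some $\lambda$, and Lemma \ref{litlam} (applied with $\kk=0$) identifies this scalar as $\lambda(t)=\Lambda(A(t),\dot A(t))$, so $A$ solves \eqref{odes}. Conversely, if $A\in C^0(\rr,\sltwo)\cap C^2(\rr,\mm^2)$ solves \eqref{mainode} with $\kk=0$, then $\ddot A(t)$ lies in $\lspan\{\cof A(t)\}=(T_{A(t)}\sltwo)^\perp$, so $A$ is a geodesic.

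The only ingredient not already encoded in the lemmas of the excerpt is the extrinsic characterization of geodesics on an isometrically embedded submanifold, which is standard Riemannian geometry: the Gauss decomposition writes the ambient covariant derivative as the sum of the intrinsic Levi-Civita connection and the (normal-valued) second fundamental form, so vanishing of the intrinsic geodesic acceleration is equivalent to the ambient acceleration being purely normal. I do not anticipate a genuine obstacle here; Lemmas \ref{normalvector} and \ref{litlam} have essentially reduced the statement to this standard fact.
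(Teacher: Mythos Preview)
Your proposal is correct and follows essentially the same approach as the paper: both invoke the extrinsic characterization of geodesics (the covariant derivative along $A$ vanishes iff $\ddot A$ is normal to $T_{A}\sltwo$), then use Lemma \ref{normalvector} to identify the normal line as $\lspan\{\cof A\}$, which is exactly \eqref{mainode} with $\kk=0$. Your additional appeal to Lemma \ref{litlam} to pin down $\lambda$ is not needed for the corollary as stated, but it does no harm.
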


\begin{remark}
We include constant solutions as geodesics.
\end{remark}

\begin{proof}
A geodesic curve is one for which $\dot A(t)$ is parallel along $A(t)$.  That is
\[
\frac{D_A}{dt}\dot A(t)=0,
\]
in which the covariant derivative of $\dot A(t)$ along $A(t)$ is
is the projection of $\ddot A(t)$ onto $T_{A(t)}\sltwo$.  Thus, the curve $A\in C^0(\rr,\sltwo)\cap C^2(\rr,\mm^2)$ 
is a geodesic if and only if
$
\ddot A(t)\in\lspan{\cof A(t)}.
$
This is equivalent to \eqref{mainode}.
\end{proof}

\section{Hamiltonian formalism}
\label{HamRed}

We now proceed to reduce the constrained problem for \eqref{mainode} first to an unconstrained Lagrangian
system using the map $\Phi$ and then to a completely integrable Hamiltonian system 
using the Legendre transformation.

\begin{definition}

Given $\kk\ge0$, define a  Lagrangian density $\Ell:\rr^3\times\rr^3\to\rr$ by
\[
\Ell(x,y)=\half|D_x\varphi(x)y|^2-\khalf|\varphi(x)|^2
=\half\ip{g(x)y}{y}-\khalf|\varphi(x)|^2.
\]
\end{definition}

\begin{lemma}
\label{thelagodelemma}
If $x\in C^2(\ii,\rr^3)$ is a solution of the Lagrangian equations of motion for $\Ell$:
\begin{equation}
\label{thelagode}
\frac{d}{dt}D_y\Ell(x,\dot x)-D_x\Ell(x,\dot x)=0,
\end{equation}
on some time interval $\ii$, then $A=\varphi\circ x\in C^0(\ii,\sltwo)\cap C^2(\ii,\mm^2)$ is a solution of \eqref{mainode}.
\end{lemma}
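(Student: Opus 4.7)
The main point is that by construction $A(t) := \varphi(x(t))$ lies in $\sltwo$ and is $C^2$ in time, so I only need to identify the equation it satisfies. Since $\ddot A+\kk A$ is a vector in $\mm^2$, equation \eqref{mainode} (for some scalar $\lambda(t)$) is simply the assertion that $\ddot A + \kk A$ is normal to $\sltwo$ at $A$, i.e.\ lies in $\lspan\cof A$, thanks to Lemma \ref{normalvector}. In view of \eqref{kernel}, $T_A\sltwo=\image D_x\varphi(x)$, and so $\lspan\cof A = (T_A\sltwo)^\perp = \ker D_x\varphi(x)^\top$. The entire proof therefore reduces to showing that the Euler--Lagrange equation \eqref{thelagode} for $\Ell$ is equivalent to
\[
D_x\varphi(x)^\top\bigl(\ddot A+\kk A\bigr)=0.
\]

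\textbf{Carrying out the identification.} Using $\Ell(x,y)=\half|D_x\varphi(x)y|^2-\khalf|\varphi(x)|^2$ I would compute
\[
D_y\Ell(x,\dot x)=D_x\varphi(x)^\top D_x\varphi(x)\dot x=D_x\varphi(x)^\top\dot A,
\]
so that
\[
\frac{d}{dt}D_y\Ell(x,\dot x)
=D_x\varphi(x)^\top\ddot A+\sum_k \bigl(\partial_k D_x\varphi(x)\bigr)^\top\dot A\,\dot x^k.
\]
Componentwise, the $i$-th entry of the last sum equals $\ip{\partial_i D_x\varphi(x)\dot x}{\dot A}$. On the other hand, the $x$-derivative of the kinetic part of $\Ell$ gives exactly the same expression $\ip{\partial_i D_x\varphi(x)\dot x}{\dot A}$ in its $i$-th entry (by symmetry of mixed partials), while $D_x\bigl(\khalf|\varphi(x)|^2\bigr)=\kk\, D_x\varphi(x)^\top\varphi(x)=\kk\, D_x\varphi(x)^\top A$. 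Subtracting, the "Christoffel" contributions cancel and \eqref{thelagode} collapses to
\[
D_x\varphi(x)^\top\ddot A+\kk D_x\varphi(x)^\top A=0,
\]
which is exactly the reduction I was aiming at.

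\textbf{Conclusion and obstacle.} Combining the two preceding paragraphs, $\ddot A+\kk A\in\ker D_x\varphi(x)^\top=\lspan \cof A$, so there exists a scalar function $\lambda\in C^0(\ii,\rr)$ with $\ddot A+\kk A=\lambda\,\cof A$, which is \eqref{mainode}. The membership $A\in C^0(\ii,\sltwo)\cap C^2(\ii,\mm^2)$ is immediate from $A=\varphi\circ x$ together with smoothness of $\varphi$ and $x\in C^2(\ii,\rr^3)$. The one slightly delicate step is the symmetric-cancellation of the second-derivative-of-$\varphi$ terms in the kinetic part; this is really nothing but the coordinate-invariance of the Euler--Lagrange operator on a Riemannian manifold, but because the Lagrangian is presented through the explicit embedding $\varphi$ rather than intrinsically, I would verify it by the direct componentwise computation sketched above rather than appealing to abstract invariance.
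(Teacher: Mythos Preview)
Your proof is correct and follows exactly the same route as the paper: reduce \eqref{thelagode} to $D_x\varphi(x)^\top(\ddot A+\kk A)=0$ and then invoke \eqref{kernel} to conclude $\ddot A+\kk A\in\lspan\cof A$. The only difference is that you spell out the cancellation of the second-derivative-of-$\varphi$ terms explicitly, whereas the paper simply asserts the reduced identity.
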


\begin{proof}

The system \eqref{thelagode} implies 
\[
D_x\varphi\circ x^\top[\ddot A+\kk A]=0,
\]
or in other words, by \eqref{kernel}
\[
\ddot A+\kk A\in\ker D_x\varphi\circ x^\top = (\image D_x\varphi\circ x)^\perp=\lspan \cof\varphi\circ x=\lspan\cof A.
\]
This shows that $A$ satisfies \eqref{mainode}.

\end{proof}

\begin{lemma}
\label{Ham1}
The Hamiltonian\footnote{In this section, $p\in\rr^3$ denotes the generalized momentum, and not
the pressure.}
 associated to $\Ell$ is
\[
H(x,p)=\half\ip{g^{-1}(x)p}{p}+\khalf|\varphi(x)|^2,\quad (x,p)\in\rr^3\times\rr^3,
\]
with
\begin{equation}
\label{invmetric}
g^{-1}(x)=
\begin{bmatrix}
1-x_1^2/|\varphi(x)|^2 & -x_1x_2/|\varphi(x)|^2 & 0\\
-x_1x_2/|\varphi(x)|^2 & 1-x_2^2/|\varphi(x)|^2 & 0\\
0 & 0 & 1/\rho(x)^2
\end{bmatrix}.
\end{equation}
\end{lemma}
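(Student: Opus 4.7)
The plan is to obtain $H$ by the standard Legendre transformation and then invert the matrix $g(x)$ explicitly, exploiting its block structure.

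First, I would introduce the generalized momentum $p = D_y \Ell(x,y)$. Since $\Ell(x,y) = \half\ip{g(x)y}{y} - \khalf|\varphi(x)|^2$ and $g(x)$ is symmetric, this gives $p = g(x) y$. By Lemma \ref{metric}, $g(x) = D_x\varphi(x)^\top D_x\varphi(x)$, and Lemma \ref{varphiimmersion} says $D_x\varphi(x)$ has trivial kernel, so $g(x)$ is positive definite and invertible; thus $y = g^{-1}(x) p$. The Hamiltonian is then
\[
H(x,p) = \ip{p}{y} - \Ell(x,y)\big|_{y=g^{-1}(x)p}
 = \ip{p}{g^{-1}p} - \half\ip{g\,g^{-1}p}{g^{-1}p} + \khalf|\varphi(x)|^2
 = \half\ip{g^{-1}(x)p}{p}+\khalf|\varphi(x)|^2,
\]
yielding the stated form of $H$.

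It remains to verify the explicit formula \eqref{invmetric} for $g^{-1}(x)$. Since $g(x)$ is block diagonal (the $(i,3)$ and $(3,j)$ entries vanish for $i,j\in\{1,2\}$), its inverse is block diagonal with the same splitting. The $(3,3)$ block is the scalar $\rho(x)^2$, whose inverse is $1/\rho(x)^2$, matching the lower-right entry of \eqref{invmetric}. For the upper-left $2\times 2$ block, I would write it as
\[
g_{\bx\bx}(x) = I_2 + \frac{1}{\rho(x)^2}\bx\,\bx^\top,
\qquad \bx=(x_1,x_2)^\top,
\]
and apply the Sherman--Morrison rank-one update formula:
\[
\left(I_2 + \frac{1}{\rho^2}\bx\bx^\top\right)^{-1}
 = I_2 - \frac{\bx\bx^\top/\rho^2}{1+|\bx|^2/\rho^2}
 = I_2 - \frac{\bx\bx^\top}{\rho^2+|\bx|^2}.
\]

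Finally, by Lemma \ref{varphiimmersion} we have $\rho(x)^2+|\bx|^2 = |\varphi(x)|^2$, so the denominator is $|\varphi(x)|^2$, producing exactly the upper $2\times 2$ block written in \eqref{invmetric}. No step here is truly difficult; the one place to be careful is the identification of the denominator, which is where the geometric quantity $|\varphi(x)|^2$ enters in place of the more naive $\rho(x)^2+|\bx|^2$, and this is the main structural observation that makes the formula compact enough to be useful later.
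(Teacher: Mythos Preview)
Your proof is correct and follows the same approach as the paper: perform the Legendre transform to obtain $H$, then invert $g(x)$ blockwise. The paper simply declares the inversion ``a direct computation from Lemma~\ref{metric}'', whereas you make it explicit via the Sherman--Morrison formula and the identity $\rho(x)^2+|\bx|^2=|\varphi(x)|^2$; this is a clean way to carry out that computation.
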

\begin{proof}
The Legendre transformation associated to $\Ell$ is
\[
P(x,y)=D_y\Ell(x,y)=g(x)y,\quad (x,y)\in\rr^3\times\rr^3.
\]
The inverse transformation is 
\[
Y(x,p)=g^{-1}(x)p,\quad 
(x,p)\in\rr^3\times\rr^3,
\]
 and the Hamiltonian associated to $\Ell$ is
\[
H(x,p)=\ip{Y(x,p)}{p}-\Ell(x,Y(x,p))=\half\ip{g^{-1}(x)p}{p}+\khalf|\varphi(x)|^2.
\]
The formula \eqref{invmetric} is a direct computation from \eqref{metric}.
\end{proof}

Here we can think of $(x,p)\in \rr^3\times\rr^3$ as a point in the cotangent bundle of $\rr^3$,
with the Legendre transformation $y\mapsto p=g(x)y$ taking $T_x\rr^3$ to $T^\ast_x\rr^3$.

Using the facts that
\[
\rho(x)^2=2+|\bx|^2,\quad |\varphi(x)|^2=\rho(x)^2+|\bx|^2,\quad |\bx|^2|\bp|^2=\ip{\bx}{\bp}^2+\ip{Z\bx}{\bp}^2,
\]
we can write
\[
\ip{g^{-1}(x)p}{p}=|\bp|^2-\frac{\ip{\bx}{\bp}^2}{|\varphi(x)|^2}+\frac{p_3^2}{\rho(x)^2}
=\frac{\rho(x)^2|\bp|^2+\ip{Z\bx}{\bp}^2}{|\varphi(x)|^2}+\frac{p_3^2}{\rho(x)^2},
\]
and thus,
\begin{equation*}
\label{mainham}
H(x,p)=\half\left(\frac{\rho(x)^2|\bp|^2+\ip{Z\bx}{\bp}^2}{|\varphi(x)|^2}+\frac{p_3^2}{\rho(x)^2}\right)
+\khalf|\varphi(x)|^2.
\end{equation*}
The   Hamiltonian system
\[
\dot x = D_pH(x,p),\quad \dot p=-D_xH(x,p)
\]
takes the explicit form
\begin{equation}
\label{Hamsys}
\begin{aligned}
&\dot \bx=\frac{\rho(x)^2}{|\varphi(x)|^2}\;\bp+\frac{\ip{Z\bx}{\bp}}{|\varphi(x)|^2}\;Z\bx\\
&\dot x_3=\frac{p_3}{\rho(x)^2}\\
&\dot\bp=
\left(\frac{2|\bp|^2+2\ip{Z\bx}{\bp}^2}{|\varphi(x)|^4}+\frac{p_3^2}{\rho(x)^4}-2\kk\right)\bx
+\frac{\ip{Z\bx}{\bp}}{|\varphi(x)|^2}\;Z\bp\\
&\dot p_3=0.
\end{aligned}
\end{equation}

\begin{lemma}
\label{ham2main}
If $(x,p)\in C^1(\ii,\rr^3\times\rr^3)$ is a solution of \eqref{Hamsys} on $\ii$,
then
$A=\varphi\circ x$ is a solution of \eqref{mainode}
in $C^0(\ii,\sltwo)\cap C^2(\ii,\mm^2)$.
\end{lemma}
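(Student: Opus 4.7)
The plan is to reverse the Legendre transformation and then invoke Lemma \ref{thelagodelemma}. Specifically, I will show that a solution $(x,p)$ of the Hamilton equations \eqref{Hamsys} yields a solution $x$ of the Euler--Lagrange equations \eqref{thelagode}, so that $A=\varphi\circ x$ solves \eqref{mainode} by Lemma \ref{thelagodelemma}.

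First I would verify that the first two equations in \eqref{Hamsys} are exactly $\dot x=D_pH(x,p)=g^{-1}(x)p$. Using the identity $|\bx|^2\bp-\ip{\bx}{\bp}\bx=\ip{Z\bx}{\bp}Z\bx$ (a direct two-dimensional computation), one checks that the displayed formula for $\dot\bx$ coincides with the first two components of $g^{-1}(x)p$, while the equation $\dot x_3=p_3/\rho(x)^2$ is the third component. Inverting the Legendre transformation of Lemma \ref{Ham1}, this gives $p=g(x)\dot x=P(x,\dot x)=D_y\Ell(x,\dot x)$, so $p$ is the generalized momentum associated to the curve $x(t)$.

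Next, the remaining Hamilton equations give $\dot p=-D_xH(x,p)$. From the defining relation $H(x,P(x,y))=\ip{P(x,y)}{y}-\Ell(x,y)$, differentiating in $x$ at fixed $y$ yields the standard identity $D_xH(x,P(x,y))=-D_x\Ell(x,y)$. Specializing to $y=\dot x$ and combining with $p=D_y\Ell(x,\dot x)$ from the previous step, we obtain
\[
\frac{d}{dt}D_y\Ell(x,\dot x)=\dot p=-D_xH(x,p)=D_x\Ell(x,\dot x),
\]
which is precisely \eqref{thelagode}.

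For the regularity, the right-hand side of \eqref{Hamsys} is smooth in $(x,p)$ since $\rho(x)\ge\sqrt{2}$ and $|\varphi(x)|^2\ge2$ by Lemma \ref{varphiimmersion} and Lemma \ref{so2.1}, so the denominators never vanish. Hence $(x,p)\in C^1$ in fact implies $\dot x\in C^1$, so $x\in C^2(\ii,\rr^3)$, and then $A=\varphi\circ x\in C^2(\ii,\mm^2)$ because $\varphi$ is smooth; moreover $A(t)\in\sltwo$ for every $t\in\ii$ by Lemma \ref{varphiimmersion}, giving $A\in C^0(\ii,\sltwo)$. With this regularity, Lemma \ref{thelagodelemma} applies and concludes that $A$ solves \eqref{mainode}. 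The only delicate bookkeeping is the matching of the first Hamilton equation with $g^{-1}(x)p$ in the nondiagonal $(\bx,\bp)$ block; once that identity is in hand, the remainder is a routine application of the Legendre duality already set up in Lemma \ref{Ham1}.
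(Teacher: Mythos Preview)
Your argument is correct and follows essentially the same route as the paper: establish the regularity $x\in C^2$ from the smoothness of the Hamiltonian vector field, note that $A=\varphi\circ x$ then lies in $C^0(\ii,\sltwo)\cap C^2(\ii,\mm^2)$ by Lemma~\ref{varphiimmersion}, and reduce to Lemma~\ref{thelagodelemma} via the equivalence of the Hamiltonian and Lagrangian systems. The paper simply cites this last equivalence as a ``standard fact,'' whereas you spell it out explicitly---verifying that $\dot x=g^{-1}(x)p$ (using the identity $|\bx|^2\bp-\ip{\bx}{\bp}\bx=\ip{Z\bx}{\bp}Z\bx$) and then invoking the Legendre duality relation $D_xH(x,P(x,y))=-D_x\Ell(x,y)$---which is a welcome elaboration rather than a different approach.
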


\begin{proof}
Since $H$ is smooth, it follows from \eqref{Hamsys} that $x\in C^2(\ii,\rr^3)$, and thus,
by Lemma \ref{varphiimmersion}
$\varphi\circ x\in C^2(\ii,\sltwo)$.

That $x($ solves the associated Lagrangian system \eqref{thelagode} is a 
standard fact, and by Lemma \ref{thelagodelemma}, we have that $\varphi\circ x$ solves \eqref{mainode}.
\end{proof}

\begin{lemma}
\label{gammacoord}
The map
\[
\Gamma(x,p)=(x,g^{-1}(x)p)
\]
is a bijection on $\rr^3\times\rr^3$, and the map $\Phi\circ\Gamma:\rr^3\times\rr^3\to\dd$ 
is  surjective.

\end{lemma}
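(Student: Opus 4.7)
The plan is to handle the two claims separately, and both reduce to already-established facts.

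First I would verify that $\Gamma$ is a bijection on $\rr^3\times\rr^3$. The explicit formula \eqref{invmetric} in Lemma \ref{Ham1} exhibits $g^{-1}(x)$ as a smooth, well-defined matrix for every $x\in\rr^3$; alternatively, since $g(x)=D_x\varphi(x)^\top D_x\varphi(x)$ is symmetric and positive definite (Lemma \ref{metric} together with the injectivity of $D_x\varphi(x)$ from \eqref{kernel}), $g(x)$ is invertible at every point. I would then define the candidate inverse $\Gamma^{-1}(x,y)=(x,g(x)y)$ and check in one line that $\Gamma\circ\Gamma^{-1}$ and $\Gamma^{-1}\circ\Gamma$ are both the identity, using only that $g(x)g(x)^{-1}=I_{3\times 3}=g(x)^{-1}g(x)$.

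Second, for the surjectivity of $\Phi\circ\Gamma:\rr^3\times\rr^3\to\dd$, I would invoke Lemma \ref{bigphionto}, which says that $\Phi$ already maps $\rr^3\times\rr^3$ onto $\dd$. Since $\Gamma$ is a bijection of $\rr^3\times\rr^3$ onto itself, composing with it does not change the image:
\[
(\Phi\circ\Gamma)(\rr^3\times\rr^3)=\Phi(\Gamma(\rr^3\times\rr^3))=\Phi(\rr^3\times\rr^3)=\dd.
\]

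There is no real obstacle here: the content of the lemma is essentially bookkeeping, recording that the Legendre transform $y\mapsto p=g(x)y$ from Lemma \ref{Ham1} is a pointwise linear isomorphism between $T_x\rr^3$ and $T_x^\ast\rr^3$, so that the Hamiltonian phase space $\rr^3\times\rr^3$ maps onto the tangent bundle $\dd\subset\mm^2\times\mm^2$ via $\Phi\circ\Gamma$. The mild subtlety worth noting is that invertibility of $g(x)$ relies on the immersion property of $\varphi$ from Lemma \ref{varphiimmersion}, which is exactly what was used earlier to give $g(x)$ its Riemannian metric interpretation.
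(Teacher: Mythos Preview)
Your proposal is correct and follows the same approach as the paper, which simply says the first statement is obvious and the second follows from Lemma \ref{bigphionto}. You have merely unpacked the ``obvious'' part by explicitly noting the invertibility of $g(x)$ and writing down the inverse map, which is a reasonable elaboration.
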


\begin{proof}
The first statement is obvious, and the second follows from Lemma \ref{bigphionto}.
\end{proof}

\begin{lemma}
\label{gammainv}
If $(A,B)=\Phi\circ\Gamma(x,p)$, for some 
 $(x,p)\in\rr^3\times\rr^3$,  then
\begin{align*}
&X_1(A,B)=H(x,p),\\
&X_2(A,B)=x_1p_2-x_2p_1=\ip{Z\bx}{\bp},\\
&X_3(A,B)=p_3.
\end{align*}
\end{lemma}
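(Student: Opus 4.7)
The plan is to unpack the definition $(A,B)=(\varphi(x),\,D_x\varphi(x)\,g^{-1}(x)p)$ and set $y=g^{-1}(x)p$, so that $B=D_x\varphi(x)y$ and $g(x)y=p$. The key preparatory step is to compute $Z\varphi(x)\pm\varphi(x)Z$ directly from Definition \ref{varphidef} using the multiplication table of Lemma \ref{symasym} ($Z^2=-I$, $ZK=M$, $MZ=K$, and the derived relations $ZM=-K$, $KZ=-M$). This yields the two identities
$$Z\varphi(x)+\varphi(x)Z=\sqrt{2}\bigl(-\rho(x)\sin x_3\, I+\rho(x)\cos x_3\, Z\bigr)=2\,D_{x_3}\varphi(x),$$
$$Z\varphi(x)-\varphi(x)Z=\sqrt{2}\,(x_1 M-x_2 K),$$
which are the algebraic heart of the argument.

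For $X_1$, Lemma \ref{metric} gives $g(x)=D_x\varphi(x)^\top D_x\varphi(x)$, so
$$|B|^2=\ip{D_x\varphi(x)y}{D_x\varphi(x)y}=\ip{g(x)y}{y}=\ip{g^{-1}(x)p}{p},$$
and combined with $|A|^2=|\varphi(x)|^2$ one reads off $X_1(A,B)=H(x,p)$ immediately. For $X_3$, the first identity above gives $\half(ZA+AZ)=D_{x_3}\varphi(x)$, so
$$X_3(A,B)=\ip{D_{x_3}\varphi(x)}{D_x\varphi(x)y}=\sum_i g_{3i}(x)y_i=(g(x)y)_3=p_3,$$
using that $g_{31}=g_{32}=0$ and $g_{33}=\rho(x)^2$ from Lemma \ref{metric}.

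For $X_2$, I would expand $B=y_1 D_{x_1}\varphi(x)+y_2 D_{x_2}\varphi(x)+y_3 D_{x_3}\varphi(x)$ in the basis $\{I,Z,K,M\}$. Since $K$ and $M$ appear only from $D_{x_1}\varphi$ and $D_{x_2}\varphi$ respectively, each with coefficient $1/\sqrt{2}$, the orthogonality of the basis gives $\ip{K}{B}=\sqrt{2}\,y_1$ and $\ip{M}{B}=\sqrt{2}\,y_2$. Pairing with the second identity above yields
$$X_2(A,B)=\tfrac{1}{\sqrt{2}}\bigl(x_1\ip{M}{B}-x_2\ip{K}{B}\bigr)=x_1 y_2-x_2 y_1=\ip{Z\bx}{\by}.$$
The final step uses the explicit form of $g^{-1}(x)$ from Lemma \ref{Ham1}: the upper $2\times 2$ block is $I-\bx\bx^\top/|\varphi(x)|^2$, so $\by=\bp-(\ip{\bx}{\bp}/|\varphi(x)|^2)\,\bx$. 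Because $Z$ is antisymmetric, $\ip{Z\bx}{\bx}=0$, and hence $\ip{Z\bx}{\by}=\ip{Z\bx}{\bp}$, completing the identification.

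The main obstacle is the bookkeeping in establishing the two algebraic identities for $Z\varphi\pm\varphi Z$; once those are in hand, $X_1$ and $X_3$ follow from direct orthogonality in the metric $g$, and $X_2$ requires only the extra observation that $\bx\perp Z\bx$ to replace $\by$ by $\bp$. As a conceptual check, $X_3=p_3$ reflects the cyclic nature of the angular coordinate $x_3$, and $X_2=\ip{Z\bx}{\bp}$ is the angular momentum conjugate to rotations in the $\bx$-plane, matching the $\sotwo\times\sotwo$ symmetry of $\Ell_0$ noted after Theorem \ref{inv1}.
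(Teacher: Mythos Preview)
Your proof is correct and is exactly the kind of direct computation the paper invokes; the paper itself gives no details beyond stating that the lemma follows from the definitions of $X_i$, $\Phi$, and $\Gamma$. Your two identities for $Z\varphi(x)\pm\varphi(x)Z$ are the right organizing step, and the final use of $\ip{Z\bx}{\bx}=0$ to pass from $\by$ to $\bp$ is clean and correct.
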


\begin{proof}
This lemma is a direct computation based on the definitions of $X_i$, $\Phi$, and $\Gamma$.
\end{proof}

The 
appearance the invariants $X_2$ and $X_3$ in \eqref{Hamsys} will eventually allow us to uncouple
the system.

\begin{theorem}
\label{HamInv}
If $(x,p)\in C^1(\ii,\rr^3\times\rr^3)$ is a solution of \eqref{Hamsys} on $\ii$,
then the quantities
\[\
X_i\circ\Phi\circ\Gamma(x(t),p(t)),\quad i=1, 2, 3
\]
are invariant.
The  three invariants Poisson commute, and so the system \eqref{Hamsys} is completely integrable.
\end{theorem}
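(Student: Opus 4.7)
The plan is to derive the invariance by reducing to Theorem \ref{inv1} through the correspondence of Lemma \ref{ham2main}, and then to verify the Poisson-commutation of the three integrals directly.

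First, I would observe that the first two equations of \eqref{Hamsys} read $\dot x = g^{-1}(x)p$, which is exactly $\Gamma(x(t),p(t)) = (x(t),\dot x(t))$ along any trajectory. Consequently
\[
\Phi\circ\Gamma(x(t),p(t)) = (\varphi(x(t)),\,D_x\varphi(x(t))\dot x(t)) = (A(t),\dot A(t)),
\]
where $A = \varphi\circ x$. Lemma \ref{ham2main} says that $A$ solves \eqref{mainode} in $C^0(\ii,\sltwo)\cap C^2(\ii,\mm^2)$, so Theorem \ref{inv1} gives at once the invariance of each $X_i(A(t),\dot A(t)) = X_i\circ\Phi\circ\Gamma(x(t),p(t))$ for $i=1,2,3$.

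For the involution property, I would set $F_i = X_i\circ\Phi\circ\Gamma$; Lemma \ref{gammainv} identifies these as $F_1=H$, $F_2(x,p) = x_1 p_2 - x_2 p_1$ and $F_3(x,p) = p_3$. The flow of \eqref{Hamsys} is the Hamiltonian flow of $F_1$, and the previous paragraph shows each $F_i$ is conserved along every solution through every initial point $(x_0,p_0)\in\rr^3\times\rr^3$. Hence
\[
\{F_i,F_1\}(x_0,p_0) = \left.\frac{d}{dt}\right|_{t=0} F_i(x(t),p(t)) = 0
\qquad (i=2,3),
\]
and since $(x_0,p_0)$ was arbitrary, $\{F_i,F_1\} \equiv 0$ on $\rr^3\times\rr^3$. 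The only remaining bracket is $\{F_2,F_3\}$, which vanishes by inspection of the canonical formula: $F_2$ depends only on $(x_1,x_2,p_1,p_2)$ while $F_3=p_3$ depends only on $p_3$, so all six cross-partials are zero.

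Finally, to conclude complete integrability in the Liouville--Arnold sense on the $6$-dimensional phase space, I would check functional independence of $F_1,F_2,F_3$ on an open dense subset. This is immediate from $dF_3 = dp_3$, $dF_2 = p_2\,dx_1 - p_1\,dx_2 + x_1\,dp_2 - x_2\,dp_1$, and the fact that the $dp_3$-coefficient of $dF_1 = dH$ equals $p_3/\rho(x)^2$; on the open dense set where $p_3\ne 0$ and $(\bx,\bp)\ne(0,0)$ the three differentials are linearly independent. The only place where I foresee any genuine bookkeeping is confirming the Legendre identification $p = g(x)\dot x$ and that this matches the velocity slot used in Theorem \ref{inv1}; once that identification is nailed down, every remaining step either quotes an earlier result or is a one-line direct calculation.
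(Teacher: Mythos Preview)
Your argument for invariance is exactly the paper's: identify $\dot x = g^{-1}(x)p$ so that $\Phi\circ\Gamma(x,p)=(A,\dot A)$, then invoke Lemma \ref{ham2main} and Theorem \ref{inv1}. The paper's own proof stops there and does not spell out the Poisson-commutation or functional-independence checks; your additional paragraphs supplying $\{F_2,F_1\}=\{F_3,F_1\}=0$ from conservation, $\{F_2,F_3\}=0$ from disjoint dependence, and independence of the differentials on an open dense set are correct and fill in what the paper leaves implicit.
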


\begin{proof}
By Lemma \ref{ham2main}, we know that $A=\varphi\circ x$ is a solution of \eqref{mainode}
in $C^0(\ii,\sltwo)\cap C^2(\ii,\mm^2)$.
Since $\dot{x} = D_pH(x,p)=g^{-1}(x)p$, we see that
\[
\Phi\circ\Gamma(x,p)=(\varphi(x),D_x\varphi(x)g^{-1}(x)p)=(\varphi(x),D_x\varphi(x)\dot x)=(A,\dot A).
\]
This implies that
\[
X_i\circ\Phi\circ\Gamma(x,p)=X_i(A,\dot A),
\]
so it follows from Lemma \ref{inv1} that these quantities are conserved.
\end{proof}

\begin{theorem}
For any $\kk\ge0$ and any initial data $(x(0),p(0))\in\rr^3\times\rr^3$, the system \eqref{Hamsys}
has global solutions.

\end{theorem}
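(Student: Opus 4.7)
The plan is to combine Picard local existence with \emph{a priori} bounds derived from the conservation laws of Theorem \ref{HamInv}. By Lemma \ref{varphiimmersion}, $\rho(x)^2\ge 2$ and $|\varphi(x)|^2\ge 2$, so every denominator appearing in the Hamiltonian vector field \eqref{Hamsys} is bounded away from $0$, and that vector field is therefore smooth on all of $\rr^3\times\rr^3$. Picard's theorem produces a unique maximal solution $(x,p)\in C^1(\ii,\rr^3\times\rr^3)$, and standard ODE theory reduces the claim to showing that $(x(t),p(t))$ remains bounded on every compact subinterval of $\ii$.

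I would then invoke Theorem \ref{HamInv} together with Lemma \ref{gammainv}: both $H(x(t),p(t))$ and $p_3(t)$ are constant in $t$. In the case $\kk>0$, the term $\tfrac{\kk}{2}|\varphi(x)|^2$ in $H$ immediately gives $|\varphi(x)|^2\le 2H/\kk$, and by Lemma \ref{varphiimmersion} this bounds $|\bx|$, and hence $\rho(x)^2$, uniformly in $t$. The kinetic part of $H$ then yields $|\bp|^2\le 2H|\varphi(x)|^2/\rho(x)^2$, which is likewise bounded, while $|\dot x_3|=|p_3|/\rho(x)^2\le|p_3|/2$ forces $x_3$ to grow at most linearly.

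The main obstacle is the case $\kk=0$, where the Hamiltonian no longer controls $|\bx|$ directly; I would close the estimate by a Gronwall argument. From $H$ one still has $|\bp|^2\le 2H|\varphi(x)|^2/\rho(x)^2\le 2H(1+|\bx|^2/2)$, so $|\bp|\le C(1+|\bx|)$. The first equation of \eqref{Hamsys}, together with the identity $\rho(x)^2+|\bx|^2=|\varphi(x)|^2$, gives the sharp pointwise bound
\[
|\dot\bx|\le \frac{\rho(x)^2}{|\varphi(x)|^2}|\bp|+\frac{|\bx|^2}{|\varphi(x)|^2}|\bp|=|\bp|\le C(1+|\bx|),
\]
so Gronwall bounds $|\bx|$ on any finite interval, which in turn bounds $|\bp|$ there, and $|\dot x_3|\le|p_3|/2$ again forces $x_3$ to grow at most linearly. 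In either case the maximal interval must be all of $\rr$.
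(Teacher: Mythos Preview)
Your argument is correct and follows the paper's strategy: use conservation of $H$ (Theorem \ref{HamInv}) to obtain a priori bounds that preclude finite-time blow-up. Two minor remarks. First, you are actually more careful than the paper in the $\kk>0$ case: the paper asserts that ``all solutions remain bounded,'' but $x_3$ need not be bounded when $p_3\ne0$; your treatment handles this correctly via $|\dot x_3|\le|p_3|/2$. Second, your Gronwall step for $\kk=0$ is more than necessary: since $|\varphi(x)|^2/\rho(x)^2=2(1+|\bx|^2)/(2+|\bx|^2)\le 2$, the Hamiltonian already gives the \emph{uniform} bound $|\bp|^2\le 4H$, and then your own inequality $|\dot\bx|\le|\bp|$ yields linear growth of $|\bx|$ directly, with no Gronwall required---this is precisely the paper's (terser) observation that $|\dot x|$ is uniformly bounded.
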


\begin{proof}
By Theorem \ref{HamInv}, $X_1(x(t),p(t))$ is conserved along any solution $(x(t),p(t))$.
If $\kk>0$, this implies that all solutions remain bounded, and thus, global existence holds.
If $\kk=0$, then $|p(t)|$ is bounded.  It then follows from \eqref{Hamsys} that $|\dot x(t)|$
is uniformly bounded, which prevents blow up in finite time.
\end{proof}

\begin{lemma}
\label{so2x2}
If $(A,B)\in\dd_0$, then $X_2(A,B)= 0$.
If $(A,B)\in\dd$ and $X_2(A,B)\ne 0$, then $A\in\sltwo\setminus\sotwo$.
\end{lemma}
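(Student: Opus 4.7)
The plan is to reduce both statements to a single commutation identity: elements of $\sotwo$ commute with $Z$.

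First I would handle the first assertion. If $(A,B) \in \dd_0$, then $A \in \sotwo$, so by Lemma \ref{so2.2} there exists $\theta \in \rr$ with $A = U(\theta) = \exp(\theta Z) = \cos\theta\, I + \sin\theta\, Z$. Since $I$ is central and $Z$ commutes with itself, $A$ commutes with $Z$, giving $ZA - AZ = 0$. Plugging into Definition \ref{exdef} yields
\[
X_2(A,B) = \tfrac12 \ip{ZA - AZ}{B} = 0,
\]
regardless of the choice of $B \in T_A\sltwo$.

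For the second assertion I would argue by contrapositive. Suppose $(A,B) \in \dd$ with $A \in \sotwo$. Then $(A,B) \in \dd_0$ by Definition \ref{tanbund}, and the first part yields $X_2(A,B) = 0$. Consequently, if $X_2(A,B) \neq 0$, then $A \notin \sotwo$; since $(A,B) \in \dd$ forces $A \in \sltwo$, we conclude $A \in \sltwo \setminus \sotwo$.

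There is no real obstacle here; the only substantive input is the commutativity $AZ = ZA$ for $A \in \sotwo$, which is immediate from the one-parameter subgroup representation in Definition \ref{rotmat}. The rest is definition-chasing.
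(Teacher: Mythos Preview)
Your proof is correct and follows essentially the same approach as the paper: both use Lemma~\ref{so2.2} to see that $A\in\sotwo$ commutes with $Z$, whence $ZA-AZ=0$ and $X_2(A,B)=0$, and both obtain the second assertion as the contrapositive of the first.
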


\begin{proof}
If $(A,B)\in\dd_0$, then $A\in\sotwo$. 
By Lemma \ref{so2.2}, $ZA-AZ=0$, and so we have $X_2(A,B)=0$.
If $(A,B)\in\dd$ and $X_2(A,B)\ne 0$, then $(A,B)\in\dd\setminus\dd_0$, so
$A\in\sltwo\setminus\sotwo$.
\end{proof}

\begin{corollary}
\label{passthrough}
Let $(x,p)\in C^1(\rr,\rr^3\times\rr^3)$ be a (necessarily global) solution of \eqref{Hamsys},
and set  
\[
X_2(t)=x_1(t)p_2(t)-x_2(t)p_1(t).
\]
 If $X_2(t_0)\ne0$ for a single time $t_0\in\rr$, then $\bx(t)\ne0$ and 
 $\varphi\circ x(t)\in \sltwo\setminus\sotwo$,  for all $t\in\rr$.
If  $\bx(t_0)=0$
 for a single time $t_0\in \rr$, then $X_2(t)=0$, for all $t\in\rr$.
\end{corollary}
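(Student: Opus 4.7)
The plan is to leverage conservation: the quantity $X_2(t)=x_1(t)p_2(t)-x_2(t)p_1(t)$ appearing in the statement is \emph{exactly} one of the three conserved quantities already identified in Theorem \ref{HamInv}. Once that identification is made, both conclusions follow from elementary logic plus the characterization of $\sotwo$ in coordinates from Lemma \ref{varphiimmersion}.

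First I would verify the identification. By Lemma \ref{gammainv},
\[
X_2\circ\Phi\circ\Gamma(x(t),p(t)) \;=\; x_1(t)p_2(t)-x_2(t)p_1(t) \;=\; X_2(t),
\]
where the first $X_2$ is the map from Definition \ref{exdef} and $X_2(t)$ is the scalar function defined in the statement of the corollary. By Theorem \ref{HamInv}, this quantity is invariant along any solution of \eqref{Hamsys}, so $X_2(t)\equiv X_2(t_0)$ for all $t,t_0\in\rr$.

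For the first assertion, suppose $X_2(t_0)\ne 0$. Invariance gives $X_2(t)\ne 0$ for every $t\in\rr$. Since the explicit formula $X_2(t)=x_1(t)p_2(t)-x_2(t)p_1(t)$ clearly vanishes whenever $\bx(t)=0$, we must have $\bx(t)\ne 0$ throughout. Lemma \ref{varphiimmersion} states that $\varphi(x(t))\in\sotwo$ if and only if $\bx(t)=0$, so $\varphi\circ x(t)\in\sltwo\setminus\sotwo$ for all $t\in\rr$, as claimed. (One could equally well invoke Lemma \ref{so2x2} applied to the pair $(A,B)=\Phi\circ\Gamma(x(t),p(t))$, bypassing the coordinate description, but the direct route via Lemma \ref{varphiimmersion} is cleanest.)

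For the second assertion, the hypothesis $\bx(t_0)=0$ immediately yields $X_2(t_0)=x_1(t_0)p_2(t_0)-x_2(t_0)p_1(t_0)=0$ from the defining formula, and invariance then propagates this to give $X_2(t)=0$ for all $t\in\rr$. There is no genuine obstacle in this proof; the only subtle point is the notational check that the $X_2(t)$ of the corollary really coincides with the transported version of the invariant $X_2$ from Definition \ref{exdef}, which is precisely what Lemma \ref{gammainv} supplies.
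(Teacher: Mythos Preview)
Your proof is correct and follows essentially the same route as the paper: invoke the invariance of $X_2(t)$ from Theorem \ref{HamInv}, observe that $X_2(t)\ne0$ forces $\bx(t)\ne0$, and appeal to Lemma \ref{varphiimmersion} for the $\sotwo$ characterization. The only difference is that you spell out the identification via Lemma \ref{gammainv} more explicitly than the paper does.
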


\begin{proof}
By Theorem \ref{HamInv}, the quantity $X_2(t)$ is invariant.  Therefore,
either $X_2(t)=0$ for all $t\in\rr$,  or $X_2(t)\ne0$, for all $t\in\rr$.
So if $X_2(t)$ vanishes for a single time, then it vanishes identically.
Otherwise, $X_2(t)\ne0$ on $\rr$ which forces $\bx(t)\ne0$ on $\rr$,
and by Lemma \ref{varphiimmersion},
$\varphi\circ x(t)\in\sltwo\setminus\sotwo$ on $\rr$.

  \end{proof}

The behavior  of the system \eqref{Hamsys} is topologically different depending on whether the invariant
$X_2$ is nonzero or not.  We shall now consider each case in turn.

\section{Reduction to the phase plane in the case $X_2\ne0$}
\label{Hamkp}

According to Corollary \ref{passthrough}, the property $\bx(t_0)\ne0$ is preserved
by the  flow of \eqref{Hamsys}.  Therefore, it is natural to introduce polar coordinates
for $\bx$ in this case.

\begin{definition}
\label{psidef}
Define the map $\psi:\rr^3\to\rr^3$ by
\[
\psi(q)=
\begin{bmatrix}
q_1\cos q_2\\ q_1\sin q_2\\ q_3
\end{bmatrix}.
\]
\end{definition}

\begin{remark}
\label{halfspace}
We shall mostly consider $\psi$ on the restricted domain $\rr^3_+=\{q\in\rr^3:q_1>0\}$.
\end{remark}

\begin{lemma}
\label{phipsiimmersion}
The map $\varphi\circ\psi:\rr^3\to\sltwo$ is surjective, and $\varphi\circ\psi(q)\in\sotwo$ if and only if $q_1=0$.

The map $\varphi\circ\psi$ restricted to $\rr^3_+$ is an immersion from $\rr^3_+$ onto $\sltwo\setminus\sotwo$.
The restricted map $\varphi\circ\psi $ defines a local coordinate chart for $\sltwo\setminus\sotwo$ 
in a neighborhood of any point $q\in\rr^3_+$.

\end{lemma}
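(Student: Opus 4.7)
The plan is to split the assertion into three sub-claims --- surjectivity of $\varphi\circ\psi$ onto $\sltwo$ together with the $\sotwo$ characterization, surjectivity of the restriction to $\rr^3_+$ onto $\sltwo\setminus\sotwo$, and the immersion/local chart property --- and to reduce each one to the corresponding statement for $\varphi$ supplied by Lemma \ref{varphiimmersion}, combined with elementary facts about polar coordinates.

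First, $\psi$ is nothing but the standard polar parameterization of $\rr^2$ tensored with the identity in the third slot, so $\psi(\rr^3)=\rr^3$, and composing with Lemma \ref{varphiimmersion} gives $\varphi\circ\psi(\rr^3)=\sltwo$. For the $\sotwo$ characterization, observe that $\overline{\psi(q)}=(q_1\cos q_2,q_1\sin q_2)$ vanishes precisely when $q_1=0$, and then quote the criterion $\varphi(x)\in\sotwo \iff \bx=0$ from Lemma \ref{varphiimmersion}.

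Second, restricted to $\rr^3_+=\{q_1>0\}$, polar coordinates sweep out $\rr^2\setminus\{0\}$, so $\psi(\rr^3_+)=\{x\in\rr^3:\bx\ne 0\}$; the $\sotwo$ characterization of Lemma \ref{varphiimmersion} then says that $\varphi$ maps this set precisely onto $\sltwo\setminus\sotwo$.

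Third, for the immersion/chart statement, I would compute
\[
D_q\psi(q)=\begin{bmatrix}\cos q_2 & -q_1\sin q_2 & 0\\ \sin q_2 & q_1\cos q_2 & 0\\ 0 & 0 & 1\end{bmatrix},
\]
whose determinant is $q_1>0$ on $\rr^3_+$, so $D_q\psi(q)$ is a linear isomorphism there. Composing with $D_{\psi(q)}\varphi$, which is injective by \eqref{kernel} from the proof of Lemma \ref{varphiimmersion}, yields $\ker D_q(\varphi\circ\psi)(q)=\{0\}$, establishing the immersion property. Since $\rr^3_+$ and $\sltwo\setminus\sotwo$ are both smooth $3$-manifolds, the inverse function theorem upgrades the equidimensional immersion to a local diffeomorphism, which is precisely the local coordinate chart claim. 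The main obstacle is largely bookkeeping: one must take care that the restricted map genuinely lands in $\sltwo\setminus\sotwo$ rather than merely in $\sltwo$, and that the local chart conclusion requires matching dimensions of source and target, not just the immersion condition.
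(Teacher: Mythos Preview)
Your proof is correct and follows essentially the same approach as the paper: reduce everything to Lemma \ref{varphiimmersion} via the elementary properties of polar coordinates, noting that $D_q\psi(q)$ is invertible on $\rr^3_+$ so that the composite differential is a bijection onto the tangent space. The paper's version is terser (it asserts the bijectivity of $D_q\psi(q)$ without writing out the Jacobian and omits your explicit second step about surjectivity onto $\sltwo\setminus\sotwo$, which is implicit in the first paragraph), but the logic is identical.
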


\begin{proof}
The map $\psi:\rr^3\to\rr^3$ is surjective, and  $\psi_1(q)=\psi_2(q)=0$ if and
only if $q_1=0$, so the first statement follows by Lemma \ref{varphiimmersion}.

For each $q\in\rr^3_+$, $D_q\psi(q)$ is a bijection on $\rr^3$, and so by Lemma \ref{varphiimmersion},
\[
D_q[\varphi\circ\psi(q)]:\rr^3\to T_{\varphi\circ\psi(q)}\sltwo
\]
is a bijection for each $q\in\rr^3_+$.  This shows that $\varphi\circ\psi$ is an immersion,
and it follows that $\varphi\circ\psi$ defines a local coordinate chart near any point $q\in\rr^3_+$.

\end{proof}

\begin{lemma}
\label{Aform1}
If $A\in\sltwo$ 
and 
$
A=\varphi\circ\psi(q),
$
for some $q\in\rr^3$, then  $\half|A|^2=1+q_1^2$ and
\[
A=\tfrac1{\sqrt2}U \left(\half(q_3+q_2)\right) \left(\rho I+q_1 K\right)U \left(\half(q_3-q_2)\right),
\isp{with}\rho=(2+q_1^2)^{1/2}.
\]
The image of the unit disk
under $A$ is an ellipse with principal axes of lengths
\begin{equation*}
\left[(\half|A|^2+1)^{1/2}\pm(\half|A|^2-1)^{1/2}\right]/\sqrt2.
\end{equation*}

\end{lemma}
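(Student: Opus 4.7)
The proof breaks into three parts, tracking the three assertions of the statement.

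First, the norm identity. Since $A = \varphi \circ \psi(q)$ with $\psi(q) = (q_1\cos q_2, q_1\sin q_2, q_3)$, we have $\bar x = (q_1\cos q_2, q_1\sin q_2)$ and hence $|\bar x|^2 = q_1^2$. Lemma \ref{varphiimmersion} then yields $|A|^2 = 2(1+|\bar x|^2) = 2(1+q_1^2)$, which gives $\tfrac12|A|^2 = 1+q_1^2$ and in particular $\rho(\psi(q))^2 = 2+q_1^2$, matching the claimed value of $\rho$.

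Second, the matrix factorization. The plan is to expand the claimed product and match it against the explicit formula $A = \tfrac{1}{\sqrt 2}(\rho\cos q_3\,I + \rho\sin q_3\,Z + q_1\cos q_2\,K + q_1\sin q_2\,M)$ coming from Definition \ref{varphidef} composed with $\psi$. Setting $\alpha = \tfrac12(q_3+q_2)$ and $\beta = \tfrac12(q_3-q_2)$, I would compute $U(\alpha)(\rho I + q_1 K)U(\beta)$ by writing $U(\alpha) = \cos\alpha\,I + \sin\alpha\,Z$ and similarly for $U(\beta)$, distributing, and reducing each monomial $I\cdot I$, $I\cdot Z$, $K\cdot I$, $K\cdot Z$, $Z\cdot K$, etc.\ using the multiplication table of Lemma \ref{symasym} (together with the easily verified $KZ = -M$, which follows from $MK = Z$ and $K^2 = I$). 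Grouping the $I$ and $Z$ terms yields a factor of $\rho$ times $\cos(\alpha+\beta)I + \sin(\alpha+\beta)Z$ by the addition formulas, while the $K$ and $M$ terms combine to $q_1(\cos(\alpha-\beta)K + \sin(\alpha-\beta)M)$. Since $\alpha+\beta = q_3$ and $\alpha-\beta = q_2$, this reproduces $A$ after the $1/\sqrt 2$ factor.

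Third, the axis lengths. I would use the factorization just obtained to write
\[
AA^\top = \tfrac12\,U(\alpha)(\rho I + q_1 K)U(\beta)U(\beta)^\top(\rho I + q_1 K)^\top U(\alpha)^\top
= \tfrac12\,U(\alpha)(\rho I + q_1 K)^2 U(\alpha)^\top,
\]
since $U(\beta)$ is orthogonal and $(\rho I+q_1 K)$ is symmetric. Because $K^2 = I$, the inner square equals $(\rho^2+q_1^2)I + 2\rho q_1 K$, whose eigenvalues are $(\rho\pm q_1)^2$. Conjugation by $U(\alpha)$ preserves eigenvalues, so $AA^\top$ has eigenvalues $\tfrac12(\rho\pm q_1)^2$, and the principal axes of $A\bb$ — which are the eigenvalues of the stretch tensor $(AA^\top)^{1/2}$ — equal $\tfrac{1}{\sqrt 2}(\rho\pm q_1)$ (noting $\rho > |q_1|$). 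Finally, the relations $\rho^2 = 2+q_1^2 = \tfrac12|A|^2 + 1$ and $q_1^2 = \tfrac12|A|^2 - 1$ let me rewrite these as $\left[(\tfrac12|A|^2+1)^{1/2} \pm (\tfrac12|A|^2-1)^{1/2}\right]/\sqrt 2$, as stated.

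The main obstacle is the bookkeeping in step two: correctly tracking signs in products like $KZ$ and $Z^2$ and then collapsing eight trigonometric monomials into the four terms prescribed by $\varphi\circ\psi(q)$. Everything else is organizational, once the factorization is in hand.
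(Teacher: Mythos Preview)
Your proposal is correct and follows essentially the same approach as the paper. The only differences are stylistic: for the factorization, the paper derives it forward by writing $\sqrt2 A = \rho\,U(q_3) + q_1 U(q_2)K$ and then factoring each summand as $U(\alpha)(\cdot)U(\beta)$, whereas you verify the identity by expanding the product; for the axis lengths, the paper simply reads off the diagonal entries $\rho\pm q_1$ of $\rho I + q_1 K$ and uses that the flanking rotations preserve the unit disk, while you compute $AA^\top$ explicitly---both routes are equally valid and rely on the same algebraic ingredients.
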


\begin{proof}
By definition, we have
\[
A=\varphi\circ\psi(q)
=\tfrac1{\sqrt2}\left[\rho\left(\cos q_3I+\sin q_3 Z\right)+q_1\left(\cos q_2K+\sin q_2 M\right)\right],
\]
with $ \rho=(2+q_1^2)^{1/2}$.
Recalling definition \ref{rotmat}, we set
$U(\theta)=\cos \theta I+\sin\theta Z$.
Since $M=ZK=-KZ$,  we have
\[
\cos\theta K+\sin\theta M= (\cos\theta I+\sin\theta Z)K=U(\theta)K=KU(-\theta).
\]
Therefore, we obtain
\begin{align*}
\sqrt2A
&=\rho\; U\left(q_3\right)+q_1U\left(q_2\right)K\\
&=U\left(\half(q_3+q_2)\right)\rho I\;U \left(\half(q_3-q_2)\right)+U \left(\half(q_3+q_2)\right)q_1 KU \left(\half(q_3-q_2)\right)\\
&=U \left(\half(q_3+q_2)\right) \left(\rho I+q_1 K\right)U \left(\half(q_3-q_2)\right).
\end{align*}
Since $\rho I+q_1K=\diag \begin{bmatrix}
\rho+q_1 & \rho-q_1
\end{bmatrix}$, the formula shows that the image of the unit disk
under $A$ is an ellipse with principal axes of lengths 
$
(\rho\pm q_1)/\sqrt2,
$
giving \eqref{praxes}.
\end{proof}

\begin{definition}
\label{cappsidef}
Set $\rr^1_3=\{(0,0,q_3)\in\rr^3:q_3\in\rr\}$, and define the map 
$
\Psi:\rr^3_+\times\rr^3\to(\rr^3\setminus\rr^1_3)\times\rr^3
$
 by
\[
\Psi(q,\xi)=(\psi(q),D\psi(q)^{-\top}\xi).
\]
\end{definition}

$\Psi$ is well-defined since $D\psi(q)$ is invertible when $q\in\rr^3_+$.

\begin{lemma}
\label{canonical}
The transformation $\Psi$ is canonical. 
\end{lemma}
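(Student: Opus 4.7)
The statement is an instance of the standard fact that a point transformation lifted to cotangent bundles via the inverse transpose of its Jacobian preserves the canonical symplectic structure. The plan is therefore to verify that $\Psi$ pulls back the Liouville $1$-form on $(\rr^3\setminus\rr^1_3)\times\rr^3$ to the Liouville $1$-form on $\rr^3_+\times\rr^3$; taking the exterior derivative then yields the equality of symplectic forms, which is the definition of a canonical transformation.

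First, recall that with coordinates $(x,p)$ on the target copy of $\rr^3\times\rr^3$, the canonical $1$-form is $\theta=\sum_{i=1}^3 p_i\,dx_i$, and on $\rr^3_+\times\rr^3$ with coordinates $(q,\xi)$ it is $\tilde\theta=\sum_{i=1}^3 \xi_i\,dq_i$. Write $x=\psi(q)$ and $p=D\psi(q)^{-\top}\xi$, so that $dx=D\psi(q)\,dq$ and $p^\top=\xi^\top D\psi(q)^{-1}$. Then
\[
\Psi^\ast\theta
= p\cdot dx
= \bigl(D\psi(q)^{-\top}\xi\bigr)\cdot\bigl(D\psi(q)\,dq\bigr)
= \xi^\top D\psi(q)^{-1}D\psi(q)\,dq
= \xi\cdot dq=\tilde\theta.
\]
Since $D\psi(q)$ is invertible on $\rr^3_+$, the above manipulations are valid throughout the domain of $\Psi$.

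Taking the exterior derivative and using that $d$ commutes with pull-back, we obtain
\[
\Psi^\ast\Bigl(\sum_{i=1}^3 dp_i\wedge dx_i\Bigr)=\Psi^\ast(d\theta)=d(\Psi^\ast\theta)=d\tilde\theta=\sum_{i=1}^3 d\xi_i\wedge dq_i.
\]
This is exactly the statement that $\Psi$ preserves the canonical symplectic form, i.e., $\Psi$ is canonical.

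There is no real obstacle here; the only thing to be careful about is bookkeeping the transpose when moving $D\psi(q)^{-\top}$ across the dot product. Should the authors prefer a coordinate proof, one could instead verify directly that the Poisson brackets $\{q_i,q_j\}=0$, $\{\xi_i,\xi_j\}=0$, and $\{q_i,\xi_j\}=\delta_{ij}$ hold when expressed in $(x,p)$-coordinates, but the $1$-form computation above is cleaner and avoids any appeal to the specific form of $\psi$ beyond the invertibility of $D\psi$ on $\rr^3_+$ established by Lemma \ref{phipsiimmersion}.
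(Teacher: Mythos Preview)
Your proof is correct: this is exactly the standard argument that the cotangent lift of a point transformation preserves the Liouville $1$-form, and you have carried it out cleanly. The paper itself gives no proof of this lemma, treating it as a well-known fact, so your argument supplies what the paper omits rather than taking a different route.
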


\begin{lemma}
\label{phigammapsi}
The composition
$\Phi\circ\Gamma\circ\Psi$ maps $\rr^3_+\times\rr^3$ onto $\dd\setminus\dd_0$.
The induced metric  on $T_{\varphi\circ\psi(q)}\sltwo$ is
\[
h(q)=\diag
{\everymath{\displaystyle}
\begin{bmatrix}
\frac{2+q_1^2}{2(1+q_1^2)}&\frac1{q_1^2} & \frac1{2+q_1^2}
\end{bmatrix}.}
\]
\end{lemma}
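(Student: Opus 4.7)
The plan is to handle the two assertions separately. The surjectivity claim chain-composes three surjectivity facts already at our disposal, and the metric formula reduces to a direct pullback computation through the chart $\varphi\circ\psi$.

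For surjectivity, take $(A,B)\in\dd\setminus\dd_0$. Since $A\in\sltwo\setminus\sotwo$, Lemma \ref{phipsiimmersion} supplies $q\in\rr^3_+$ with $\varphi\circ\psi(q)=A$ and guarantees that the differential $D\varphi(\psi(q))D\psi(q):\rr^3\to T_A\sltwo$ is a linear bijection. Hence there is a unique $y\in\rr^3$ satisfying $D\varphi(\psi(q))D\psi(q)\,y=B$. Setting $\xi=D\psi(q)^\top g(\psi(q))D\psi(q)\,y$ and unwinding the three maps in order verifies $\Phi\circ\Gamma\circ\Psi(q,\xi)=(A,B)$: $\Psi$ converts $\xi$ into the momentum $D\psi(q)^{-\top}\xi=g(\psi(q))D\psi(q)\,y$ based at $\psi(q)$; $\Gamma$ then turns this momentum into the velocity $g^{-1}(\psi(q))g(\psi(q))D\psi(q)\,y=D\psi(q)\,y$; and $\Phi$ pushes the result forward by $D\varphi(\psi(q))$, recovering $B$. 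Because $q_1>0$, the first component $\varphi\circ\psi(q)$ is forced into $\sltwo\setminus\sotwo$ by Lemma \ref{phipsiimmersion}, so the image of the composition is exactly $\dd\setminus\dd_0$.

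For the metric, the natural pullback of the Euclidean inner product on $\mm^2$ through the chart $\varphi\circ\psi$ is
\[
D(\varphi\circ\psi)(q)^\top D(\varphi\circ\psi)(q)=D\psi(q)^\top g(\psi(q))D\psi(q),
\]
with $g$ supplied by Lemma \ref{metric}. Substituting $\bx=q_1(\cos q_2,\sin q_2)$ and $\rho(x)^2=2+q_1^2$, the upper $2\times 2$ block of $g(\psi(q))$ becomes the rank-one perturbation $I_2+\frac{q_1^2}{2+q_1^2}vv^\top$ with $v=(\cos q_2,\sin q_2)^\top$. The upper block of $D\psi(q)$ has first column $v$ and second column $q_1(-\sin q_2,\cos q_2)^\top$, which is orthogonal to $v$; conjugation by $D\psi(q)$ therefore diagonalizes this perturbation and the off-diagonal entries cancel identically. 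The diagonal entries that emerge are $\tfrac{2(1+q_1^2)}{2+q_1^2}$, $q_1^2$, and $2+q_1^2$. Inverting entrywise produces the matrix displayed in the statement; this is the cometric on $T^\ast_{\varphi\circ\psi(q)}\sltwo$, which is precisely what appears in the kinetic term of the transformed Hamiltonian written in the canonical variables $(q,\xi)$.

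The only mild obstacle is noticing the rank-one/polar-frame structure that forces the cross terms to vanish; once that pattern is identified, the entire computation collapses to checking the three scalar diagonal entries above.
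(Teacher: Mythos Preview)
Your proof is correct and follows essentially the same route as the paper. The paper's argument cites the same lemmas for surjectivity and then computes $h(q)=T(q)^\top T(q)=D\psi(q)^{-1}\,g^{-1}(\psi(q))\,D\psi(q)^{-\top}$ directly; you instead compute the pullback metric $D\psi(q)^\top g(\psi(q))\,D\psi(q)$, observe it is diagonal via the polar-frame structure, and invert entrywise --- a trivially equivalent variation that actually makes the diagonality transparent.
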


\begin{proof}
The first statement follows from Lemmas \ref{bigphionto}, \ref{gammacoord}, and \ref{phipsiimmersion}.
Explicitly, we have
\[
\Phi\circ\Gamma\circ\Psi(q,\xi)=(\varphi\circ\psi(q),T(q)\xi)
\]
with 
\[
T(q)\equiv D_x\varphi\circ\psi(q) \;g^{-1}\circ\psi(q)\;D_q\psi(q)^{-1}:\rr_+^3\to T_{\varphi\circ\psi(q)}\sltwo.
\]
The metric $h(q)$ can be found by computing 
\[
h(q)=T(q)^\top T(q) =D_q\psi^{-1}(q)\;g^{-1}\circ\psi(q)\;D_q\psi(q)^{-\top}.
\]
\end{proof}

\begin{definition}
\label{Ham2}
Define the Hamiltonian
\begin{multline*}
\widetilde H(q,\xi)=H\circ\Psi(q,\xi)
=\half\ip{h(q)\xi}{\xi}+\khalf|\varphi\circ\psi(q)|^2\\
=\half\left(\frac{(2+q_1^2)\xi_1^2}{2(1+q_1^2)}+\frac{\xi_2^2}{q_1^2}+\frac{\xi_3^2}{2+q_1^2}\right)+\kk(1+q_1^2),
\end{multline*}
for $ (q,\xi)\in\rr^3_+\times\rr^3$.
\end{definition}

The corresponding system is
\begin{equation}
\begin{aligned}
\label{Hamsys2}
&\dot q_1=\frac{(2+q_1^2)\xi_1}{2(1+q_1^2)}
&\qquad&\dot \xi_1=\left(\frac{\xi_1^2}{2(1+q_1^2)^2}+\frac{\xi_2^2}{q_1^4}+\frac{\xi_3^2}{(2+q_1^2)^2}-2\kk\right) q_1\\
&\dot q_2=\frac{\xi_2}{q_1^2}&&\dot \xi_2=0\\
&\dot q_3=\frac{\xi_3}{2+q_1^2}&&\dot\xi_3=0.
\end{aligned}
\end{equation}
Notice that our choice of polar coordinates for $\bx$ has created a singularity at $q_1=|\bx|=0$,
corresponding to $\sotwo$.

\begin{lemma}
\label{can}
If $(q,\xi)\in C^1(\ii,\rr^3_+\times\rr^3)$ is a solution of \eqref{Hamsys2} on $\ii$, then
$(x,p)=\Psi(q,\xi)$ solves \eqref{Hamsys} on $\ii$.
\end{lemma}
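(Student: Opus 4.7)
The plan is to exploit that $\Psi$ is a canonical transformation (Lemma \ref{canonical}) and that by Definition \ref{Ham2} the Hamiltonian $\widetilde H$ in the new coordinates is precisely the pullback $H\circ\Psi$. Under these two facts, Hamilton's equations transform covariantly, so a solution of \eqref{Hamsys2} is automatically pushed forward to a solution of \eqref{Hamsys}.

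Concretely, I would first verify that the right-hand sides of \eqref{Hamsys2} are exactly $D_\xi\widetilde H(q,\xi)$ and $-D_q\widetilde H(q,\xi)$, which is a direct differentiation of the explicit formula for $\widetilde H$ in Definition \ref{Ham2}. Then, assuming $(q,\xi)\in C^1(\ii,\rr^3_+\times\rr^3)$ satisfies this system, I would set $(x,p)=\Psi(q,\xi)$ and differentiate using the chain rule to write $(\dot x,\dot p)$ in terms of $(\dot q,\dot\xi)$ via the Jacobian of $\Psi$.

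The canonical property of $\Psi$ (which says that the Jacobian of $\Psi$ is a symplectic matrix with respect to the standard symplectic form on $\rr^3\times\rr^3$) then translates Hamilton's equations for $\widetilde H=H\circ\Psi$ into Hamilton's equations for $H$ at the image point. That is, if $J$ denotes the standard symplectic matrix and $D\Psi$ its Jacobian, then symplecticity $D\Psi^\top J D\Psi = J$ combined with the chain-rule identity $D\widetilde H = D\Psi^\top (DH\circ\Psi)$ gives
\[
\begin{bmatrix}\dot x\\ \dot p\end{bmatrix}=D\Psi\begin{bmatrix}\dot q\\ \dot\xi\end{bmatrix}=D\Psi\, J\, D\widetilde H^\top=D\Psi\, J\, D\Psi^\top (DH\circ\Psi)^\top = J (DH\circ\Psi)^\top,
\]
which is exactly \eqref{Hamsys} evaluated at $(x,p)$.

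The main obstacle is that Lemma \ref{canonical} is stated without proof in the excerpt, so invoking it is the crux. If one wanted a self-contained argument, the cleanest route is to observe that $\Psi$ has the special form $(q,\xi)\mapsto(\psi(q),D\psi(q)^{-\top}\xi)$, i.e.\ a point transformation lifted to the cotangent bundle; such lifts are always canonical because they preserve the tautological one-form $p\,dx$, as one checks by a direct computation $p\cdot dx = (D\psi(q)^{-\top}\xi)\cdot(D\psi(q)\,dq)=\xi\cdot dq$. Once this is in hand, the equivalence of the two Hamiltonian systems is immediate and the lemma follows.
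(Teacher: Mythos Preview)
Your proposal is correct and follows exactly the same approach as the paper: the paper's proof is the single sentence ``The statement holds because the transformation $\Psi$ is canonical,'' and you have simply unpacked this standard fact with the explicit symplectic-Jacobian computation and the observation that $\Psi$ is a cotangent lift of a point transformation. There is nothing to add.
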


\begin{proof}
The  statement holds because the transformation $\Psi$ is canonical.
\end{proof}

\begin{lemma}
\label{qxiinv1}
If $(A,B)=\Phi\circ\Gamma\circ\Psi(q,\xi)$, for some 
 $(q,\xi)\in\rr^3\times\rr^3$,  then
 \[
 X_1(A,B)=\widetilde H(q,\xi),
 \quad
 X_i(A,B)=\xi_i,
\quad
i=2,3,
\]
and these quantities are invariant along solutions of \eqref{Hamsys2}.
\end{lemma}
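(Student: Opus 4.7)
The plan is to reduce the statement to Lemma \ref{gammainv} by composing with $\Psi$ and exploiting that this is just a canonical change of coordinates on the cotangent bundle of $\rr^3$. The key observation is that if $(q,\xi)\in\rr^3_+\times\rr^3$ and we set $(x,p)=\Psi(q,\xi)=(\psi(q),D\psi(q)^{-\top}\xi)$, then $\Phi\circ\Gamma\circ\Psi(q,\xi)=\Phi\circ\Gamma(x,p)=(A,B)$. Lemma \ref{gammainv} then applies verbatim with this choice of $(x,p)$, yielding $X_1(A,B)=H(x,p)$, $X_2(A,B)=\ip{Z\bx}{\bp}$, and $X_3(A,B)=p_3$; all three identities in the lemma reduce to rewriting these in the $(q,\xi)$ coordinates.

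For $X_1$ this is immediate: Definition \ref{Ham2} sets $\widetilde H(q,\xi)=H\circ\Psi(q,\xi)$, which is $H(x,p)$ by construction. For $X_2$ and $X_3$, I would compute $D\psi(q)^{\top}$ from Definition \ref{psidef} and read off the components of $\xi=D\psi(q)^{\top}p$. The third row gives $\xi_3=p_3$ directly, hence $X_3(A,B)=\xi_3$. The first two rows produce $\xi_2=q_1(\cos q_2\,p_2-\sin q_2\,p_1)$, while from $\bx=(q_1\cos q_2,q_1\sin q_2)$ one has $\ip{Z\bx}{\bp}=x_1p_2-x_2p_1=q_1(\cos q_2\,p_2-\sin q_2\,p_1)$, so $X_2(A,B)=\xi_2$.

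For invariance along solutions of \eqref{Hamsys2}, I would invoke Lemma \ref{can}: if $(q(t),\xi(t))$ solves \eqref{Hamsys2}, then $(x(t),p(t))=\Psi(q(t),\xi(t))$ solves \eqref{Hamsys}. Theorem \ref{HamInv} then guarantees conservation of the three quantities $X_i\circ\Phi\circ\Gamma(x(t),p(t))$, which by the identifications above equal $\widetilde H(q(t),\xi(t))$, $\xi_2(t)$, and $\xi_3(t)$. There is no real obstacle here; the one piece of bookkeeping worth checking is the factor of $q_1$ appearing in the middle row of $D\psi(q)^{\top}$, which is exactly what matches the polar Jacobian to the angular-momentum-type invariant $X_2=\ip{Z\bx}{\bp}$. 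As a consistency check, the conservation of $\xi_2$ and $\xi_3$ is also visible directly in \eqref{Hamsys2}, where the equations $\dot\xi_2=\dot\xi_3=0$ appear explicitly.
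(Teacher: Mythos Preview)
Your proof is correct and follows exactly the approach the paper implicitly intends: the lemma is stated without proof in the paper, but it is clearly meant to follow from Lemma~\ref{gammainv} via the canonical transformation $\Psi$, together with Definition~\ref{Ham2} for $X_1$ and Lemma~\ref{can} plus Theorem~\ref{HamInv} for invariance. Your explicit computation of $\xi=D\psi(q)^\top p$ and the verification that $\xi_2=\ip{Z\bx}{\bp}$ and $\xi_3=p_3$ fill in precisely the details the paper omits.
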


\begin{lemma}
\label{lb}
If $(q,\xi)\in C^1(\ii,\rr^3_+\times\rr^3)$ is a solution of \eqref{Hamsys2} on $\ii$
with $\xi_2\ne0$, then  $q_1(t)$ is bounded away from zero:
\[
q_1(t)^2\ge  \xi_2^2/(2\widetilde H(q,\xi))=X_2^2/(2X_1)>0,
\]
and $(q(t),\xi(t))$ is defined for all $t\in\rr$.
\end{lemma}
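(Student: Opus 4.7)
The plan is to extract both conclusions from conservation of $\widetilde H$ and $\xi_2$ together with the nonnegativity of every summand in the Hamiltonian. By Lemma \ref{qxiinv1}, both $\widetilde H$ and $\xi_2$ are constants of motion for \eqref{Hamsys2}. Inspecting Definition \ref{Ham2}, each of the four summands making up $\widetilde H$ is manifestly nonnegative: all three kinetic contributions have positive denominators since $q\in\rr^3_+$, and $\kk(1+q_1^2)\ge0$ because $\kk\ge0$. Dropping every summand except $\xi_2^2/(2q_1^2)$ therefore yields
\[
\widetilde H(q(t),\xi(t)) \;\ge\; \frac{\xi_2(t)^2}{2\,q_1(t)^2}, \qquad t\in\ii.
\]
Since $\widetilde H$ and $\xi_2\ne 0$ are constant along the flow, $\widetilde H>0$, and rearranging gives
\[
q_1(t)^2 \;\ge\; \frac{\xi_2^2}{2\widetilde H(q,\xi)} \;>\; 0,\qquad t\in\ii,
\]
which, via the identifications in Lemma \ref{qxiinv1}, is exactly $q_1(t)^2\ge X_2^2/(2X_1)$.

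For global existence I would appeal to the standard continuation criterion for smooth ODEs. The vector field on the right-hand side of \eqref{Hamsys2} is smooth on $\rr^3_+\times\rr^3$, with its only singularity sitting on $\{q_1=0\}$. The uniform lower bound just established keeps the trajectory a positive distance from this singular set throughout the maximal interval of existence, so smoothness of the vector field is not lost. It remains to rule out blow-up of $(q,\xi)$ in finite time. Conservation of $\widetilde H$ directly bounds $|\xi_1|, |\xi_2|, |\xi_3|$ in terms of $\widetilde H$ and the lower bound on $q_1$; when $\kk>0$ it also bounds $q_1$ from above, and then the three coordinate equations in \eqref{Hamsys2} give uniform control on $|\dot q_j|$. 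When $\kk=0$, one notes that $|\dot q_1|\le |\xi_1|$ since $(2+q_1^2)/(2(1+q_1^2))\le 1$, so $q_1$ grows at most linearly, and the remaining equations $\dot q_2=\xi_2/q_1^2$ and $\dot q_3=\xi_3/(2+q_1^2)$ yield bounded derivatives thanks to the uniform lower bound on $q_1$. In either case, no component of the solution can leave a bounded set in finite time, so the maximal interval is all of $\rr$.

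The only real obstacle is the case $\kk=0$, in which $\widetilde H$ no longer pins $q_1$ from above, and the at-most-linear growth of $q_1$ has to be extracted directly from the explicit form of $\dot q_1$ before the equations for $q_2$ and $q_3$ can be controlled. Everything else is a direct application of the conservation laws from Lemma \ref{qxiinv1}, combined with a Picard continuation argument mirroring the one used in the proof of Theorem \ref{ivp}.
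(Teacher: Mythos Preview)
Your proof is correct and follows the same approach as the paper. The paper's own argument is extremely terse---it simply says the lower bound follows from Definition \ref{Ham2} and Lemma \ref{qxiinv1}, and that global existence follows from the invariance of $X_1$---so what you have written is essentially a fully expanded version of that sketch, including the case split on $\kk$ for the continuation argument.
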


\begin{proof}
The lower bound follows from Defintion \ref{Ham2} and Lemma \ref{qxiinv1}.
Global existence is now a consequence of the invariance of $X_1$.
\end{proof}

\begin{theorem}
\label{mainthm1}
Fix $\kk\ge0$.
Let $(A_0,B_0)\in\dd\setminus\dd_0$ with $X_2(A_0,B_0)\ne0$.  Choose $(q(0),\xi(0))\in\rr^3_+\times\rr^3$ such that 
$\Phi\circ\Gamma\circ\Psi(q(0),\xi(0))=(A_0,B_0)$.  Let 
$(q,\xi)\in C^1(\rr,\rr^3_+\times\rr^3)$ be the global solution of \eqref{Hamsys2} with initial data $(q(0),\xi(0))$.
  Then 
  \[
A(t)=\varphi\circ\psi\circ q(t), \quad t\in\rr.
\]
is the  solution of \eqref{mainode} in $C^0(\rr,\sltwo)\cap C^2(\rr,\mm^2)$ with initial data $(A_0,B_0)$.
Explicitly, we have
\[
A(t)=\tfrac1{\sqrt2}U \left(\half(q_3(t)+q_2(t))\right) \left(\rho(t) I+q_1(t) K\right)U \left(\half(q_3(t)-q_2(t))\right),
\]
with $\rho(t)=(2+q_1(t)^2)^{1/2}$.  There holds
\[
\half |A(t)|^2=1+q_1(t)^2,
\]
and the fluid domain $\omt=A(t)\bb$ is an ellipse with principal
  axes of lengths
\begin{equation}
\label{praxes}
\tfrac1{\sqrt2}\left[\left(\half |A(t)|^2+1\right)^{1/2}\pm\left(\half |A(t)|^2-1\right)^{1/2}\right],
\end{equation}
for all $t\in\rr$.
\end{theorem}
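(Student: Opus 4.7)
The plan is to assemble the statement from the chain of lemmas already established, treating the theorem essentially as a bookkeeping result that transfers information through the sequence of coordinate changes $(q,\xi)\xrightarrow{\Psi}(x,p)\xrightarrow{\Gamma}(x,g^{-1}(x)p)\xrightarrow{\Phi}(A,\dot A)$. First I would invoke Lemma \ref{can} to conclude that $(x,p)=\Psi(q,\xi)$ solves the original Hamiltonian system \eqref{Hamsys}, and then Lemma \ref{ham2main} to conclude that $A=\varphi\circ x=\varphi\circ\psi\circ q$ is a $C^2(\rr,\mm^2)\cap C^0(\rr,\sltwo)$ solution of the constrained ODE \eqref{mainode}.

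Next I would verify the initial conditions. By construction $\Phi\circ\Gamma\circ\Psi(q(0),\xi(0))=(A_0,B_0)$, and by the explicit form of $\Phi\circ\Gamma\circ\Psi$ computed in the proof of Lemma \ref{phigammapsi} (or equivalently by unwinding $\Phi\circ\Gamma(x,p)=(\varphi(x),D_x\varphi(x)\dot x)$ as in the proof of Theorem \ref{HamInv}), this unpacks to $A(0)=\varphi\circ\psi\circ q(0)=A_0$ and $\dot A(0)=B_0$. Uniqueness of global solutions from Theorem \ref{ivp} then identifies $A$ with the solution of the initial value problem \eqref{odes}.

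The main thing to check, and probably the only real obstacle, is that the trajectory $(q(t),\xi(t))$ of the reduced system \eqref{Hamsys2} actually remains in the domain $\rr^3_+\times\rr^3$ for all $t$; otherwise the polar substitution $\psi$ becomes singular and the formula $A=\varphi\circ\psi\circ q$ loses meaning. This is precisely where the hypothesis $X_2(A_0,B_0)\ne 0$ is used: by Lemma \ref{qxiinv1} the coordinate $\xi_2$ is conserved along \eqref{Hamsys2} and equals $X_2(A_0,B_0)\ne 0$, so Lemma \ref{lb} provides the uniform lower bound $q_1(t)^2\ge X_2^2/(2X_1)>0$, which both keeps $q(t)\in\rr^3_+$ and yields the global existence of $(q,\xi)$.

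Finally, the explicit polar form of $A(t)$, the identity $\half|A(t)|^2=1+q_1(t)^2$, and the principal-axes formula \eqref{praxes} follow by applying Lemma \ref{Aform1} pointwise at each $t\in\rr$ to the matrix $A(t)=\varphi\circ\psi\circ q(t)$, using that $\rho(t)=(2+q_1(t)^2)^{1/2}$ and that the singular values of $\rho I+q_1 K$ are $(\rho\pm q_1)/\sqrt 2$, combined with the fact from Lemma \ref{iso} that multiplication on either side by elements of $\sotwo$ does not affect the image being an ellipse of the stated axis lengths.
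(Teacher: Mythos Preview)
Your proposal is correct and follows essentially the same route as the paper: the paper's proof simply cites Lemmas \ref{lb}, \ref{can}, and \ref{ham2main} for the first assertion and Lemma \ref{Aform1} for the rest. Your version is more explicit in two places the paper leaves implicit, namely the verification of the initial conditions via the identity $\Phi\circ\Gamma(x,p)=(A,\dot A)$ together with uniqueness from Theorem \ref{ivp}, and the precise role of the hypothesis $X_2\ne0$ in keeping $q_1$ bounded away from zero; both additions are appropriate and strengthen the exposition.
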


\begin{proof}
That $A$ is a global solution of \eqref{mainode} follows from Lemmas \ref{lb}, \ref{can},  and \ref{mainham}.
The remaining statements follow from Lemma \ref{Aform1}.
\end{proof}

The Hamiltonian flow \eqref{Hamsys2} is easily understood.
Since $\xi_i=X_i(A_0,B_0)$, $i=2,3$, by Lemma \ref{qxiinv1}, we see from \eqref{Hamsys2} 
that the equations for the pair $(q_1,\xi_1)$
uncouple from the others.  The corresponding orbits are simply the level curves
of $\widetilde H$ in the half plane $\{(q_1,\xi_1):q_1>0\}$ with the known fixed values of $(\xi_2,\xi_3)$.
These level curves are described in  Theorem \ref{htlevelsets} and illustrated in Figures \ref{phasediagram1} and \ref{phasediagram2}.
Given $q_1$, the other coordinates $q_2, q_3$ are found by integration of the remaining equations
in \eqref{Hamsys2}.

\begin{theorem}
\label{htlevelsets}
 For each fixed $(\xi_2,\xi_3)\in\rr^2$ with $\xi_2\ne0$, 
  $\widetilde H$ is a strictly convex function of $(q_1,\xi_1)$
 on the set $\{(q_1,\xi_1):q_1>0\}$.  Its level sets are symmetric with respect to the $\xi_1$ axis.
 If $\kk>0$, $\widetilde H$ has a minimum value  at a unique point $(q_1(\xi_2,\xi_3),0)$, and all other
 level sets are smooth simple closed curves.  See Fig.\  \ref{phasediagram1}.  
 If $\kk=0$, the level sets of $\widetilde H$ are  smooth curves,
 bounded in $\xi_1$ and unbounded in $q_1$.  See Fig.\  \ref{phasediagram2}.  
 
\end{theorem}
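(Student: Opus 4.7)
My plan is to treat $\widetilde H$, with $(\xi_2,\xi_3)$ frozen as parameters, as a one-degree-of-freedom mechanical energy
\[
\widetilde H(q_1,\xi_1) = \tfrac12 T(q_1)\,\xi_1^2 + V(q_1),
\]
where $T(q_1)=(2+q_1^2)/(2(1+q_1^2))\in[\tfrac12,1]$ and
\[
V(q_1) = \frac{\xi_2^2}{2q_1^2} + \frac{\xi_3^2}{2(2+q_1^2)} + \kk(1+q_1^2),
\]
and to derive every assertion from the qualitative behaviour of $T$ and $V$ combined with a Hessian computation. Symmetry of the level sets about the $\xi_1$-axis is immediate because $\widetilde H$ depends on $\xi_1$ only through $\xi_1^2$.

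For strict convexity the natural step is the substitution $r = q_1^2$, which eliminates the chain-rule contributions that obscure the sign of $\partial^2_{q_1 q_1}\widetilde H$. A direct computation in the $(r,\xi_1)$ coordinates yields $\partial^2_{\xi_1\xi_1}\widetilde H = (2+r)/(2(1+r))$, $\partial^2_{rr}\widetilde H = \xi_1^2/(2(1+r)^3)+\xi_2^2/r^3+\xi_3^2/(2+r)^3$, and $\partial^2_{r\xi_1}\widetilde H = -\xi_1/(2(1+r)^2)$. The $\xi_1^2$-contributions to the determinant telescope via $(2+r)-1=1+r$, giving
\[
\det D^2_{(r,\xi_1)}\widetilde H = \frac{\xi_1^2}{4(1+r)^3}+\frac{(2+r)\xi_2^2}{2r^3(1+r)}+\frac{\xi_3^2}{2(1+r)(2+r)^2},
\]
which is strictly positive under the hypothesis $\xi_2\ne 0$. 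This gives strict convexity of $\widetilde H$ in $(r,\xi_1)$ on $\{r>0\}$, and because $q_1\mapsto q_1^2$ is a smooth diffeomorphism of $(0,\infty)$, sublevel sets in $(q_1,\xi_1)$ are diffeomorphic images of the (strictly convex) sublevel sets in $(r,\xi_1)$, so the connectedness and simple-connectedness needed below descend.

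Next I would identify critical points from $\partial_{\xi_1}\widetilde H = T\xi_1 = 0$, forcing $\xi_1=0$, together with $V'(q_1)=0$, which rearranges to $\xi_2^2/q_1^4+\xi_3^2/(2+q_1^2)^2 = 2\kk$. The left-hand side is strictly decreasing from $+\infty$ to $0$ on $(0,\infty)$, so there is a unique root $q_1^{\ast}(\xi_2,\xi_3) > 0$ iff $\kk>0$. For $\kk>0$, $V(q_1)\to\infty$ at both $0^+$ and $\infty$, and $T\ge\tfrac12$ makes $\widetilde H\to\infty$ as $|\xi_1|\to\infty$; so $\widetilde H$ is coercive on $\{q_1>0\}\times\rr$. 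Sublevel sets are therefore compact, and by the convexity-in-$(r,\xi_1)$ above, connected and simply connected; for any $E>\widetilde H(q_1^{\ast},0)$ the value is regular, so $\{\widetilde H=E\}$ is a smooth compact connected $1$-manifold, hence a single simple closed curve (Figure~\ref{phasediagram1}).

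For $\kk=0$, $V$ decreases strictly from $+\infty$ to $0$, so $\widetilde H$ has no critical points and every $E>0$ is regular. The inequality $T\ge\tfrac12$ gives $\xi_1^2\le 4E$, whence boundedness in $\xi_1$; and $V(q_1)\to 0$ ensures that $\widetilde H=E$ is solvable for $\xi_1$ at every sufficiently large $q_1$, so the level set is unbounded in $q_1$ and has the shape of Figure~\ref{phasediagram2}. The main technical obstacle is the convexity step: in the variable $q_1$ itself the quantity $T''(q_1) = (3q_1^2-1)/(1+q_1^2)^3$ changes sign, so $\partial^2_{q_1 q_1}\widetilde H = T''(q_1)\xi_1^2/2 + V''(q_1)$ is not manifestly positive for large $|\xi_1|$; the passage to $r=q_1^2$ is exactly what neutralises $T''$ and factors the Hessian determinant into manifestly non-negative pieces, which is why it is indispensable here.
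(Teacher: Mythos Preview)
The paper states this theorem without proof, so there is nothing to compare your argument against directly.  Your treatment of the level-set structure is sound: the Hessian computation in the variable $r=q_1^2$ is correct, and strict convexity of $\widetilde H$ in $(r,\xi_1)$, together with coercivity (for $\kk>0$) or the strict monotone decay of $V$ (for $\kk=0$), does yield the claimed description of the level curves.  The passage from convex sublevel sets in $(r,\xi_1)$ to simply connected sublevel sets in $(q_1,\xi_1)$ via the diffeomorphism $q_1\mapsto q_1^2$ is exactly the right mechanism.

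One point deserves sharper language.  The theorem asserts that $\widetilde H$ is strictly convex \emph{as a function of $(q_1,\xi_1)$}, and you do not prove this---you prove convexity only in $(r,\xi_1)$.  In fact the literal claim is false.  A short computation gives
\[
\tfrac12\,T(q_1)T''(q_1)-T'(q_1)^2=\frac{3q_1^2-2}{4(1+q_1^2)^3},
\]
so that
\[
\det D^2_{(q_1,\xi_1)}\widetilde H=\frac{(3q_1^2-2)\,\xi_1^2}{4(1+q_1^2)^3}+T(q_1)V''(q_1),
\]
which tends to $-\infty$ as $|\xi_1|\to\infty$ whenever $q_1^2<2/3$.  (Equivalently, the diagonal entry $\partial^2_{q_1q_1}\widetilde H=\tfrac12T''(q_1)\xi_1^2+V''(q_1)$ is already negative there for large $|\xi_1|$.)  So your substitution does not merely ``neutralise'' an inconvenient sign that could have been handled another way; it replaces a false assertion with the correct one.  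What you have actually established is that $\widetilde H$ is strictly convex in $(q_1^2,\xi_1)$, which is precisely what is needed---and all that is true---for the conclusions about the level sets.  You should say this explicitly rather than leaving the impression that the stated convexity in $(q_1,\xi_1)$ has been verified.
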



\begin{figure}[h]
\caption{Typical level curves of $\widetilde H$,  in the case  $\kk>0$.}
 \label{phasediagram1}
\ \\
\setlength\unitlength{1mm}
\begin{center}

\begin{overpic}[scale=.75]
{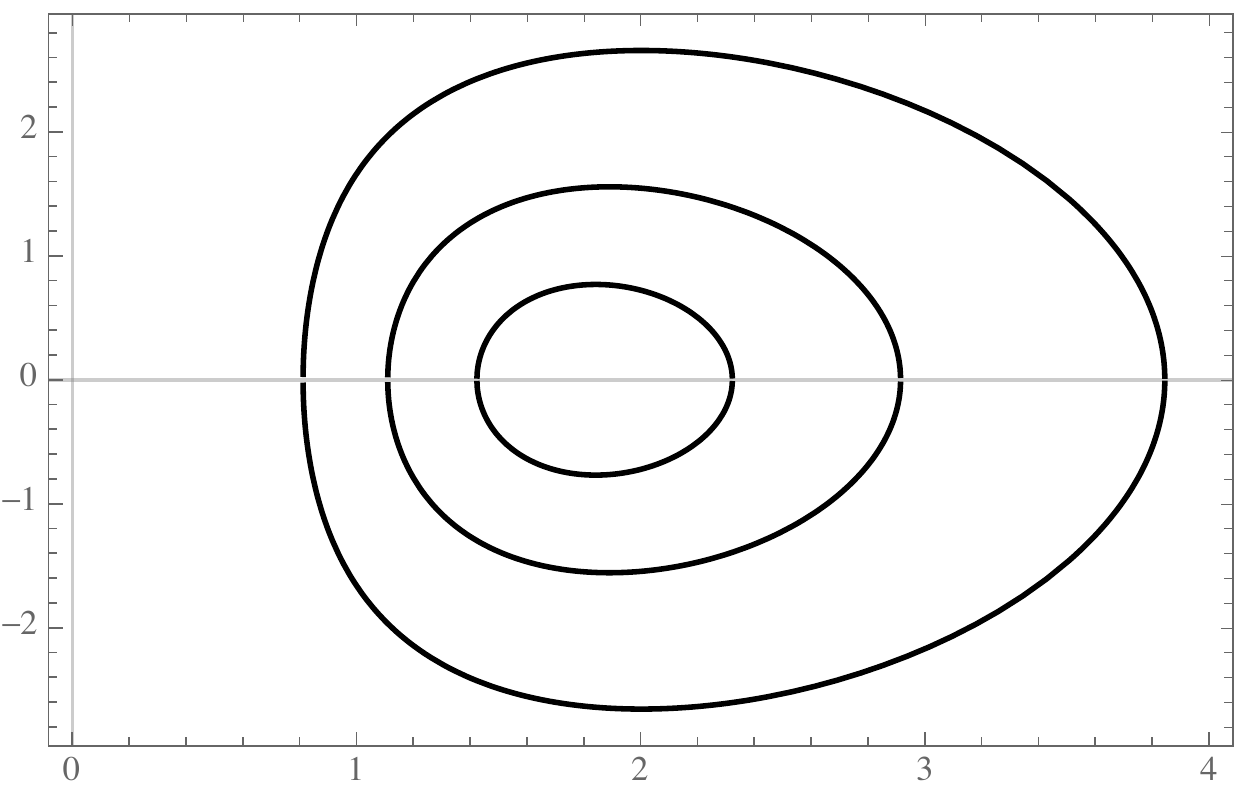}
\put(46,33.2){\circle*{1.5}}
\linethickness{0.5mm}
\put (50,59.6){\vector(1,0){2}}
\put (50,48.6){\vector(1,0){2}}
\put (50,40.6){\vector(10,-1){2}}
\end{overpic}
\end{center}
\end{figure}


\begin{figure}[h]
\caption{Typical level curves of $\widetilde H$,  in the case  $\kk=0$.}
 \label{phasediagram2}
\ \\
\setlength\unitlength{1mm}
\begin{center}

\begin{overpic}[scale=.75]
{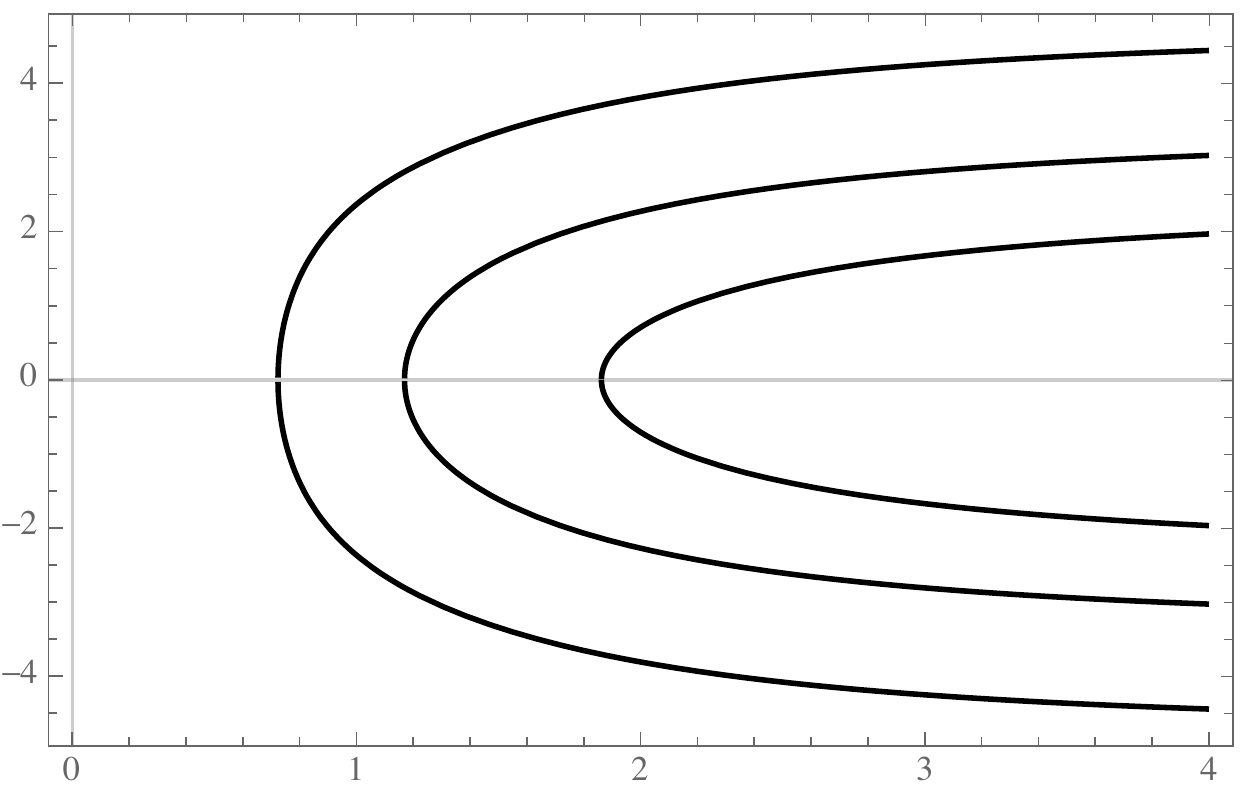}
\linethickness{0.5mm}
\put (70,58){\vector(15,1){2}}
\put (70,49.4){\vector(20,1){2}}
\put (70,42.5){\vector(10,1){2}}
\end{overpic}
\end{center}
\end{figure}

\section{Reduction to the phase plane in the case $X_2=0$}
\label{Hamkz}

When $X_2=0$, we shall rely on the fact that for any fixed unit vector $\bar v\in\rr^2$, the
set 
\[
\{(x,p)\in\rr^3\times\rr^3:\ip{\bx}{\bar v}=\ip{\bp}{\bar v}=0\}
\]
is invariant under the flow of \eqref{Hamsys}.

\begin{definition}
Define $\Psi_0:\rr^3\times\rr^3\to\rr^3\times\rr^3$ by
$\Psi_0(q,\xi)=(\psi(q),\psi(\xi))$, where $\psi$ was given in Definition \ref{psidef}.
\end{definition}

\begin{definition}
\label{Ham3}
Define the Hamiltonian
\[
 H_0(q,\xi)=\left.H\circ\Psi_0(q,\xi)\right|_{q_2=\xi_2}
=\half\left(\frac{(2+q_1^2)\xi_1^2}{2(1+q_1^2)}+\frac{\xi_3^2}{2+q_1^2}\right)+\kk(1+q_1^2), \quad (q,\xi)\in\rr^3\times\rr^3.
\]
\end{definition}

\begin{lemma}
\label{phigammapsizero}
If $(A,B)\in\dd$ and $X_2(A,B)=0$, then there exists $(q,\xi)\in\rr^3\times\rr^3$
with $q_2=\xi_2$ such that
\[
(A,B)=\Phi\circ\Gamma\circ\Psi_0(q,\xi).
\]
In this case, we have
\[
X_1(A,B)=H_0(q,\xi),\quad X_2(A,B)=0, \isp{and} X_3(A,B)=\xi_3.
\]
\end{lemma}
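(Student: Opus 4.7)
The plan is to construct $(q,\xi)$ by splitting coordinates as $\rr^3=\rr^2\times\rr$ and putting the two planar pieces $\bx$ and $\bp$ into a common polar form; everything else is bookkeeping using identities already proved.

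First, by Lemmas \ref{bigphionto} and \ref{gammacoord}, the composition $\Phi\circ\Gamma:\rr^3\times\rr^3\to\dd$ is surjective, so I may choose some $(x,p)\in\rr^3\times\rr^3$ with $\Phi\circ\Gamma(x,p)=(A,B)$. By Lemma \ref{gammainv}, the hypothesis $X_2(A,B)=0$ translates directly into
\[
\ip{Z\bx}{\bp}=x_1p_2-x_2p_1=0,
\]
which is precisely the statement that the vectors $\bx,\bp\in\rr^2$ are linearly dependent.

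The crux is then to assign $\bx$ and $\bp$ a \emph{common} polar angle. When $\bx\ne 0$, this angle $\theta$ is determined by $\bx/|\bx|$, and linear dependence gives a scalar $s\in\rr$ (allowed to be negative) with $\bp=s(\cos\theta,\sin\theta)$. When $\bx=0$, I take $\theta$ to be the argument of $\bp$, or anything at all if $\bp=0$. In every case set
\[
q=(|\bx|,\theta,x_3),\qquad \xi=(s,\theta,p_3),
\]
with the convention $s=0$ if $\bp=0$. Then $q_2=\xi_2=\theta$ by construction, $\psi(q)=x$, and $\psi(\xi)=p$, so $\Psi_0(q,\xi)=(x,p)$ and therefore $\Phi\circ\Gamma\circ\Psi_0(q,\xi)=(A,B)$ as required. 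The three invariants are then read off from Lemma \ref{gammainv} applied at $(x,p)=\Psi_0(q,\xi)$: $X_3(A,B)=p_3=\xi_3$ and $X_2(A,B)=\ip{Z\bx}{\bp}=0$ are immediate, and $X_1(A,B)=H(x,p)$. To get $H(x,p)=H_0(q,\xi)$, I substitute $|\bx|^2=q_1^2$, $|\bp|^2=\xi_1^2$, $\ip{Z\bx}{\bp}=0$, $\rho(x)^2=2+q_1^2$, and $|\varphi(x)|^2=2(1+q_1^2)$ into the explicit formula for $H$ from Section \ref{HamRed}; the result matches Definition \ref{Ham3} term by term.

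The only real subtlety is the degenerate case $\bx=0$ (or $\bp=0$), where the common polar angle is not uniquely determined by the vectors. Since the lemma requires only the relation $q_2=\xi_2$ rather than a prescribed value of either coordinate, this ambiguity is in fact a freedom that lets me align the two angles in every case; no obstruction arises.
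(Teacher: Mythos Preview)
Your proof is correct and follows essentially the same approach as the paper: pull $(A,B)$ back to $(x,p)$ via the surjectivity of $\Phi\circ\Gamma$, use $X_2=0$ to see that $\bx$ and $\bp$ are linearly dependent, write both along a common unit direction to obtain $(q,\xi)$ with $q_2=\xi_2$, and then read off the invariants from Lemma \ref{gammainv} and the explicit formula for $H$. One tiny wording gap: when $\bx=0$ and $\bp\ne 0$ you never explicitly set $s$, but it is implicitly $|\bp|$ since you chose $\theta$ to be the argument of $\bp$; with that understood, $\psi(\xi)=p$ holds in every case and the argument goes through.
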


\begin{proof}
Let $(A,B)\in\dd$ with $X_2(A,B)=0$.  By Lemma \ref{gammacoord}, we can choose
$(x,p)\in\rr^3\times\rr^3$ such that 
\[
\Phi\circ\Gamma(x,p)=(A,B).
\]
By Lemma  \ref{gammainv}, we have $x_1p_2-x_2p_1=0$.  This says that the vectors
$\bx$ and $\bp$ are dependent.  Therefore, we can find a single unit vector
$\bar\omega\in\rr^2$  and scalars $q_1, \xi_1\in\rr$ such that
\[
\bx=q_1\bar\omega\isp{and}\bp=\xi_1\bar\omega.
\]
We can express $ \bar\omega$ in the form
\[
\bar\omega=(\cos q_2, \sin q_2),
\]
for some $q_2\in\rr$.  Now, if we define
\[
q=(q_1,q_2,x_3)\isp{and} \xi=(\xi_1,q_2,p_3),
\]
then $\Psi_0(q,\xi)=(x,p)$ and $q_2=\xi_2$.
It  follows that $(A,B)=\Phi\circ\Gamma\circ\Psi_0(q,\xi)$.

The form of the nonzero invariants follows from Lemma \ref{gammainv}
and Definition \ref{Ham3}.
\end{proof}

Although $H_0$ is independent of $q_2,q_3,\xi_2$, we shall still regard it as a function on $\rr^3\times\rr^3$.
As such, the corresponding Hamiltonian system takes the form
\begin{equation}
\begin{aligned}
\label{Hamsys3}
&\dot q_1=\frac{(2+q_1^2)\xi_1}{2(1+q_1^2)}
&\qquad&\dot \xi_1=\left(\frac{\xi_1^2}{2(1+q_1^2)^2}+\frac{\xi_3^2}{(2+q_1^2)^2}-2\kk \right)q_1\\
&\dot q_2=0&&\dot \xi_2=0\\
&\dot q_3=\frac{\xi_3}{2+q_1^2}&&\dot\xi_3=0.
\end{aligned}
\end{equation}
Notice that \eqref{Hamsys3} is formally obtained from \eqref{Hamsys2} by deleting the terms which are singular at $q_1=0$,
although here we have $q_1\in\rr$ rather than $q_1>0$.

\begin{lemma}
For any  $\kk\ge0$ and any initial data $(q(0),\xi(0))\in\rr^3\times\rr^3$,
the system \eqref{Hamsys3} has a unique global solution $(q,\xi)\in C^1(\rr,\rr^3\times\rr^3)$.
\end{lemma}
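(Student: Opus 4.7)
The plan is to combine Picard--Lindel\"of for local existence with \emph{a priori} bounds coming from the obvious conservation laws of \eqref{Hamsys3} to rule out finite-time blowup.

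First, I would observe that the right-hand side of \eqref{Hamsys3} defines a $C^\infty$ vector field on all of $\rr^3\times\rr^3$: the denominators $1+q_1^2$ and $2+q_1^2$ are bounded below by $1$ and $2$ respectively, so no singularities of the kind seen in \eqref{Hamsys2} (where $q_1=0$ was excluded) appear here. By the standard Picard existence and uniqueness theorem, for any initial data $(q(0),\xi(0))\in\rr^3\times\rr^3$ there is a unique solution $(q,\xi)\in C^1(\ii,\rr^3\times\rr^3)$ on some maximal open interval $\ii\subset\rr$.

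Next I extract the conserved quantities directly from \eqref{Hamsys3}. Inspection of the system yields immediately that $\xi_2$, $\xi_3$, and $q_2$ are constant along any solution, and $H_0(q(t),\xi(t))$ is conserved as well since it is the Hamiltonian. Denote $E=H_0(q(0),\xi(0))$. To bound $\xi_1$, note that the rational function $f(s)=(2+s)/(2(1+s))$ satisfies $f'(s)=-1/[2(1+s)^2]<0$, hence $\tfrac12\le f(q_1^2)\le 1$ for all $q_1\in\rr$. From the definition of $H_0$ this gives
\[
\tfrac14\xi_1(t)^2\le \tfrac12 f(q_1(t)^2)\,\xi_1(t)^2\le H_0(q(t),\xi(t))=E,
\]
so $|\xi_1(t)|\le 2\sqrt{E}$ on $\ii$, independent of $\kk$.

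It remains to bound $q_1$ and $q_3$. If $\kk>0$, then $\kk q_1(t)^2\le E$ directly from $H_0$, so $q_1$ is uniformly bounded. If $\kk=0$, the first equation of \eqref{Hamsys3} and $f(q_1^2)\le 1$ give $|\dot q_1(t)|\le |\xi_1(t)|\le 2\sqrt{E}$, so $q_1$ grows at most linearly in time. For $q_3$, the bound $|\dot q_3(t)|=|\xi_3|/(2+q_1(t)^2)\le |\xi_3(0)|/2$ holds on $\ii$, so $q_3$ also grows at most linearly. Combined with the boundedness of $\xi_1,\xi_2,\xi_3$ and the constancy of $q_2$, this shows $(q(t),\xi(t))$ cannot escape every compact subset of $\rr^3\times\rr^3$ in finite time. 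The standard continuation argument then forces $\ii=\rr$.

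I do not anticipate any real obstacle here: once one notices that the coefficient $f(q_1^2)$ of $\xi_1^2$ in $H_0$ has a strictly positive lower bound, conservation of $H_0$ alone controls $\xi_1$ on both sides $\kk>0$ and $\kk=0$, and the remaining variables are trivially controlled by integration. The only mild care is to separate the two cases for $q_1$; the $\kk=0$ case uses the argument already seen in the proof of Theorem \ref{ivp}.
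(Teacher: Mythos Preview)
Your argument is correct and follows essentially the same approach as the paper, which dispatches the lemma in a single sentence by invoking conservation of $H_0$ along the flow of \eqref{Hamsys3}. Your version is simply a more detailed unpacking of that one line, in particular making explicit the $\kk=0$ case (where $H_0$ does not directly bound $q_1$) via the linear-growth argument already used in Theorem~\ref{ivp}.
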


\begin{proof}
The follows by the conservation of the Hamiltonian $H_0$ along the flow of \eqref{Hamsys3}.
\end{proof}

\begin{theorem}
\label{mainthm2}
Fix $\kk\ge0$.
Let $(A_0,B_0)\in\dd$ with $X_2(A_0,B_0)=0$.  Choose $(q(0),\xi(0))\in\rr^3\times\rr^3$ with $q_2(0)=\xi_2(0)$
such that 
$\Phi\circ\Gamma\circ\Psi_0(q(0),\xi(0))=(A_0,B_0)$.  Let 
$(q,\xi)\in C^1(\rr,\rr^3\times\rr^3)$ be the global solution of \eqref{Hamsys3} with initial data $(q(0),\xi(0))$.
  Then
\[
A(t)=\varphi\circ\psi\circ q(t), \quad t\in\rr
\]
is  the solution of \eqref{mainode} in $C^0(\rr,\sltwo)\cap C^2(\rr,\mm^2)$ with initial data $(A_0,B_0)$.
Explicitly, we have
\[
A(t)=\tfrac1{\sqrt2}U \left(\half(q_3(t)+q_2(0))\right) \left(\rho(t) I+q_1(t) K\right)U \left(\half(q_3(t)-q_2(0))\right),
\]
with $\rho(t)=(2+q_1(t)^2)^{1/2}$.  There holds
\[
\half |A(t)|^2=1+q_1(t)^2,
\]
and the fluid domain $\omt=A(t)\bb$ is an ellipse with 
principal axes of lengths given in \eqref{praxes}.
\end{theorem}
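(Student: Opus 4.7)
The plan is to mirror the structure of the proof of Theorem \ref{mainthm1}: use Lemma \ref{phigammapsizero} to pick initial data $(q(0),\xi(0))$, let $(q,\xi)$ be the global solution of \eqref{Hamsys3}, lift it via $(x,p)=\Psi_0(q,\xi)$ to a solution of the original Hamiltonian system \eqref{Hamsys}, apply Lemma \ref{ham2main} to obtain a solution $A=\varphi\circ x=\varphi\circ\psi\circ q$ of \eqref{mainode}, and finally read off the explicit form from Lemma \ref{Aform1}. The essential difference from Theorem \ref{mainthm1} is that $\Psi_0$ is not a canonical transformation, so the reduction cannot be invoked abstractly; I will verify it by direct substitution.

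The key observation is that \eqref{Hamsys3} makes both $q_2$ and $\xi_2$ constants of motion, so the initial normalization $q_2(0)=\xi_2(0)$ propagates to $q_2(t)=\xi_2(t)$ for all $t$. Setting $\bar\omega=(\cos q_2(0),\sin q_2(0))$, one gets $\bx(t)=q_1(t)\bar\omega$ and $\bp(t)=\xi_1(t)\bar\omega$, so $\ip{Z\bx(t)}{\bp(t)}=0$ because $Z\bar\omega\perp\bar\omega$. Substituting $(x,p)=\Psi_0(q,\xi)$ into \eqref{Hamsys}, the $Z\bx$ and $Z\bp$ terms drop out; using $|\varphi|^2=2(1+q_1^2)$, $\rho^2=2+q_1^2$, $|\bp|^2=\xi_1^2$, the $\bx$-equation reduces to $\dot q_1\bar\omega=\tfrac{(2+q_1^2)\xi_1}{2(1+q_1^2)}\bar\omega$, matching the first equation of \eqref{Hamsys3}; the $x_3$, $\bp$, and $p_3$ equations match analogously. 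Hence $(x,p)$ solves \eqref{Hamsys}, and by Lemma \ref{ham2main}, $A=\varphi\circ\psi\circ q\in C^0(\rr,\sltwo)\cap C^2(\rr,\mm^2)$ solves \eqref{mainode}. The initial condition $(A(0),\dot A(0))=\Phi\circ\Gamma\circ\Psi_0(q(0),\xi(0))=(A_0,B_0)$ holds by construction.

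For the explicit form, I would apply Lemma \ref{Aform1} with the triple $(q_1(t),q_2(0),q_3(t))$, exploiting the conservation $q_2(t)\equiv q_2(0)$; this yields the stated factorization of $A(t)$, the identity $\tfrac12|A(t)|^2=1+q_1(t)^2$, and the principal-axis lengths \eqref{praxes} for $\omt$. The main obstacle is resisting the temptation to reuse the canonical-transformation argument from Theorem \ref{mainthm1}; because $\Psi_0$ collapses two distinct angular variables into one, canonicity genuinely fails and the verification must be pushed through by hand on the invariant submanifold $\{q_2=\xi_2\}$, where $X_2\equiv 0$ by Lemma \ref{phigammapsizero}. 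Global existence of $(q,\xi)$ is not an issue, as it was established from conservation of $H_0$ in the preceding lemma.
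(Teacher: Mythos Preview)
Your proposal is correct and follows essentially the same route as the paper: propagate $q_2(t)=\xi_2(t)\equiv q_2(0)$ from \eqref{Hamsys3}, set $(x,p)=\Psi_0(q,\xi)$, observe $\ip{Z\bx}{\bp}=0$ and the reductions $\rho(x)^2=2+q_1^2$, $|\varphi(x)|^2=2(1+q_1^2)$, $|\bp|^2=\xi_1^2$, verify by direct substitution that $(x,p)$ solves \eqref{Hamsys}, invoke Lemma~\ref{ham2main}, and finish with Lemma~\ref{Aform1}. Your explicit remark that $\Psi_0$ is not canonical and that the lift must therefore be checked by hand is exactly the point that distinguishes this proof from that of Theorem~\ref{mainthm1}.
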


\begin{proof}
Since $q_2(0)=\xi_2(0)$, the solution $(q,\xi)$ of \eqref{Hamsys3} satisfies 
\[
q_2(t)=q_2(0)=\xi_2(0)=\xi_2(t), \isp{for all} t\in\rr.
\]
Let $\bar\omega= (\cos q_2(0),\sin q_2(0))$.
Define $(x,p)=\Psi_0(q,\xi)$.  Then $(x,p)\in C^1(\rr,\rr^3\times\rr^3)$ and
\begin{align*}
&x(t)=(\bx(t),x_3(t))=(q_1(t)\bar\omega,q_3(t)),\\
& p(t)=(\bp(t),p_3(t))=(\xi_1(t)\bar\omega,\xi_3(t)),
\end{align*}
for all $t\in\rr$.  According to our definitions, we find
\[
\ip{Z\bx}{\bp}=0,\quad \rho(x)^2=2+q_1^2,\quad |\varphi(x)|^2=2(1+q_1^2),
\isp{and} |\bp|^2=\xi_1^2,
\]
where for notational convenience we have suppressed the time argument.

It is now straightforward to verify that $(x,p)$
  is a global solution
of \eqref{Hamsys} with initial data $\Psi_0(q(0),\xi(0))$.
It follows from Lemma \ref{ham2main} that 
\[
A=\varphi\circ x =\varphi\circ\psi\circ q\in C(\rr,\sltwo)\cap C^2(\rr,\mm^2)
\]
is a solution of \eqref{mainode}.

The remaining statements follow from Lemma \ref{Aform1}.
\end{proof}

 Again, we can understand the flow of \eqref{Hamsys3} by looking at the level sets of $H_0$ with $\xi_3$ fixed.
 These are described in  Theorem \ref{hzlevelsets}, 
 and illustrated in Figures \ref{phasediagram3}, \ref{phasediagram4}, \ref{phasediagram5},
 and \ref{phasediagram6}.

  \begin{theorem}
  \label{hzlevelsets}
Fix $\kk\ge0$ and $\xi_3\in\rr$.  As a function of $(q_1,\xi_1)\in\rr^2$, 
the level curves of $H_0$  are symmetric with respect to the coordinate axes.
They have the following structure.

If $0<\kk<\tfrac18 \xi_3^2$, then $H_0$ has three nondegenerate critical points:
a saddle point at $(0,0)$ and two centers 
 $(\pm q_1(\xi_3),0)$
 where $H_0$ achieves its minimum value.
Each level set  when $\min H_0<H_0<H_0(0,0)$ 
corresponds to a pair of closed orbits.
Each level set when $H_0>H_0(0,0)$ 
corresponds to single closed orbit.
The level set when $H_0=H_0(0,0)$ corresponds to the union of an equilibrium point at $(0,0)$ and two homoclinic orbits.
See Fig.\  \ref{phasediagram3}.

If $\kk>0$ and $\kk\ge\tfrac18\xi_3^2$, then $H_0$ has a single critical point at $(0,0)$ where
it achieves its minimum value.  Each level set  when $H_0>\min H_0$
 corresponds to a closed orbit.  See Fig.~\ref{phasediagram4}.
 
 If $\kk=0$ and $\xi_3\ne0$, then $H_0$ has a saddle point at $(0,0)$.  
 The level set when $H=H_0(0,0)$ corresponds to an equilibrium solution and
 four  orbits, bounded in $\xi_1$ and unbounded in $q_1$, comprising the stable and unstable manifolds
 of the eqilibrium.
 Each level set for $H_0\ne H_0(0,0)$ corresponds to a  pair of orbits, bounded in $\xi_1$ and unbounded in $q_1$.
 See Fig.\  \ref{phasediagram5}.
 
 If $\kk=0$ and $\xi_3=0$, then $\min H_0=0$, and the minimum level set
 corresponds to a line of equilibria $\xi_1=0$.  Each level set for $H_0>0$
 corresponds to a  pair of orbits, bounded in $\xi_1$ and unbounded in $q_1$.
 See Fig.\  \ref{phasediagram6}.
\end{theorem}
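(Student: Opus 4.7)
My plan is to treat $H_0(q_1, \xi_1)$, with $\xi_3$ and $\kk$ fixed, as a smooth function on $\rr^2$ and read off the topology of its level sets from its critical points and asymptotic behavior. Because $H_0$ depends on $q_1$ and $\xi_1$ only through the squares $q_1^2$ and $\xi_1^2$, its level sets are symmetric about both coordinate axes. To locate critical points, $\partial_{\xi_1}H_0 = \tfrac{(2+q_1^2)\xi_1}{2(1+q_1^2)}$ forces $\xi_1 = 0$, and on that axis $\partial_{q_1}H_0 = q_1\bigl(2\kk - \xi_3^2/(2+q_1^2)^2\bigr)$ vanishes either at $q_1 = 0$ or on the symmetric pair $(\pm q_1(\xi_3),0)$ defined by $(2+q_1(\xi_3)^2)^2 = \xi_3^2/(2\kk)$, which exists precisely when $\kk > 0$ and $\xi_3^2 > 8\kk$.

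Next I would classify these critical points via the Hessian. A direct computation at $(0,0)$ gives a diagonal matrix with entries $(8\kk - \xi_3^2)/4$ and $1$, so the origin is a strict local minimum when $8\kk > \xi_3^2$, a hyperbolic saddle when $8\kk < \xi_3^2$, and degenerate when $8\kk = \xi_3^2$; in the last case the Taylor expansion of $H_0(q_1,0)$ in $q_1$ carries a positive $q_1^4$ coefficient (equal to $\xi_3^2/16$), so $(0,0)$ remains a strict local minimum. At the off-origin critical points, substituting the defining identity $\xi_3^2 = 2\kk(2+q_1^2)^2$ into the second derivative along $\xi_1 = 0$ yields $\partial_{q_1}^2H_0 = 8\kk q_1^2/(2+q_1^2) > 0$, so these are nondegenerate local minima (centers of the flow).

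I would then convert these local facts into the global level set picture in each of the four regimes. When $\kk > 0$ the bound $H_0 \ge \xi_1^2/4 + \kk q_1^2$ (from the coefficient $\tfrac{2+q_1^2}{4(1+q_1^2)} \ge \tfrac{1}{4}$) makes $H_0$ proper, so Morse theory applies: if $8\kk \ge \xi_3^2$ the unique critical point is the global minimum at $(0,0)$, so every noncritical level set is a single smooth Jordan curve, giving Fig.~\ref{phasediagram4}; if $8\kk < \xi_3^2$ the standard double-well phase portrait, with a saddle at $(0,0)$ lying between two equal-height symmetric minima, yields pairs of closed orbits below the saddle value, a symmetric figure-eight (saddle plus two homoclinic loops) at the saddle value, and single closed orbits encircling all three critical points above, giving Fig.~\ref{phasediagram3}. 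When $\kk = 0$ properness fails, but $H_0$ can be solved explicitly for $\xi_1^2$ in terms of $q_1$ and the level value $h$:
\[
\xi_1^2 = \frac{4(1+q_1^2)}{2+q_1^2}\Bigl(h - \frac{\xi_3^2}{2(2+q_1^2)}\Bigr).
\]
For $\xi_3 \ne 0$ this yields Fig.~\ref{phasediagram5}: when $h > \xi_3^2/4$ the right-hand side is positive for every $q_1 \in \rr$, producing two unbounded curves (upper and lower); when $h = \xi_3^2/4$ it vanishes only at $q_1 = 0$, giving the equilibrium together with four unbounded arcs comprising the stable and unstable manifolds of the saddle; when $0 < h < \xi_3^2/4$, positivity forces $|q_1| \ge q_1^*(h) > 0$ and the level set splits into two disjoint unbounded components in the half-planes $q_1 > 0$ and $q_1 < 0$. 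For $\xi_3 = 0$, $H_0 = (2+q_1^2)\xi_1^2/(4(1+q_1^2))$ vanishes precisely on the line $\{\xi_1 = 0\}$, which consists of equilibria since the first equation of \eqref{Hamsys3} gives $\dot q_1 = 0$ there, and for $h > 0$ the same formula gives a single pair of unbounded curves, producing Fig.~\ref{phasediagram6}.

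The main obstacle is the double-well case $0 < \kk < \xi_3^2/8$: identifying the critical level $\{H_0 = H_0(0,0)\}$ as exactly the saddle together with two homoclinic loops, and showing that the number of connected components jumps correctly as $h$ passes through the saddle value, requires combining the local Morse lemma at the saddle with the global properness of $H_0$ and the two coordinate-axis symmetries established at the start.
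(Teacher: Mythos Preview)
Your proposal is correct. The paper actually states Theorem~\ref{hzlevelsets} without proof, relying on the accompanying phase portraits (Figures~\ref{phasediagram3}--\ref{phasediagram6}) as self-evident; your argument supplies exactly the elementary phase-plane analysis that the figures are meant to illustrate. The computations you give---the critical-point equations, the Hessian entries $(8\kk-\xi_3^2)/4$ and $1$ at the origin, the quartic expansion in the borderline case $8\kk=\xi_3^2$, the properness bound $H_0\ge \xi_1^2/4+\kk q_1^2$ for $\kk>0$, and the explicit formula for $\xi_1^2$ when $\kk=0$---all check out, and the double-well analysis in the regime $0<\kk<\tfrac18\xi_3^2$ is the standard Morse-theoretic picture for a proper function with two symmetric minima and a saddle.
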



\begin{figure}[h]
\caption{Typical level curves of $ H_0$,  in the case  $0<\kk<\tfrac18X_3^2$.}
 \label{phasediagram3}
\ \\
\setlength\unitlength{1mm}
\begin{center}

\begin{overpic}[scale=.75]
{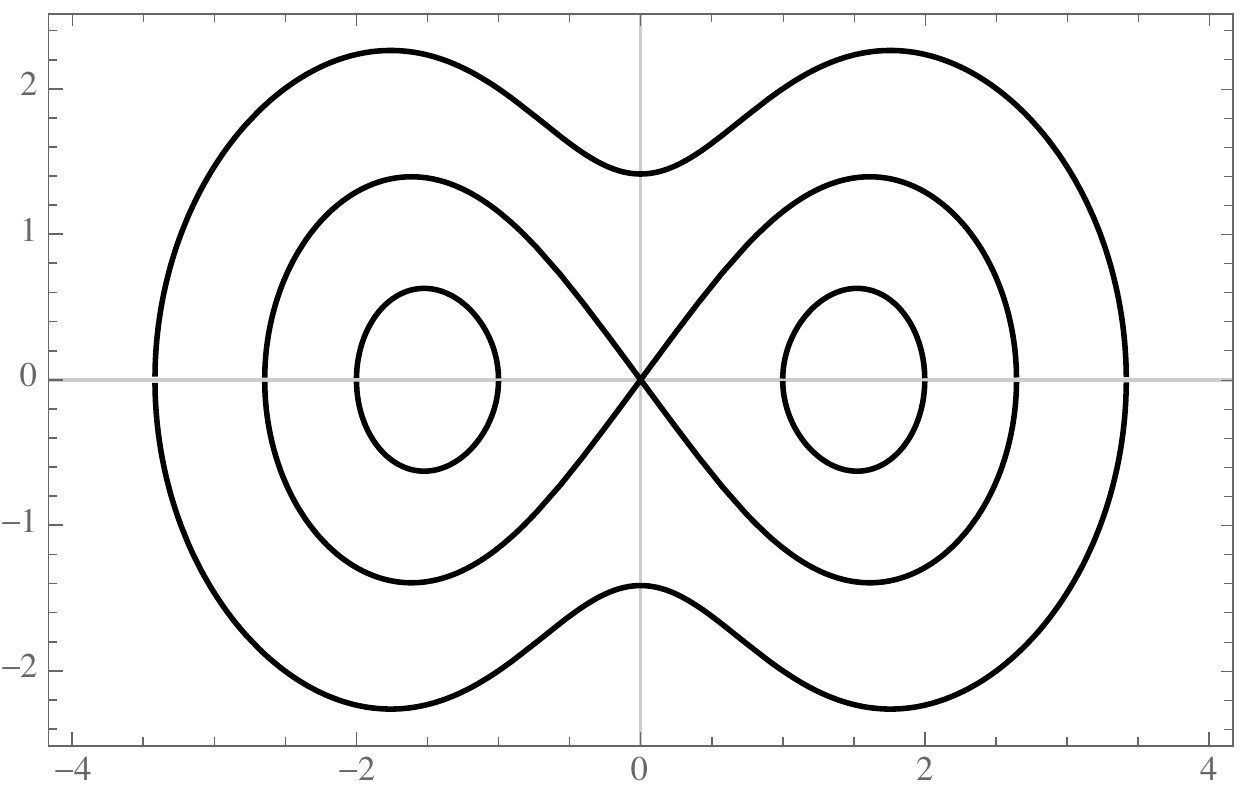}
\put(33.5,33.2){\circle*{1.5}}
\put(69.1,33.2){\circle*{1.5}}
\put(51.3,33.2){\circle*{1.5}}
\linethickness{0.5mm}
\put (50.6,49.9){\vector(10,-1){2}}
\put (69,49.3){\vector(5,1){2}}
\put (34,49.3){\vector(15,-1){2}}
\put (69,40.3){\vector(15,-1){2}}
\put (34,40.3){\vector(15,-1){2}}
\end{overpic}
\end{center}
\end{figure}


\begin{figure}[h]
\caption{Typical level curves of $ H_0$,  in the case  $\kk>0$, $\kk\ge\tfrac18X_3^2$.}
 \label{phasediagram4}
\ \\
\setlength\unitlength{1mm}
\begin{center}

\begin{overpic}[scale=.75]
{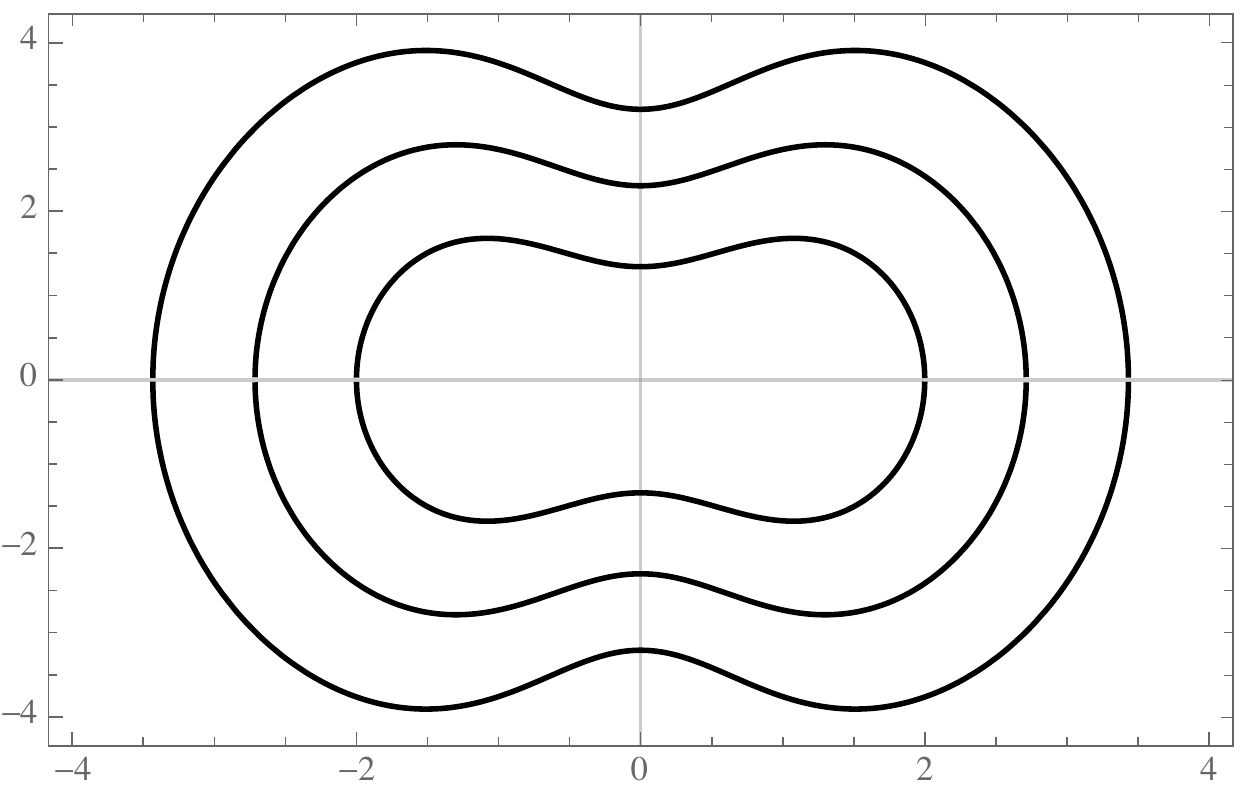}
\linethickness{0.5mm}
\put (51.4,54.8){\vector(1,0){2}}
\put (51.4,48.6){\vector(1,0){2}}
\put (51.4,42.1){\vector(1,0){2}}\put(51.3,33.3){\circle*{1.5}}
\end{overpic}
\end{center}
\end{figure}


\begin{figure}[h]
\caption{Typical level curves of $ H_0$,  in the case  $\kk=0$, $\xi_3\ne0$.}
 \label{phasediagram5}
\ \\
\setlength\unitlength{1mm}
\begin{center}

\begin{overpic}[scale=.75]
{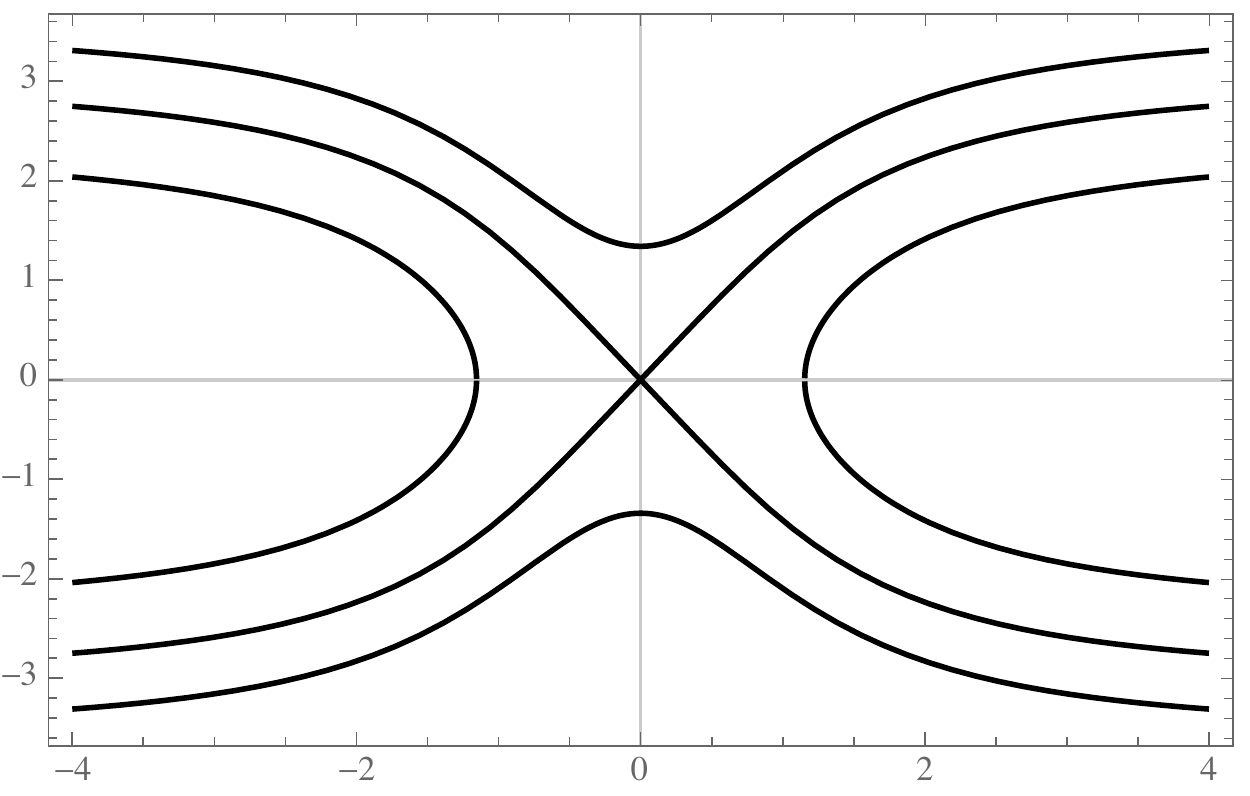}
\linethickness{0.5mm}
\put (51,44){\vector(1,0){2}}
\put (85,53.7){\vector(6,1){2}}
\put (85,47.7){\vector(6,1){2}}
\put (85,12.7){\vector(-6,1){2}}
\put (18,53.7){\vector(6,-1){2}}
\put (18,47.7){\vector(6,-1){2}}
\put (18,12.7){\vector(-6,-1){2}}
\put (51,22.5){\vector(-1,0){2}}
\put(51.3,33.3){\circle*{1.5}}
\end{overpic}
\end{center}
\end{figure}


\begin{figure}[h]
\caption{Typical level curves of $ H_0$,  in the case  $\kk=0$, $\xi_3=0$.
The $\xi_1$-axis is a line of equilibrium points.}
 \label{phasediagram6}
\ \\
\setlength\unitlength{1mm}
\begin{center}

\begin{overpic}[scale=.75]
{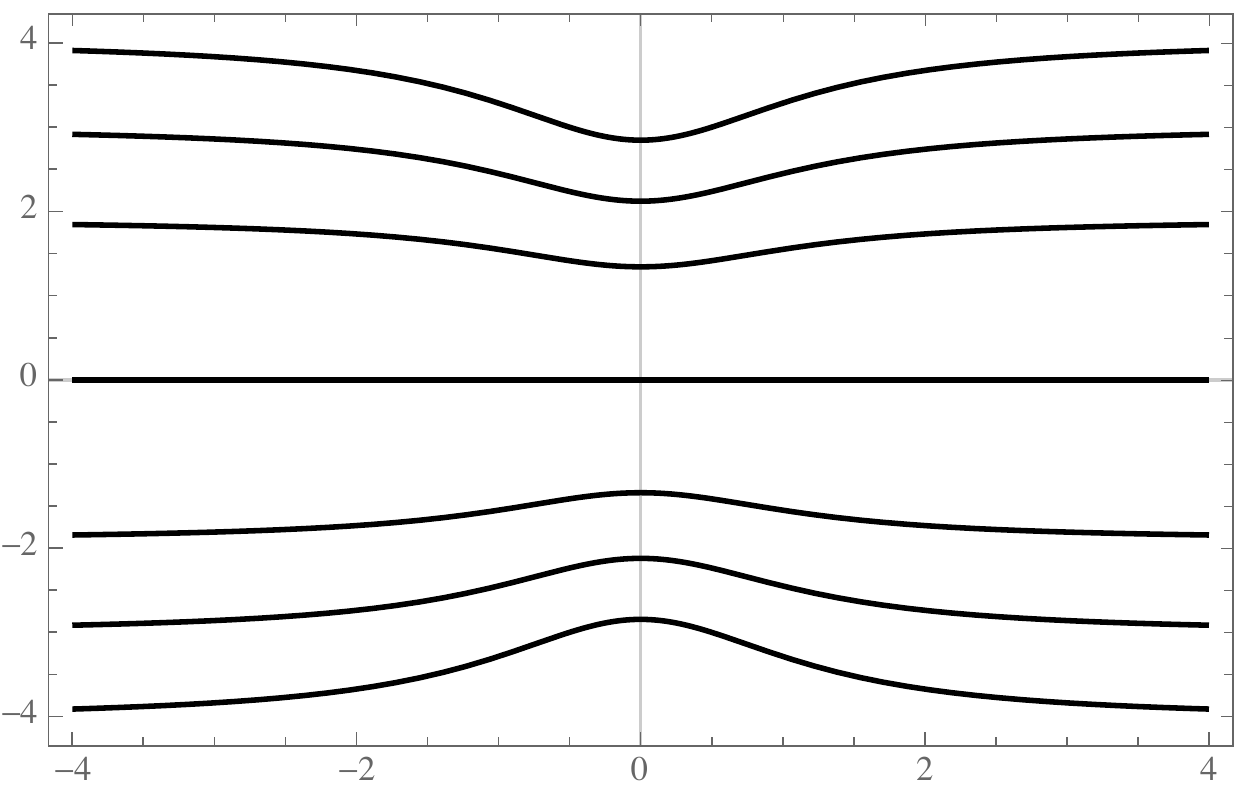}
\linethickness{0.5mm}
\put (51.2,52.4){\vector(1,0){2}}
\put (51.2,47.4){\vector(1,0){2}}
\put (51.2,42.2){\vector(1,0){2}}
\put (51,24.2){\vector(-1,0){2}}
\put (51,19){\vector(-1,0){2}}
\put (51,14){\vector(-1,0){2}}
\end{overpic}
\end{center}
\end{figure}

\section{The Lagrange multiplier}

We now obtain an alternate expression for $\Lambda(A,B)$.

\begin{lemma}
\label{nonid}
If $(A,B)\in\dd$, then
\[
\Lambda(A,B)=\frac{4X_1(A,B)+2X_2(A,B)^2-2X_3(A,B)^2}{|A|^4}.
\]
\end{lemma}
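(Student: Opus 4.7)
The plan is to start from the explicit definition $\Lambda(A,B)=2(\kk-\det B)/|A|^2$ and show that for $(A,B)\in\dd$, the quantity $\det B$ can be rewritten in terms of the invariants $X_1, X_2, X_3$ and the norm $|A|$. The central identity I would aim at is
\[
X_3(A,B)^2-X_2(A,B)^2=|B|^2+|A|^2\det B,
\]
since then the result follows by routine algebra: using $2X_1=|B|^2+\kk|A|^2$ we get $\kk|A|^2+|B|^2=2X_1$, whence
\[
\Lambda(A,B)=\frac{2(\kk|A|^2-|A|^2\det B)}{|A|^4}=\frac{2(\kk|A|^2+|B|^2+X_2^2-X_3^2)}{|A|^4}=\frac{4X_1+2X_2^2-2X_3^2}{|A|^4}.
\]

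To derive the central identity, I would first expand
\[
X_3^2-X_2^2=\tfrac14\bigl(\ip{ZA+AZ}{B}^2-\ip{ZA-AZ}{B}^2\bigr)=\ip{ZA}{B}\ip{AZ}{B},
\]
using the elementary factorization $a^2-b^2=(a+b)(a-b)$ applied to $a=\ip{ZA}{B}$, $b=\ip{AZ}{B}$. This reduces the claim to evaluating the product $\ip{ZA}{B}\ip{AZ}{B}$, which is exactly the left-hand structure appearing in Lemma \ref{metaid}.

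Applying Lemma \ref{metaid}, I get
\[
\ip{A}{B}\ip{\cof A}{B}+\ip{ZA}{B}\ip{AZ}{B}=\tfrac12\ip{\cof A}{A}|B|^2+\tfrac12\ip{\cof B}{B}|A|^2.
\]
Now I would use the hypothesis $(A,B)\in\dd$: by Lemma \ref{normalvector}, $\ip{\cof A}{B}=0$, so the first term on the left vanishes. By Lemma \ref{detcof}, $\tfrac12\ip{\cof A}{A}=\det A=1$ (since $A\in\sltwo$), and similarly $\tfrac12\ip{\cof B}{B}=\det B$. Substituting these simplifications yields precisely
\[
\ip{ZA}{B}\ip{AZ}{B}=|B|^2+|A|^2\det B,
\]
completing the derivation of the central identity.

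The argument is essentially a three-line calculation once Lemma \ref{metaid} is invoked, so there is no real obstacle; the only subtlety is recognizing that the combination $X_3^2-X_2^2$ is designed to match the bilinear product $\ip{ZA}{B}\ip{AZ}{B}$ that Lemma \ref{metaid} controls. The role of the constraint $(A,B)\in\dd$ is exactly to kill the $\ip{A}{B}\ip{\cof A}{B}$ term and to normalize $\det A=1$; both are essential and are the reason the identity cannot be stated for general $(A,B)\in\mm^2\times\mm^2$.
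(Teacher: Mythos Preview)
Your proof is correct and follows essentially the same route as the paper: both apply Lemma~\ref{metaid}, use the constraint $(A,B)\in\dd$ to kill the $\ip{A}{B}\ip{\cof A}{B}$ term and normalize $\det A=1$, obtain $\ip{ZA}{B}\ip{AZ}{B}=|B|^2+|A|^2\det B$, and then unwind the definitions of $X_i$ and $\Lambda$. Your write-up is actually a bit more explicit than the paper's in spelling out the factorization $X_3^2-X_2^2=\ip{ZA}{B}\ip{AZ}{B}$, which the paper leaves to the reader via ``By Definition~\ref{exdef}.''
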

\begin{proof}
If $(A,B)\in\dd$, then $\det A=\half\ip{\cof A}{A}=1$ and $\ip{\cof A}{B}=0$,
and so Lemma \ref{metaid} gives
\[
\ip{ZA}{B}\ip{AZ}{B}
=|B|^2+\det{B}\;|A|^2.
\]
By Definition \ref{exdef}, this is equivalent to
\[
-X_2(A,B)^2+X_3(A,B)^2=2X_1(A,B)-\kk|A|^2+\det{B}\;|A|^2.
\]
The lemma now follows from Definition \ref{lamdef}.
\end{proof}

\begin{theorem}
\label{litlam2}
Let $\kk\ge0$.
If $A\in C^0(\rr,\sltwo)\cap C^2(\rr,\mm^2)$ satisfies  \eqref{mainode}
for some $\lambda\in C^0(\rr,\rr)$, then
\[
\lambda(t)=\Lambda(A(t),\dot A(t))=\frac{4X_1+2X_2^2-2X_3^2}{|A(t)|^4},
\]
with
\[
X_i=X_i(A(0),\dot A(0)),\quad i=1,2,3.
\]
\end{theorem}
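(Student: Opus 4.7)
The plan is to assemble the identity by chaining together three results already established in the excerpt: Lemma \ref{litlam}, Lemma \ref{nonid}, and Theorem \ref{inv1}. Each does part of the work, so the proof reduces to checking their hypotheses and composing them.

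First I would invoke Lemma \ref{litlam}. The standing hypothesis is that $A\in C^0(\rr,\sltwo)\cap C^2(\rr,\mm^2)$ solves \eqref{mainode} for some continuous $\lambda$. By Lemma \ref{phasespace}, this yields $(A(t),\dot A(t))\in\dd$ for every $t\in\rr$, so in particular at $t=0$. Lemma \ref{litlam} then gives the first equality $\lambda(t)=\Lambda(A(t),\dot A(t))$ directly.

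Next, because $(A(t),\dot A(t))\in\dd$ for all $t$, Lemma \ref{nonid} applies pointwise in $t$ to express
\[
\Lambda(A(t),\dot A(t))=\frac{4X_1(A(t),\dot A(t))+2X_2(A(t),\dot A(t))^2-2X_3(A(t),\dot A(t))^2}{|A(t)|^4}.
\]
Finally, Theorem \ref{inv1} guarantees that each $X_i(A(t),\dot A(t))$ is conserved along the flow, so $X_i(A(t),\dot A(t))=X_i(A(0),\dot A(0))=X_i$ for all $t$, and substituting yields the stated formula. The denominator $|A(t)|^4$ is genuinely time-dependent, but this is exactly what the statement claims.

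There is no real obstacle here: the entire content has already been distilled into the three prior results, and the proposition is essentially a compilation. The only step worth writing carefully is the verification that Lemma \ref{nonid} can be applied at each time $t$, which follows from the phase-space membership $(A(t),\dot A(t))\in\dd$ guaranteed by Lemma \ref{phasespace}. No new computation (in particular, no fresh use of Lemma \ref{metaid}) is required, since that machinery is already encapsulated in Lemma \ref{nonid}.
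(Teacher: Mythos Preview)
Your proposal is correct and matches the paper's own proof essentially verbatim: the paper simply cites Lemma \ref{litlam}, Lemma \ref{nonid}, and Theorem \ref{inv1} in that order. Your additional remark that Lemma \ref{phasespace} ensures $(A(t),\dot A(t))\in\dd$ so that Lemma \ref{nonid} applies pointwise is a helpful clarification, but no new idea is needed.
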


\begin{proof}
This follows by Lemma \ref{litlam}, Lemma \ref{nonid}, and Theorem \ref{inv1}.
\end{proof}

\begin{corollary}
\label{pressuresign}
The sign of the pressure is preserved under affine motion.
\end{corollary}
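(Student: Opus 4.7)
The plan is to read off the sign of the pressure directly from the explicit formula for the Lagrange multiplier provided by Theorem \ref{litlam2}, using the fact that the quantities $X_1, X_2, X_3$ are constants of motion.

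First I would recall from Remark \ref{pressuresignremark} that for $x \in \omt$ one has $1 - |A(t)^{-1}x|^2 > 0$, so the pressure
\[
p(t,x) = \tfrac12 \lambda(t)\left[1 - |A(t)^{-1}x|^2\right]
\]
has the same sign as the scalar function $\lambda(t)$ at each time $t$. Thus it suffices to show that $\lambda(t)$ does not change sign along the motion.

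Next, by Theorem \ref{litlam2},
\[
\lambda(t) = \frac{4X_1 + 2X_2^2 - 2X_3^2}{|A(t)|^4},
\]
where $X_i = X_i(A(0), \dot A(0))$ are constants determined by the initial data. The denominator $|A(t)|^4$ is strictly positive (in fact bounded below by $4$ via Lemma \ref{so2.1}), while the numerator is a time-independent constant. Hence $\sign \lambda(t)$ equals $\sign(4X_1 + 2X_2^2 - 2X_3^2)$ for all $t \in \rr$, which establishes the claim.

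There is no real obstacle here: the corollary is an immediate consequence of Theorem \ref{litlam2} together with the positivity of $|A(t)|^4$ and the positivity of $1 - |A(t)^{-1}x|^2$ on the fluid domain. The substantive work was already done in proving that $X_1, X_2, X_3$ are conserved (Theorem \ref{inv1}) and in deriving the closed-form expression for the Lagrange multiplier (Lemma \ref{nonid} and Theorem \ref{litlam2}).
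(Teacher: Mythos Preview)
Your argument is correct and follows exactly the paper's approach: the paper's proof is simply a one-line reference to Lemma~\ref{affinehommhd} (via Remark~\ref{pressuresignremark}) and Theorem~\ref{litlam2}, and you have spelled out precisely the details behind that reference. There is nothing to add.
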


\begin{proof}
This is a consequence of Lemma \ref{affinehommhd} (see Remark \ref{pressuresignremark}) and Theorem \ref{litlam2}.
\end{proof}

We now consider the case of vanishing pressure.

\begin{lemma}
\label{vanpress}
Let $A\in C^0(\rr,\sltwo)\cap C^2(\rr,\mm^2)$ be the solution of \eqref{mainode} with initial data
 $(A_0,B_0)\in\dd$ such that  
 \[
 2X_1(A_0,B_0)+X_2(A_0,B_0)^2-X_3(A_0,B_0)^2=0.
 \]  
If $\kk=0$, then
\[
A(t)=B_0t+A_0,
\]
and if $\kk>0$, then 
\[
A(t)=(\cos\sqrt\kk t )A_0+\tfrac1{\sqrt\kk}(\sin\sqrt\kk t )B_0.
\]

\end{lemma}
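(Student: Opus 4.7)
The plan is to recognize that the hypothesis forces the Lagrange multiplier $\lambda(t)$ to vanish identically, reducing \eqref{mainode} to a trivially solvable linear constant-coefficient equation in $\mm^2$.

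First I would invoke Theorem \ref{litlam2}, which gives
\[
\lambda(t) = \Lambda(A(t),\dot A(t)) = \frac{4X_1 + 2X_2^2 - 2X_3^2}{|A(t)|^4}.
\]
The assumption $2X_1(A_0,B_0) + X_2(A_0,B_0)^2 - X_3(A_0,B_0)^2 = 0$, together with the invariance of $X_1, X_2, X_3$ (Theorem \ref{inv1}), makes the numerator vanish for all $t$, hence $\lambda \equiv 0$. Consequently \eqref{mainode} collapses to the linear ODE
\[
\ddot A(t) + \kk A(t) = 0,
\]
on all of $\mm^2$, with initial data $(A(0),\dot A(0)) = (A_0,B_0)$.

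Next I would simply write down the unique solution of this linear system. For $\kk = 0$ it is $A(t) = A_0 + B_0 t$, and for $\kk > 0$ it is $A(t) = \cos(\sqrt{\kk}\,t) A_0 + \tfrac{1}{\sqrt{\kk}}\sin(\sqrt{\kk}\,t) B_0$, matching the stated formulas.

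The one subtlety worth mentioning is that we need these explicit formulas to in fact be the solution of \eqref{mainode} in $C^0(\rr,\sltwo) \cap C^2(\rr,\mm^2)$, i.e., they must stay in $\sltwo$. The cleanest way is to observe that by Theorem \ref{ivp} the solution with data $(A_0,B_0) \in \dd$ exists globally in $\sltwo$, and by uniqueness in $\mm^2$ it must coincide with the linear formula above. Alternatively, one can verify directly using the bilinear identity $\det(\alpha A + \beta B) = \alpha^2 \det A + \beta^2 \det B + \alpha\beta \ip{B}{\cof A}$ for $2\times 2$ matrices together with $\det A_0 = 1$, $\ip{B_0}{\cof A_0} = 0$ (since $B_0 \in T_{A_0}\sltwo$), and $\det B_0 = \kk$ (which follows from $\lambda(0) = 0$ via Definition \ref{lamdef}) that $\det A(t) \equiv 1$. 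No step presents any real obstacle.
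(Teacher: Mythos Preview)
Your proof is correct and follows exactly the paper's approach: invoke Theorem \ref{litlam2} to conclude $\lambda\equiv 0$, reduce \eqref{mainode} to the linear equation $\ddot A+\kk A=0$, and read off the solution. Your closing paragraph on why the formula stays in $\sltwo$ is a nice addition but not strictly needed, since the hypothesis already hands you $A\in C^0(\rr,\sltwo)$ and uniqueness in $\mm^2$ does the rest.
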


\begin{proof}
By Theorem \ref{litlam2} and the assumption on the invariants, we have $\lambda(t)=0$.
Thus, \eqref{mainode}, reduces to the linear equation $\ddot A+\kk A=0$ whose solutions are as stated above.
\end{proof}

\begin{remark}
When $\kk>0$, the solution $A(t)$, and hence the motion $x(t,y)$,
 is $2T$-periodic with $T=\pi/\sqrt\kk$, while $|A(t)|$ and  the fluid domain $\omt$
is $T$-periodic.  

\end{remark}

\begin{remark}
When $\kk=0$, we obtain geodesics in $\sltwo$ and in $\mm^2$.
\end{remark}

\section{Rigid solutions}

\begin{definition}
A solution $A\in C^0(\rr,\sltwo)\cap C^2(\rr,\mm^2)$ of \eqref{mainode} is called rigid if
$|A(t)|$ is constant for all $t\in\rr$.  A solution $A$ is called a rigid rotation if $A(t)\in\sotwo$,
for all $t\in\rr$.
\end{definition}

Recall that by Lemma \ref{so2.1}, $A(t)\in\sotwo$ if and only if $|A(t)|^2=2$, so a rigid rotation is a rigid solution,
according to the definitions.  Any equilibrium solution is rigid.

As noted in Theorems \ref{mainthm1} and \ref{mainthm2},  the fluid domains $\omt$ of a solution $A(t)$
form a  family of ellipses whose
dimensions depend only on $|A(t)|$.
So for rigid motion, the  $\omt$ are constant up to rotation, and $\omt=\bb$ for a rigid rotation.

\begin{theorem}
\label{rigidsolthm}
Fix $\kk\ge0$.
Let $A\in C^0(\rr,\sltwo)\cap C^2(\rr,\mm^2)$ be a solution of \eqref{mainode} with initial data
$(A_0,B_0)\in\dd$, and set $X_i=X_i(A_0,B_0)$, $i=1,2,3$.

\begin{enumerate}[i.]
\item
\label{equil}
$A$ is an equilibrium if and only if $X_1=\kk$.
\item
\label{rigidrot}
$A$ is a rigid rotation if and only if $A_0\in\sotwo$ and $X_1=\kk+\tfrac14X_3^2$.  
In this case, $A(t)=U(\half X_3t)A_0$.
\item
\label{allrigid}
$A$ is  rigid with $|A_0|^2>2$ if and only if
\begin{equation}
\label{rigcond1}
X_1=\half\left(\frac{X_2^2}{\half|A_0|^2-1}+\frac{X_3^2}{\half|A_0|^2+1}\right)+\khalf|A_0|^2
\end{equation}
and
\begin{equation}
\label{rigcond2}
\frac{X_2^2}{(\half|A_0|^2-1)^2}+\frac{X_3^2}{(\half|A_0|^2+1)^2}=2\kk.
\end{equation}
\end{enumerate}
\end{theorem}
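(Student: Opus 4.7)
The plan is to exploit the Hamiltonian reduction of Theorems \ref{mainthm1} and \ref{mainthm2}: since $\half|A(t)|^2 = 1 + q_1(t)^2$, the solution is rigid if and only if $q_1(t)$ is constant along the flow of either \eqref{Hamsys2} or \eqref{Hamsys3}. In both systems the first equation $\dot q_1 = (2+q_1^2)\xi_1/[2(1+q_1^2)]$ forces $\xi_1(t)\equiv 0$ whenever $q_1$ is constant, and then the requirement $\dot\xi_1\equiv 0$ becomes an algebraic constraint on $(q_1,\xi_2,\xi_3,\kk)$. Using Lemmas \ref{qxiinv1} and \ref{phigammapsizero} to identify $\xi_i = X_i$ for $i=2,3$ and the Hamiltonian with $X_1$ converts these conditions into relations among $X_1,X_2,X_3$ and $|A_0|^2$. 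I would treat (\ref{allrigid}) first, then its specializations (\ref{rigidrot}) and (\ref{equil}).

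For part (\ref{allrigid}), $|A_0|^2>2$ gives $q_1(0)>0$, so \eqref{Hamsys2} applies when $X_2\ne 0$ and \eqref{Hamsys3} when $X_2=0$. Substituting $\xi_1=0$ into the expression for $\dot\xi_1$ and canceling the factor $q_1>0$ gives exactly \eqref{rigcond2}, while equating $X_1$ with $\widetilde H$ (or $H_0$) at $\xi_1=0$ yields \eqref{rigcond1}. For the converse, \eqref{rigcond1} forces $\xi_1(0)=0$ via the Hamiltonian, and \eqref{rigcond2} then gives $\dot\xi_1(0)=0$; thus $(q_1(0),0)$ sits at an equilibrium of the uncoupled $(q_1,\xi_1)$ subsystem, so $q_1$ and hence $|A|$ remain constant by uniqueness of ODE solutions.

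For part (\ref{rigidrot}), a rigid rotation corresponds to $q_1(t)\equiv 0$, which forces $A_0\in\sotwo$ and by Lemma \ref{so2x2} gives $X_2=0$. Using \eqref{Hamsys3}, the identity $X_1 = H_0(q(0),\xi(0)) = \half\xi_1(0)^2 + \tfrac14 X_3^2 + \kk$ at $q_1(0)=0$ shows $q_1\equiv 0 \Leftrightarrow \xi_1(0)=0 \Leftrightarrow X_1 = \kk + \tfrac14 X_3^2$. Integrating $\dot q_3=\tfrac12 X_3$ and applying the explicit formula of Theorem \ref{mainthm2} then produces $A(t)=U(\tfrac12 X_3 t)A_0$. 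Part (\ref{equil}) is a further specialization: if $A(t)\equiv A_0$, then $B_0=0$ and \eqref{mainode} at $t=0$ gives $\kk A_0 = (2\kk/|A_0|^2)\cof A_0$, which (by isometry of $\cof$, Lemma \ref{cofiso}) forces $\kk=0$ or $A_0\in\sotwo$; either way $X_1=\kk$. Conversely, $X_1=\kk$ combined with $X_1 \ge \kk$ from Lemma \ref{so2.1} forces $B_0=0$, and then the constant curve $A(t)\equiv A_0$ directly solves \eqref{mainode}.

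The main obstacle is the case split in (\ref{allrigid}) between \eqref{Hamsys2} and \eqref{Hamsys3}: both occur under $|A_0|^2>2$, distinguished by $X_2\ne 0$ versus $X_2=0$. One must check that the unified conditions \eqref{rigcond1} and \eqref{rigcond2} reduce correctly in the $X_2=0$ subcase, where the $X_2$-dependent terms simply vanish. A secondary subtlety appears in (\ref{equil}) when $\kk=0$: there $X_1=0$ forces only $B_0=0$ and not $|A_0|^2=2$, but the constant solution remains valid since $\Lambda(A_0,0)=0$ whenever $\kk=0$, so both sides of \eqref{mainode} vanish at the initial point.
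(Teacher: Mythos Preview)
Your proposal is correct and follows essentially the same route as the paper: both arguments reduce rigidity to the condition that $(q_1,\xi_1)$ sits at an equilibrium of the uncoupled subsystem in \eqref{Hamsys2} or \eqref{Hamsys3}, then read off \eqref{rigcond1} from the Hamiltonian and \eqref{rigcond2} from $\dot\xi_1=0$. The only noticeable differences are cosmetic: the paper treats the parts in order (i), (ii), (iii) rather than your (iii), (ii), (i), and for the converse of (i) the paper uses conservation of $X_1$ along the full flow to get $\half|\dot A(t)|^2=\khalf(2-|A(t)|^2)\le0$ directly, whereas you deduce $B_0=0$ at $t=0$ and then verify that the constant curve satisfies \eqref{odes} (invoking uniqueness implicitly).
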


\begin{proof}
If $A$ is an equilibrium, then $\dot A=0$, and so in particular, $B_0=0$.  This implies that
\begin{equation}
\label{invvals}
X_1=\khalf|A_0|^2\isp{and}X_i=0,\quad  i=2,3.
\end{equation}
If $\kk=0$, then we have verified that $X_1=\kk$.  So we may  assume that $\kk>0$.
By Lemma \ref{nonid} and \eqref{invvals}, we see that
\[
\Lambda(A_0,B_0)=4X_1/|A_0|^4=\kk^2/X_1.
\]
Now $A$ solves \eqref{odes}, so setting $\ddot A=0$ and then $t=0$, we get  
\[
\kk A_0= \Lambda(A_0,B_0)\cof A_0=(\kk^2/X_1)\cof A_0.
\]
Since the cofactor map is norm-preserving, we find
\[
\kk=\kk^2/X_1.
\]
The assumption $\kk>0$ implies that $X_1=\kk$.

Conversely, if $X_1=\kk$, then 
\[
\half |\dot A|^2= X_1(A,\dot A)-\khalf |A|^2=X_1-\khalf|A|^2=\khalf(2-|A|^2)\le0,
\]
by Lemma \ref{so2.1}.  This shows that $\dot A=0$, so $A$ is an equilibrium.
This completes the proof of statement \ref{equil}.

Let $A_0\in\sotwo$.  Then $(A_0,B_0)\in\dd_0$.  By Lemma \ref{so2x2}, $X_2=0$.
By Lemma \ref{phigammapsizero}, we may choose
$(q(0),\xi(0))\in\rr^3\times\rr^3$ such that
\[
(A_0,B_0)=\Phi\circ\Gamma\circ\Psi_0(q(0),\xi(0)),\quad X_1=H_0(q(0),\xi(0)),\quad \xi_2(0)=0,\quad X_3=\xi_3(0).
\]
Since $A_0\in\sotwo$, we also have $q_1(0)^2=\half|A_0|^2-1=0$.  Thus,
\begin{equation}
\label{coordid}
q(0)=(0,q_2(0),q_3(0))\isp{and} \xi(0)=(\xi_1(0),0,X_3).
\end{equation}
Let $(q(t),\xi(t))$ be the solution of \eqref{Hamsys3} with initial data $(q(0),\xi(0))$.
Then by Theorem \ref{mainthm2},
\begin{equation}
\label{aformsotwo}
A(t)=\tfrac1{\sqrt2}U \left(\half(q_3(t)+q_2(0))\right) \left(\rho(t) I+q_1(t) K\right)U \left(\half(q_3(t)-q_2(0))\right).
\end{equation}
We also note that since $q_1(0)=0$
\begin{equation}
\label{x1id}
X_1=H_0(q(0),\xi(0))=\tfrac14(\xi_1(0)^2+X_3^2)+\kk.
\end{equation}

Given that \eqref{coordid} and \eqref{x1id} hold, we can now assert  that the following statements are equivalent:
\begin{itemize}
\item
$X_1=\kk+\tfrac14X_3^2$,
\item
$\xi_1(0)=0$,
\item
$q(t)=(0,q_2(0),\half X_3t+q_3(0))$ and $\xi(t)=(0,0,X_3)$,
\item
$q_1(t)=0$ for all $t\in\rr$,
\item
$|A(t)|^2=2$ for all $t\in\rr$,
\item
$A(t)\in\sotwo$ for all $t\in\rr$.
\end{itemize}

Finally, we note that if $A(t)\in\sotwo$, for all $t\in\rr$, then $q_1(t)=0$, $\rho(t)=\sqrt2$,  and
the formula \eqref{aformsotwo}
reduces to $A(t)=U(q_3(t))=U(\half X_3t)A_0$.
This proves statement \ref{rigidrot}.

Next, suppose that $|A_0|^2>2$. Then $(A_0,B_0)\in\dd\setminus\dd_0$.
 Let us also assume for the moment that $X_2\ne0$.
By Lemma \ref{phigammapsi}, there exists $(q(0),\xi(0))\in\rr_+^3\times\rr^3$ such that
\[
(A_0,B_0)=\Phi\circ\Gamma\circ\Psi(q(0),\xi(0)),
\]
and by and Lemma \ref{qxiinv1}
\begin{equation}
\label{qxiinv2}
 X_1=\widetilde H(q(0),\xi(0)),\quad 
X_2=\xi_2(0),\quad X_3=\xi_3(0).
\end{equation}
Let $(q,\xi)\in C^1(\rr,\rr_+^3\times\rr^3)$ be the solution of \eqref{Hamsys2}
with initial data $(q(0),\xi(0))$.
 By Theorem \ref{mainthm1}, we have
 \[
 A(t)=\varphi\circ\psi\circ q(t).
 \]
 
 Since $q_1(0)^2=\half|A_0|^2-1$, we see from  \eqref{qxiinv2} and Definition \ref{Ham2}  that \eqref{rigcond1}
 is equivalent to $\xi_1(0)=0$.
 By \eqref{Hamsys2}, we also note that \eqref{rigcond1} and \eqref{rigcond2} are equivalent to $\dot \xi_1(0)=0$.
 Therefore, if \eqref{rigcond1}, \eqref{rigcond2} hold, then the unique solution of \eqref{Hamsys2}
 is
 \[
 q(t)=\left(q_1(0),\frac{X_2}{q_1(0)^2}t+q_2(0),\frac{X_3}{2+q_1(0)^2}t+q_3(0)\right),\quad
 \xi(t)=\left(0,X_2,X_3\right).
 \]
Since $q_1(t)$ is constant, $A$ is rigid.

Conversely, if $A$ is rigid with $|A(t)|^2=|A_0|^2>2$, then $q_1(t)=q_1(0)>0$ and $\dot q_1(t)=0$,
for all $t\in\rr$.  It follows from \eqref{Hamsys2} that 
$\xi_1(t)=0$, for all $t\in\rr$.  As noted above, this implies that \eqref{rigcond1} holds.  We also see that
$\dot \xi_1(t)=0$, for all $t\in\rr$, and  
$q_1(t)=q_1(0)>0$, \eqref{Hamsys2}  implies that  \eqref{rigcond2} holds.

If $X_2=0$, then the result follows from the analogous argument using 
Lemma \ref{phigammapsizero} and Theorem \ref{mainthm2}.
\end{proof}

\section{Asymptotic behavior for MHD, $\kk>0$}
\label{mhdsec}
\begin{theorem}
\label{kpoptions}
Fix $\kk>0$.  Let $A\in\curve$ be a solution of \eqref{mainode}.
Then either
\begin{itemize}
\item
$A$  is rigid,  
\item
$|A|$ is nonconstant and $T$-periodic with $T>0$,   
\item
or $|A|>2$ and
$|A(t)|\searrow2$, as $t\to\pm\infty$.
\end{itemize}
 \end{theorem}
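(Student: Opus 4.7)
The plan is to reduce the asymptotic question to a classification of orbit types in the reduced $(q_1,\xi_1)$-phase plane. By Theorems \ref{mainthm1} and \ref{mainthm2}, every $A\in\curve$ solving \eqref{mainode} arises as $A(t)=\varphi\circ\psi\circ q(t)$ for a trajectory $q(t)$ of \eqref{Hamsys2} (when $X_2\ne0$) or of \eqref{Hamsys3} (when $X_2=0$), and in either case $\half|A(t)|^2=1+q_1(t)^2$. Hence the alternatives for $|A|$ correspond exactly to the possibilities for $q_1$.

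Next I would exploit the fact that the momenta $\xi_2,\xi_3$ are conserved along \eqref{Hamsys2}/\eqref{Hamsys3} (equalling $X_2,X_3$ by Lemmas \ref{qxiinv1} and \ref{phigammapsizero}), so the pair $(q_1,\xi_1)$ evolves by a one-degree-of-freedom Hamiltonian system whose Hamiltonian is the restriction of $\widetilde H$ (resp.\ $H_0$) to the fixed pair $(\xi_2,\xi_3)$. The orbit $(q_1(t),\xi_1(t))$ lies on the level set at energy $X_1$. Therefore classifying the behavior of $|A|$ amounts to classifying these level sets, which was already done in Theorems \ref{htlevelsets} and \ref{hzlevelsets}.

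Case $X_2\ne0$: By Theorem \ref{htlevelsets} (in the regime $\kk>0$), every level set in $\{q_1>0\}$ is either the unique minimum point of $\widetilde H$, giving $q_1$ constant and $A$ rigid, or a smooth simple closed curve, giving nonconstant periodic $q_1$ (a non-equilibrium closed orbit of an autonomous planar system is periodic with some minimal period $T>0$) and hence $T$-periodic nonconstant $|A|$. Case $X_2=0$: By Theorem \ref{hzlevelsets} with $\kk>0$, the level set can be (i) an equilibrium of $(q_1,\xi_1)$, so $q_1$ is constant and $A$ is rigid (this covers both $q_1\equiv0$, i.e.\ $A\in\sotwo$, and the two symmetric centers $\pm q_1(\xi_3)$ when $0<\kk<\tfrac18X_3^2$); (ii) a closed orbit (around a center, around $(0,0)$ when $\kk\ge\tfrac18X_3^2$, or encircling all three critical points), yielding nonconstant periodic $q_1$ and hence nonconstant $T$-periodic $|A|$; or (iii) one of the two homoclinic orbits to the saddle at $(0,0)$, which exist precisely when $0<\kk<\tfrac18X_3^2$, along which $q_1(t)\ne0$ for all finite $t$ but $q_1(t)\to0$ as $t\to\pm\infty$, producing $|A(t)|^2>2$ with $|A(t)|^2\to2$ as $|t|\to\infty$.

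The core work is already in Theorems \ref{htlevelsets} and \ref{hzlevelsets}, so the main obstacle is simply bookkeeping: verifying that the enumerated orbit types cover every possible solution and that each maps to exactly one of the three stated alternatives. The only mildly subtle point is to identify the third alternative uniquely with the homoclinic case by showing that unboundedness of orbits cannot occur for $\kk>0$ (ruled out by the coercivity of $\widetilde H$ and $H_0$ in $(q_1,\xi_1)$, which forces every level set to be bounded), so the only orbits with $q_1(t)\to0$ at $\pm\infty$ without $q_1\equiv0$ are homoclinic connections.
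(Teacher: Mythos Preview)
Your proposal is correct and follows exactly the paper's approach: the paper's proof is a single sentence stating that, since $\tfrac12|A|^2=1+q_1^2$ by Theorems \ref{mainthm1} and \ref{mainthm2}, the result summarizes Theorems \ref{htlevelsets} and \ref{hzlevelsets} for $\kk>0$. Your writeup is simply a more detailed unpacking of that same reduction and case analysis.
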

 
 \begin{proof}
Since $\half |A|^2=1+q_1^2$ by Theorems \ref{mainthm1} and  \ref{mainthm2},
this summarizes Theorems \ref{htlevelsets}  and \ref{hzlevelsets}, when $\kk>0$.
\end{proof}
 
 The next result provides a detailed analysis of the third option in Theorem \ref{kpoptions}.
 
\begin{theorem}
\label{kpinvman}
Fix $\kk>0$.  If $X_1=\kk+\tfrac14X_3^2$ with $X_3^2>8\kk$, then the set
\begin{equation}
\label{invman1}
\ww=\{(A,B)\in\dd: X_i(A,B)=X_i,\; i=1,3,\; X_2(A,B)=0\}
\end{equation}
is nonempty and invariant under the flow 
of \eqref{mainode} in  $ C^0(\rr,\sltwo)\cap C^2(\rr,\mm^2)$.

Let $0<\mu<\half(X_3^2-8\kk)^{1/2}$.
If $A\in\curve$ is a solution of \eqref{mainode}  with initial data $(A_0,B_0)\in\ww$, then
there exist a time $T>0$ and phases $\theta_\pm$ such that 
\[
\left| \left(\frac{d}{dt}\right)^j \left[
A(t)-U\left(\tfrac12X_3t+\theta_{+}\right)\right]\right|\lesssim e^{-\mu t},\quad j=0,1,\quad 
\ip{A(t)}{\dot A(t)}<0,\quad t>T,
\]
and
\[
\left| \left(\frac{d}{dt}\right)^j\left[
A(t)-U\left(\tfrac12X_3t+\theta_{-}\right)\right]\right|\lesssim e^{\mu t},\quad j=0,1,\quad 
\ip{A(t)}{\dot A(t)}>0,\quad t<-T.
\]

\end{theorem}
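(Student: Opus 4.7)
\emph{Plan.} My plan has four parts: (i) verify that $\ww$ is nonempty and invariant; (ii) reduce the dynamics to the phase plane of the Hamiltonian system \eqref{Hamsys3}; (iii) extract exponential decay rates from the linearization at the saddle; (iv) lift those rates back to $A(t)$ and $\dot A(t)$. Nonemptiness follows from the explicit example $A_0=I$, $B_0=\tfrac{X_3}{2}Z$, for which Definition~\ref{exdef} gives $X_1=\kk+\tfrac14 X_3^2$, $X_2=0$, $X_3=X_3$; this is the rigid rotation of Theorem~\ref{rigidsolthm}(\ref{rigidrot}). Invariance of $\ww$ under the flow of \eqref{mainode} is immediate from conservation of the $X_i$ (Theorem~\ref{inv1}).

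Because $X_2=0$, Theorem~\ref{mainthm2} lets me write $A(t)=\varphi\circ\psi\circ q(t)$ with $(q,\xi)$ solving \eqref{Hamsys3} and $\xi_3\equiv X_3$. From Definition~\ref{Ham3}, $H_0(0,0)=\kk+\tfrac14 X_3^2=X_1$, so the orbit $(q_1(t),\xi_1(t))$ lies on the level curve of $H_0$ through the origin. The condition $X_3^2>8\kk$ places us in the first case of Theorem~\ref{hzlevelsets} (Fig.~\ref{phasediagram3}), where this level set is exactly the saddle $(0,0)$ together with two homoclinic orbits. The saddle corresponds to $A_0\in\sotwo$ (the rigid rotation); any other initial data in $\ww$ must correspond to a point on one of the homoclinic orbits.

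Linearizing the first pair of equations in \eqref{Hamsys3} at $(q_1,\xi_1)=(0,0)$ yields $\dot q_1=\xi_1$ and $\dot\xi_1=(\tfrac14 X_3^2-2\kk)\,q_1$, whose eigenvalues are $\pm\omega$ with $\omega=\tfrac12(X_3^2-8\kk)^{1/2}$. The stable/unstable manifold theorem then provides local $C^1$ invariant curves tangent at the saddle to the eigendirections; the homoclinic orbits coincide with these curves near the origin, and any solution on them satisfies $|q_1(t)|+|\xi_1(t)|\lesssim e^{-\omega|t|}$ on the forward (respectively backward) time axis. For any $\mu\in(0,\omega)$ this yields the required exponential bounds. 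To transfer these to $A(t)$, I use the explicit product formula of Theorem~\ref{mainthm2}. Since $\rho(t)-\sqrt2=O(q_1^2)$ and $\dot q_3-\tfrac{X_3}{2}=-\tfrac{X_3 q_1^2}{2(2+q_1^2)}=O(q_1^2)$, the phase $q_3(t)-\tfrac{X_3}{2}t$ has an exponential limit $\theta_\pm$ as $t\to\pm\infty$; substituting into the product formula and using that $U(\cdot)$ is an isometry gives $|A(t)-U(\tfrac{X_3}{2}t+\theta_\pm)|\lesssim e^{-\mu|t|}$. Differentiating the product and using \eqref{Hamsys3} yields the derivative estimate. Finally, $\ip{A}{\dot A}=\tfrac12\tfrac{d}{dt}|A|^2=2 q_1\dot q_1$; on the forward branch $|q_1|$ decreases monotonically to $0$, so $q_1\dot q_1<0$ for $t>T$, while on the backward branch $|q_1|$ grows away from $0$, so $q_1\dot q_1>0$ for $t<-T$.

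The main obstacle is the rate matching in the lifting step. All corrections to $A-U(\cdots)$ and $\dot A-\dot U(\cdots)$ involve either $q_1,\xi_1$ directly (rate $\omega$) or $O(q_1^2)$ quantities (rate $2\omega$), both strictly faster than the allowed $\mu<\omega$, so the bounds are not tight. The delicate point is that the phase $q_3(t)$ has only an \emph{asymptotically} constant derivative, so producing the limit phase $\theta_\pm$ requires integrating an $O(e^{-2\omega t})$ error to a finite value and then controlling the tail — a routine task, but one that must be carried out carefully through the coordinate chain.
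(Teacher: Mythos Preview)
Your proposal is correct and follows essentially the same route as the paper: reduce via Theorem~\ref{mainthm2} to the $(q_1,\xi_1)$ phase plane of \eqref{Hamsys3}, identify the level set $H_0=H_0(0,0)$ as the saddle plus two homoclinic loops (Theorem~\ref{hzlevelsets}), extract exponential decay of $(q_1,\xi_1)$ near the saddle, integrate $\dot q_3-\tfrac12X_3=O(q_1^2)$ to obtain the phases $\theta_\pm$, and lift everything back through the product formula. The only notable difference is that where you invoke the stable/unstable manifold theorem as a black box for the rate $\mu<\omega=\tfrac12(X_3^2-8\kk)^{1/2}$, the paper does this step by hand: it forms the diagonalizing combinations $\alpha q_1\mp\xi_1$ (with $\alpha=\omega$), shows from \eqref{Hamsys3} that each satisfies a scalar differential inequality of the form $\dot w+\alpha w\le(\alpha-\mu)w$ once the cubic remainders are absorbed, and integrates to get $0<\alpha q_1\mp\xi_1\lesssim e^{\mp\mu t}$ directly---a slightly more elementary and self-contained argument that avoids citing an external theorem.
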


\begin{proof}
As outlined in Theorem \ref{hzlevelsets}, 
the assumptions on the invariants imply that
the set 
\[
\cc(X_3)=\{(q,\xi)\in\rr^3\times\rr^3: \xi_2=0,\; \xi_3=X_3,\;H_0(q_1,\xi_1,X_3)=H_0(0,0,X_3)\}
\]
is the union of  an equilibrium point at $(0,0)$ and two homoclinic orbits.
The image of $\cc(X_3)$ under $\Phi\circ\Gamma\circ\Psi_0$ is equal to $\ww$, and so $\ww\ne\emptyset$.

The invariance of $\ww$ follows by virtue of the invariance of the quantities $X_i(A,\dot A)$, according to Theorem
\ref{inv1}.

Fix $\kk>0$, $X_3>8\kk$ and let $X_1=\kk+\tfrac14X_3^2$.  Suppose that $(q,\xi)\in C^1(\rr,\rr^3\times\rr^3)$ is
a solution of \eqref{Hamsys3} such that $(q_1(t),\xi_1(t))$ parameterizes the homoclinic
orbit in $\cc(X_3)$ with $q_1(t)>0$.  From \eqref{Hamsys3}, we see that this orbit satisfies
\[
q_1(t)\searrow0,\quad \xi_1(t)\nearrow0,\isp{as} t\to\infty,
\]
and
\[
q_1(t)\searrow0,\quad \xi_1(t)\searrow0,\isp{as} t\to-\infty.
\]
Let $\alpha^2=\tfrac14X_3^2-2\kk$.  Then for any $\eps>0$, we can find a $T\gg1$ such that
\begin{equation}
\label{pos1}
0< (|q_1(t)|+| \xi_1(t)|)^3<\eps(\alpha q_1(t)-\xi_1(t)),\quad t>T,
\end{equation}
and
\begin{equation}
\label{pos2}
0< (|q_1(t)|+| \xi_1(t)|)^3<\eps(\alpha q_1(t)+\xi_1(t)),\quad t<-T.
\end{equation}

From \eqref{Hamsys3}, we see that
\[
\dot q_1-\xi_1=\oo(|q_1|^3+|\xi_1|^3),\quad \dot\xi_1-\alpha^2q_1=\oo(|q_1|^3+|\xi_1|^3).
\]
Taking linear combinations of these two equations yields
\[
\frac{d}{dt}(\alpha q_1- \xi_1)+\alpha(\alpha q_1- \xi_1)=\oo(|q_1|+|\xi_1|)^3
\]
and
\[
\frac{d}{dt}(\alpha q_1+ \xi_1)-\alpha(\alpha q_1+\xi_1) =\oo(|q_1|+|\xi_1|)^3.
\]
Choose $0<\mu<\alpha$.  Using \eqref{pos1} and \eqref{pos2}, we can find a $T>0$ such that
\[
\frac{d}{dt}(\alpha q_1- \xi_1)+\alpha(\alpha q_1- \xi_1)<(\alpha-\mu)(\alpha q_1- \xi_1),\quad t>T
\]
and
\[
\frac{d}{dt}(\alpha q_1+ \xi_1)-\alpha(\alpha q_1+\xi_1) >-(\alpha-\mu)(\alpha q_1+ \xi_1),\quad t<-T.
\]
From here, we obtain the bounds
\begin{equation}
\label{ub1}
0<\alpha q_1(t)-\xi_1(t)\lesssim e^{-\mu t},\quad t>T
\end{equation}
and
\begin{equation}
\label{ub2}
0<\alpha q_1(t)+\xi_1(t)\lesssim e^{\mu t},\quad t<-T.
\end{equation}

Going back to \eqref{Hamsys3}, we can now write
\[
q_3(t)=\half X_3t+\theta_++\half X_3\int_t^\infty\frac{q_1(s)^2}{2+q_1(s)^2}ds,\quad 
\theta_+=q_1(0)-\half X_3\int_0^\infty\frac{q_1(s)^2}{2+q_1(s)^2}ds
\]
and
\[
q_3(t)=\half X_3t+\theta_--\half X_3\int^t_{-\infty}\frac{q_1(s)^2}{2+q_1(s)^2}ds,\quad 
\theta_-=q_1(0)+\half X_3\int_{-\infty}^0\frac{q_1(s)^2}{2+q_1(s)^2}ds.
\]
The integrals converge, thanks to \eqref{ub1}, \eqref{ub2}.  Moveover, we have
\begin{equation}
\label{ub3}
|q_3(t)-(\half X_3t+\theta_+)|\lesssim e^{-\mu t},\quad t>T
\end{equation}
and
\begin{equation}
\label{ub4}
|q_3(t)-(\half X_3t+\theta_-)|\lesssim e^{\mu t},\quad t<-T.
\end{equation}
Recycling the estimates  \eqref{ub1}, \eqref{ub2}, \eqref{ub3}, \eqref{ub4}  
in \eqref{Hamsys3} yields identical bounds for the derivatives.

By Theorem \ref{mainthm2}, we can write
\begin{multline*}
A(t)-U(\half X_3t+\theta_+)\\=
U\left(\half(q_3(t)+q_2(0))\right)\;\big((\rho(t)-1)I+q_3(t)K\big)\;U\left(\half(q_3(t)-q_2(0))\right)\\
+\left(U\big(q_3(t)-\half X_3t-\theta_+\big)-I\right))\;
U\big(\half X_3t+\theta_+\big).
\end{multline*}
Thus, applying the bounds \eqref{ub1} and \eqref{ub3}, we obtain
\[
\left|A(t)-U\big(\half X_3t+\theta_+\big)\right|
\le |(\rho(t)-1)I+q_3(t)K|+\left|U\big(q_3(t)-\half X_3t-\theta_+\big)-I\right|\lesssim e^{-\mu t},
\]
for $t>T$.  The corresponding estimate for $t<-T$ is proven in the same way using \eqref{ub2} and
\eqref{ub4}.

We note that
\[
 \ip{A(t)}{\dot A(t)}=\frac{d}{dt}\half|A(t)|^2=\frac{d}{dt}(1+q_1(t)^2)=q_1(t)\dot q_1(t).
 \]
 By \eqref{Hamsys3}, we see that 
 \[
 \sign \ip{A(t)}{\dot A(t)} =\sign \xi_1(t).
 \]

 The argument for the case $q_1(t)<0$ is symmetric.
 
\end{proof}

\begin{remark}
The total phase shift is given by the
expression
\[
\theta_+-\theta_-=-\half X_3\int_{-\infty}^\infty\frac{q_1(s)^2}{2+q_1(s)^2}ds.
\]
\end{remark}

\begin{corollary}
\label{kkpnhim}
Fix $\kk>0$.  If $X_1=\kk+\tfrac14X_3^2$ with $X_3^2>8\kk$, then the set
\begin{equation}
\label{rotinvman}
\rrr(X_3)=\{(A,B)\in\ww: A\in\sotwo\}
\end{equation}
corresponds to the orbit of the rigid rotation
$U\left(\tfrac12X_3t\right)$.  The set $\ww\setminus\rrr(X_3)$ is a stable and unstable manifold for $\rrr(X_3)$.
Every solution orbit  $(A(t),A'(t))$ in $\ww\setminus\rrr(X_3)$ is homoclinic to $\rrr(X_1)$, that is,
\[
\lim_{|t|\to\infty}e^{\mu|t|}\dist [(A(t),A'(t)),\rrr(X_3)]=0,
\]
for some $\mu>0$.
\end{corollary}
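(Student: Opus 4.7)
The plan is to read this corollary as a repackaging of the exponential asymptotic bounds in Theorem \ref{kpinvman}, with the extra step of recognizing that the limiting rotation curves actually lie inside $\rrr(X_3)$ as points of the tangent bundle, not merely as elements of $\sotwo$.

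I would first identify $\rrr(X_3)$ explicitly. Any $(A_0,B_0)\in\rrr(X_3)$ satisfies $A_0\in\sotwo$ together with $X_1(A_0,B_0)=\kk+\tfrac14 X_3^2$, so Theorem \ref{rigidsolthm}(\ref{rigidrot}) gives the solution $A(t)=U(\half X_3 t)A_0$. A short computation using $\sotwo=\{U(\phi):\phi\in\rr\}$, $\cof U(\phi)=U(\phi)$, and $\ZA U(\phi)=U(\phi)Z$ shows that these solutions sweep out the single periodic orbit
\[
\rrr(X_3)=\bigl\{\bigl(U(\phi),\,\half X_3\, Z\, U(\phi)\bigr):\phi\in\rr\bigr\},
\]
and in fact the constraints $A\in\sotwo$, $X_1=\kk+\tfrac14 X_3^2$, $X_3(A,B)=X_3$ force the velocity component uniquely (by equality in Cauchy--Schwarz applied to $\ip{ZA}{B}=X_3$, $|B|^2=\half X_3^2$). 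For $(A_0,B_0)\in\ww\setminus\rrr(X_3)$, the condition $A_0\notin\sotwo$ forces $|A_0|^2>2$, and via Theorem \ref{mainthm2} this means $(q_1(0),\xi_1(0))\ne(0,0)$ on the level curve $\{H_0=H_0(0,0,X_3)\}$ with $\xi_3=X_3$ fixed. Since the hypothesis $X_3^2>8\kk$ is precisely $\kk<\tfrac18 X_3^2$, Theorem \ref{hzlevelsets} identifies this level set as the union of the saddle at the origin and two homoclinic orbits, and the trajectory starting from $(A_0,B_0)$ lives on one of them.

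I would then invoke Theorem \ref{kpinvman} to extract phases $\theta_\pm$ and a rate $\mu\in(0,\half(X_3^2-8\kk)^{1/2})$ such that, writing $A_\infty^\pm(t)=U(\half X_3 t+\theta_\pm)$,
\[
\bigl|A(t)-A_\infty^\pm(t)\bigr|+\bigl|\dot A(t)-\dot A_\infty^\pm(t)\bigr|\lesssim e^{-\mu|t|},\qquad \pm t>T.
\]
The closing observation is that $A_\infty^\pm(t)\in\sotwo$ and $\dot A_\infty^\pm(t)=\half X_3\,Z A_\infty^\pm(t)$, which is immediate from $U(\theta)=\exp(\theta Z)$. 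Hence $(A_\infty^\pm(t),\dot A_\infty^\pm(t))\in\rrr(X_3)$ for every $t$, and bounding $\dist[(A(t),\dot A(t)),\rrr(X_3)]$ by the distance to this single reference point yields $e^{\mu|t|}\dist[(A(t),\dot A(t)),\rrr(X_3)]\to 0$ as $|t|\to\infty$, which is the homoclinic statement.

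Invariance of $\ww$ under the flow is already recorded in Theorem \ref{kpinvman}, and invariance of $\rrr(X_3)$ follows from the rigid-rotation description above. Nearly all the analytic work has been done upstream, so I do not anticipate a serious obstacle; the only step that must not be skipped is the velocity identity $\dot A_\infty^\pm(t)=\half X_3\, Z A_\infty^\pm(t)$, without which the limit curve would only be known to have its first component in $\sotwo$, not to sit inside $\rrr(X_3)$ as a tangent-bundle point.
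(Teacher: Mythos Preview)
Your proposal is correct and follows essentially the same route as the paper: identify $\rrr(X_3)$ with the orbit $\{(A,\tfrac12 X_3 ZA):A\in\sotwo\}$ via Theorem~\ref{rigidsolthm}, then read off the exponential homoclinic convergence directly from Theorem~\ref{kpinvman}. You supply more detail than the paper (the Cauchy--Schwarz step and the explicit velocity identity $\dot A_\infty^\pm=\tfrac12 X_3 Z A_\infty^\pm$), but the argument is the same; just fix the stray macro \verb|\ZA| before compiling.
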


\begin{proof}
The function $A(t)=U(\half X_3t)$ is a rigid rotation, by Theorem \ref{rigidsolthm}.
  Its orbit is $\{(A,\half X_3ZA):A\in\sotwo\}$, and it is easily
 verified that this set coincides with $\rrr(X_3)$.
 
 The final statement is a consequence of Theorem \ref{kpinvman}.
 
\end{proof}

\begin{remark}
The corollary shows that the rigid rotational solutions are unstable within the class
of affine motions when $X_3^2>8\kk$.
\end{remark}

\begin{theorem}
\label{multiperiodic}
If $A\in\curve$ is a solution of  \eqref{mainode}
such that $|A|$  is nonconstant and $T$-periodic for some $T>0$, then
the solution has the form
\[
 A(t) =U\left(\half(\omega_1+\omega_2)t\right)\hat A(t) \;U\left(\half(\omega_1-\omega_2)t\right),
 \]
 where $\hat A(t)$ is
is $T$-periodic if $A(t)\notin\sotwo$, for all $t\in\rr$, and $2T$-periodic if $A(t)\in\sotwo$,
for some $t\in\rr$.
The frequencies are defined by
\begin{align*}
&\omega_1=
\frac{1}{T}\int_0^T\frac{X_3}{\half|A(t)|^2+1}\;dt
\intertext{and}
&\omega_2=
\begin{cases}
0,&X_2=0\\
\displaystyle \frac{1}{T}\int_0^T\frac{X_2}{\half|A(t)|^2-1}\;dt, &X_2\ne 0,
\end{cases}
\end{align*}
where $X_i=X_i(A,\dot A)$, $i=2,3$.
\end{theorem}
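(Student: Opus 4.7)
The plan is to use the explicit parametrizations from Theorems \ref{mainthm1} and \ref{mainthm2}. In either case $X_2\ne0$ or $X_2=0$, we have
\[
A(t) = \tfrac{1}{\sqrt 2}\,U\!\left(\tfrac12(q_3(t)+q_2(t))\right)(\rho(t)I + q_1(t)K)\,U\!\left(\tfrac12(q_3(t)-q_2(t))\right),
\]
with $q_2(t)\equiv q_2(0)$ when $X_2=0$. Since $\tfrac12|A|^2=1+q_1^2$, the hypothesis that $|A|$ is nonconstant and $T$-periodic is equivalent to $q_1^2$ being nonconstant and $T$-periodic. The first task is to determine how $q_1$ itself behaves under $t\mapsto t+T$.

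If $q_1$ never vanishes (equivalently $A(t)\notin\sotwo$ for all $t$), then by continuity $q_1$ has constant sign, and $q_1(t+T)^2=q_1(t)^2$ forces $q_1(t+T)=q_1(t)$, so $q_1$ is itself $T$-periodic; by Corollary \ref{passthrough} this covers the entire $X_2\ne0$ case as well as the sub-case of $X_2=0$ in which $A\notin\sotwo$ always. The delicate sub-case is $X_2=0$ with $q_1$ vanishing at some time. Here the Hamiltonian vector field for $H_0(\,\cdot\,,\,\cdot\,,X_3)$ is odd under the involution $\sigma(q_1,\xi_1):=(-q_1,-\xi_1)$: $\dot q_1$ is even in $q_1$ and odd in $\xi_1$, while $\dot\xi_1$ is odd in $q_1$ and even in $\xi_1$. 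Combined with $D\sigma=-I$, this shows that $\sigma$ commutes with the flow $\phi_t$, so both $\phi_T$ and $\sigma$ are self-diffeomorphisms of the common orbit that commute with $\phi_t$. At any crossing point $x_0=(0,\xi_1^\ast)$ on the orbit, energy conservation and $q_1(t+T)^2=q_1(t)^2$ (hence also $\xi_1(t+T)^2=\xi_1(t)^2$) force $\phi_T(x_0)\in\{x_0,\sigma(x_0)\}$, so $\phi_T$ and $\sigma$ agree at $x_0$ up to a choice; commutativity with the flow then propagates the chosen identity to the whole orbit. Thus either $q_1$ is $T$-periodic, or $q_1(t+T)=-q_1(t)$ for all $t$.

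The remainder is bookkeeping. From \eqref{Hamsys2} or \eqref{Hamsys3}, $\dot q_2=X_2/q_1^2$ and $\dot q_3=X_3/(2+q_1^2)$; both depend on $q_1$ only through $q_1^2$ and are therefore $T$-periodic in $t$. Writing $q_j(t)=\omega_j t+P_j(t)$ with $P_j$ $T$-periodic, the mean values
\[
\omega_1 = \tfrac{1}{T}\int_0^T\tfrac{X_3}{2+q_1(t)^2}\,dt,\qquad \omega_2 = \tfrac{1}{T}\int_0^T\tfrac{X_2}{q_1(t)^2}\,dt \ (X_2\ne0),\qquad \omega_2 = 0 \ (X_2=0),
\]
match the formulas in the theorem via $2+q_1^2=\tfrac12|A|^2+1$ and $q_1^2=\tfrac12|A|^2-1$. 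Splitting the rotations with $U(a+b)=U(a)U(b)$ yields
\[
A(t)=U\!\left(\tfrac12(\omega_1+\omega_2)t\right)\hat A(t)\,U\!\left(\tfrac12(\omega_1-\omega_2)t\right),
\]
with $\hat A(t)=\tfrac1{\sqrt2}\,U(\tfrac12(P_3(t)+P_2(t)))(\rho(t)I+q_1(t)K)U(\tfrac12(P_3(t)-P_2(t)))$, where $P_2\equiv q_2(0)$ when $X_2=0$. If $q_1$ is $T$-periodic, so is $\hat A$. If instead $q_1(t+T)=-q_1(t)$, then $\rho(t)I+q_1(t)K$ becomes $\rho(t)I-q_1(t)K$ at $t+T$, making $\hat A$ $2T$-periodic (but not $T$-periodic), matching the stated dichotomy. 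The main obstacle is the symmetry argument in the second paragraph establishing $q_1(t+T)=-q_1(t)$ in the $A\in\sotwo$-sometime sub-case; everything else is routine manipulation with the rotation homomorphism.
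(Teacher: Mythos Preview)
Your proof is correct and follows essentially the same route as the paper: invoke the explicit parametrization of $A(t)$ from Theorems \ref{mainthm1} and \ref{mainthm2}, determine the periodicity of $q_1$, write $q_2$ and $q_3$ as a linear drift plus a $T$-periodic remainder, and split the rotations via $U(a+b)=U(a)U(b)$. The only substantive difference is in the middle step: the paper establishes that $q_1$ is $T$- or $2T$-periodic by citing the phase-plane Theorems \ref{htlevelsets} and \ref{hzlevelsets} and invoking ``symmetry'', whereas you supply a self-contained argument via the involution $\sigma(q_1,\xi_1)=(-q_1,-\xi_1)$ and its commutation with the flow---a cleaner justification of exactly the symmetry the paper has in mind.
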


\begin{proof}
By Theorems \ref{mainthm1} and \ref{mainthm2}, we can write
\begin{equation}
\label{Aform1.1}
A(t)=\tfrac1{\sqrt2}U \left(\half(q_3(t)+q_2(t))\right) \left(\rho(t) I+q_1(t) K\right)U \left(\half(q_3(t)-q_2(t))\right),
\end{equation}
where the $q_i(t)$ are obtained by solving \eqref{Hamsys2}, if $X_2\ne0$, or \eqref{Hamsys3}, if $X_2=0$, with
appropriate initial data.  

We have that $q_1^2=\half|A|^2-1$ is nonconstant and $T$-periodic with $T>0$.
Note $q_1(t)=0$ if and only if $A(t)\in\sotwo$.
By Theorems \eqref{htlevelsets} and \ref{hzlevelsets}, we see that either $q_1(t)\ne0$, for all $t\in\rr$,
whence $q_1$ is also $T$-periodic, or $q_1(t)=0$ for a sequence of times $t_0+kT$, $k\in\zz$, whence
by symmetry, $q_1$ is $2T$-periodic.
Thus, we have that 
\[
\rho(t) I+q_1(t) K
\]
is either $T$- or $2T$-periodic, depending on whether
$A$ passes through $\sotwo$ or not.  

Notice that for  each $i=2,3$, the quantity
 $\omega_i$ defined above is the mean of $\dot q_i$ over one period.
 Thus, we have that
\[
q_i(t)-\omega_it
\]
is $T$-periodic.
So the now result follows from \eqref{Aform1.1} by writing
\[
U \left(\half(q_3(t)\pm q_2(t))\right)=U\left(\half(\omega_1\pm\omega_2)t\right) 
U \left(\half(q_3(t)-\omega_1t\pm q_2(t)\mp\omega_2t )\right)
\]
\end{proof}

\begin{remark}
Note that the result holds for rigid solutions.  In this case, the quantity $|A(t)|$ is constant
and thus $T$-periodic for all $T\ge0$.  Any value of $T>0$ can be used in computing the frequencies.
Thus, solutions are generically quasiperiodic.
\end{remark}

\newcommand{\fp}[1]{\{#1\}}

\begin{theorem}
\label{recurrence}
Let $A\in\curve$ be a solution of \eqref{mainode} 
such that the quantity $|A(t)|$ is nonconstant and $T$-periodic for some $T>0$.

For every $N\in\mathbb N$, there exists $\ell\in\{1,\ldots,N^2\}$ depending on $N$ such that
\[
|A(2\ell  T+t)-A(t)|\le 8\pi|A(t)|/N,\isp{for all} t\in\rr.
\]

If $A(t)$ is rigid, then either $A(t)$ is periodic or
the range of $A(t)$ is dense in the sphere of radius $|A(0)|$ in $\sltwo$.  
\end{theorem}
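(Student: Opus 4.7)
My plan starts from the quasi-periodic decomposition $A(t) = U(\tfrac12(\omega_1+\omega_2)t)\,\hat A(t)\, U(\tfrac12(\omega_1-\omega_2)t)$ supplied by Theorem \ref{multiperiodic}, in which $\hat A$ is $2T$-periodic (using the worst-case period across both sub-cases of whether $A$ passes through $\sotwo$). Substituting $t\mapsto 2\ell T + t$ and using this periodicity yields the identity
\[
A(2\ell T + t) = U(\alpha_\ell)\,A(t)\,U(\beta_\ell),\qquad \alpha_\ell = \ell(\omega_1+\omega_2)T,\quad \beta_\ell = \ell(\omega_1-\omega_2)T,
\]
so the recurrence reduces to producing a small $\ell$ for which $(\alpha_\ell,\beta_\ell)$ is close to the lattice $(2\pi\mathbb{Z})^2$.

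To do so I tile $[0, 2\pi)^2$ into $N^2$ axis-aligned squares of side $2\pi/N$ and apply the pigeonhole principle to the $N^2+1$ points $(\alpha_\ell,\beta_\ell)\bmod 2\pi$ for $\ell = 0, 1, \ldots, N^2$; two share a cell, and subtracting gives $\ell\in\{1,\ldots,N^2\}$ with both $\alpha_\ell$ and $\beta_\ell$ within $2\pi/N$ of $2\pi\mathbb{Z}$. The split
\[
A(2\ell T + t) - A(t) = U(\alpha_\ell)\,A(t)\,(U(\beta_\ell) - I) + (U(\alpha_\ell) - I)\,A(t),
\]
combined with the left $\sotwo$-invariance of the Frobenius norm (Lemma \ref{iso}), the submultiplicative bound $|MN|\le|M||N|$, and the elementary estimate $|U(\theta)-I| \le \sqrt 2\,|\theta|$ (which follows from $|U(\theta)-I|^2 = 8\sin^2(\theta/2)$), yields $|A(2\ell T + t) - A(t)| \le 4\sqrt 2\,\pi\,|A(t)|/N \le 8\pi\,|A(t)|/N$, which is the claimed bound uniformly in $t$.

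For the rigid case, $|A|^2 = 2+2q_1^2$ being constant forces $q_1$ to be constant; then \eqref{Hamsys2} (or \eqref{Hamsys3} when $X_2 = 0$) gives $\xi_1\equiv 0$ and makes $q_2,q_3$ linear in $t$, so Theorem \ref{mainthm1} or \ref{mainthm2} reduces $A$ to the explicit form $A(t) = U(at)\,A(0)\,U(bt)$ for scalars $a,b$ depending on the invariants. The image of the continuous homomorphism $t\mapsto(U(at),U(bt))$ into the 2-torus $\sotwo\times\sotwo$ has closure a closed connected subgroup, and is therefore either a single point (when $a=b=0$), a one-dimensional subtorus (when $a,b$ are $\mathbb{Q}$-dependent but not both zero, including the cases where one of them vanishes), or the full 2-torus (when $a,b$ are $\mathbb{Q}$-linearly independent); in the first two cases the homomorphism is periodic and hence so is $A$, while in the third case the orbit $\{A(t):t\in\rr\}$ is dense in $\sotwo\cdot A(0)\cdot\sotwo$.

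The main obstacle, which I expect to require the most care, is identifying this bi-invariant orbit with the sphere $\{A'\in\sltwo:|A'|=|A(0)|\}$. The forward inclusion is Lemma \ref{iso}, and the reverse follows from a singular value decomposition $A = U_1\,\diag(\sigma_1,\sigma_2)\,V^\top$ inside $\sltwo$ (with $U_1,V\in\sotwo$ and $\sigma_1\sigma_2=1$), obtained from the standard SVD by absorbing any reflection sign into $U_1$ and $V$. The equality $|A|=|A'|$ together with $\det A = \det A' = 1$ forces the singular value multisets to coincide, and any ordering discrepancy is absorbed using the identity $Z\,\diag(\sigma_1,\sigma_2)\,Z^{-1} = \diag(\sigma_2,\sigma_1)$ with $Z\in\sotwo$, after which a direct manipulation writes $A' = U''\,A\,V''$ with $U'',V''\in\sotwo$. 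Combining this identification with the Kronecker density above completes the rigid dichotomy.
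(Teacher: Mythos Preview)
Your proof is correct and follows essentially the same route as the paper's: invoke the quasi-periodic decomposition from Theorem~\ref{multiperiodic}, apply pigeonhole on $N^2+1$ points in $[0,2\pi)^2$ to find a good $\ell$, and split $U(\alpha_\ell)A(t)U(\beta_\ell)-A(t)$ into two pieces bounded via Lemma~\ref{iso} and the estimate $|U(\theta)-I|\lesssim|\theta|$; for the rigid case, both arguments reduce to linear flow on the $2$-torus and the identification of $\sotwo\cdot A(0)\cdot\sotwo$ with the sphere $\{|A'|=|A(0)|\}$ in $\sltwo$. Your treatment is in fact slightly more explicit than the paper's in two places---you derive the rigid form $A(t)=U(at)A(0)U(bt)$ directly from \eqref{Hamsys2}/\eqref{Hamsys3} rather than appealing to the remark after Theorem~\ref{multiperiodic}, and you supply the SVD argument for the sphere identification that the paper merely asserts---and your intermediate constant $4\sqrt2\pi$ is a bit sharper than the paper's $8\pi$, but none of this changes the underlying strategy.
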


\begin{proof}
By Theorem \ref{multiperiodic}, we may write
\[
A(t)=U(\omega_1t)\hat A(t)U(\omega_2t),
\]
in which $\hat A(t)$ is $2T$-periodic. (If $A(t)$ does not pass through $\sotwo$, then
we know that $\hat A(t)$ is $T$-periodic.)

For every $x\in\rr$, there is a unique $k\in\zz$
such that
\[
\fp{x}\equiv x-2\pi k\in[0,2\pi).
\]
Consider the set of $N^2+1$ ordered pairs
\[
\left\{\;\left(\fp{\omega_12jT},\fp{\omega_22jT}\right):j=0,1,\ldots,N^2\;\right\}
\]
contained in the square $[0,2\pi)\times[0,2\pi)$.  Partition this square into $N^2$ 
congruent subsquares of side $2\pi/N$.  By the pigeonhole 
principle, two of these ordered pairs belong to the same subsquare.  It follows that there exist
$k, \ell \in\zz$ such that
$0\le k<k+\ell \le N^2$ and
\[
|\fp{\omega_i2kT}-\fp{\omega_i2(k+\ell  )T}|\le 2\pi/N,\quad i=1,2.
\]
Thus, there exist $m_i\in\zz$ such that
\[
|\omega_i2\ell  T+2\pi m_i|\le2\pi /N,\quad i=1,2.
\]
Define
\[
\tau_i=\omega_i2\ell  T+2\pi m_i.
\]

For $i=1, 2$ and $t\in\rr$, we have using  Definition \ref{rotmat} and the mean value theorem
\begin{align*}
|U(\omega_i2\ell  T+t)-U(t)|
&=
|U(\omega_i2\ell  T)-I|\\
&=|U(\tau_i)-I|\\
&=\sqrt2[(\cos \tau_i-1)^2+\sin^2\tau_i]^{1/2}\\
&=2(1-\cos\tau_i)^{1/2}\\
&\le2|\tau_i|\\
&\le 4\pi/N.
\end{align*}

For any $t\in\rr$, we have
\begin{align*}
A(2\ell  T+t)&=U(\omega_1(2\ell  T+t))\hat A(2\ell  T+t) U(\omega_2(2\ell  T+t))\\
&=U(\omega_12\ell  T)U(\omega_1t)\hat A(t)U(\omega_2 t)U(\omega_22\ell  T)\\
&=U(\tau_1)A(t)U(\tau_2).
\end{align*}
We now estimate as follows
\begin{align*}
|A(2\ell  T+t)-A(t)|
&=|U(\tau_1)A(t)U(\tau_2)-A(t)|\\
&=|[U(\tau_1)-I]A(t)U(\tau_2)+A(t)[U(\tau_2)-I]|\\
&\le |U(\tau_1)-I||A(t)||U(\tau_2)|+|A(t)||U(\tau_2)-I|\\
&\le 2(4\pi/N)|A(t)|.
\end{align*}
This proves the first statement.

If $A(t)$ is rigid, then $|A(t)|=|A(0)|$, and so by Theorem \ref{multiperiodic}
\[
A(t)=U(\omega_1t)A(0)U(\omega_2t)=U(\fp{\omega_1t})A(0)U(\fp{\omega_2t}).
\]
If  $\omega_1$ and $\omega_2$ are rationally dependent, then $A(t)$
is periodic.  
The curve 
\[
t\mapsto\left(\fp{\omega_1t},\fp{\omega_2t}\right)
\]
represents linear flow on the torus.
If $\omega_1$ and $\omega_2$ are rationally independent, then it is well-known that
the image of the curve is dense in the square
$[0,2\pi)\times[0,2\pi)$.  The set
\[
\{UA(0)V: U, V\in\sotwo\}
\]
coincides with the sphere of radius $|A(0)|$ in $\sltwo$.
Thus, the range of $A(t)$ is dense in this sphere.
\end{proof}

\begin{remark}
The only solutions $A(t)$ for which $|A(t)|$ is not periodic are those 
which are homoclinic to a rigid rotation.
Thus, the result shows that, generically, solutions are recurrent.
\end{remark}

\begin{remark}
Since
\[
|A(t)|\le \left[\tfrac2\kk X_1(A(t),\dot A(t)\right]^{1/2}
\]
and the energy is conserved,
 Theorem \ref{recurrence} shows that 
\[
|A(2\ell  T+t)-A(t)|\lesssim 1/N,\isp{for all} t\in\rr.
\]
\end{remark}

\section{Asymptotic behavior for perfect fluids, $\kk=0$}
\label{pfsec}

\begin{theorem}
Let $\kk=0$, and suppose that $A\in\curve$ is a solution of \eqref{mainode}.
Then either
\begin{itemize}
\item
$A$ is rigid,
\item
$|A(t)|\nearrow\infty$, as $|t|\to\infty$,
\item
$\ip{A(t)}{\dot A(t)}>0$, $t\in\rr$,
$|A(t)|\nearrow\infty$, as $t\to\infty$, and $|A(t)|\searrow2$, as $t\to-\infty$, or
\item
$\ip{A(t)}{\dot A(t)}<0$, $t\in\rr$, $|A(t)|\nearrow\infty$, as $t\to-\infty$, and $|A(t)|\searrow2$, as $t\to\infty$.
\end{itemize}
\end{theorem}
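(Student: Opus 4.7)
The plan is to mirror the proof of Theorem \ref{kpoptions} by reducing everything to the phase-plane pictures already catalogued in Theorems \ref{htlevelsets} and \ref{hzlevelsets}, now specialized to $\kk=0$. The bridge between $A(t)$ and the reduced coordinates is provided by Theorems \ref{mainthm1} and \ref{mainthm2}, which give $\half|A(t)|^2=1+q_1(t)^2$, so the behavior of $|A|$ is exactly that of $|q_1|$. As observed in the proof of Theorem \ref{kpinvman}, one also has $\ip{A(t)}{\dot A(t)}=\half\tfrac{d}{dt}|A(t)|^2=q_1(t)\dot q_1(t)$, and $\dot q_1$ agrees in sign with $\xi_1$ by \eqref{Hamsys2} and \eqref{Hamsys3}.

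First I would dispatch the case $X_2\ne 0$. Theorem \ref{htlevelsets} with $\kk=0$ (Fig.~\ref{phasediagram2}) shows that every level curve of $\widetilde H$ in the half plane $\{q_1>0\}$ is smooth, bounded in $\xi_1$, unbounded in $q_1$, and symmetric about the $\xi_1$-axis. By Theorem \ref{rigidsolthm}, no rigid solution exists when $\kk=0$ and $X_2\ne0$, since the rigidity condition \eqref{rigcond2} then forces $X_2=X_3=0$. Each orbit therefore traces a full level curve: $q_1$ decreases on the branch $\xi_1<0$, attains a minimum where $\xi_1=0$, and increases on the branch $\xi_1>0$, so $q_1(t)\to\infty$ as $t\to\pm\infty$, yielding the second bullet.

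Next I would handle $X_2=0$ via \eqref{Hamsys3} and Theorem \ref{hzlevelsets}. When $\xi_3=0$ (Fig.~\ref{phasediagram6}), every point of $\{\xi_1=0\}$ is an equilibrium for $(q_1,\xi_1)$ that produces a rigid solution (first bullet), and every other orbit is unbounded in $q_1$ in both time directions (second bullet). The decisive case is $\xi_3\ne0$ (Fig.~\ref{phasediagram5}): the origin is a hyperbolic saddle corresponding to the rigid rotation $U(\half X_3 t)A_0$ supplied by Theorem \ref{rigidsolthm} (first bullet), and its stable and unstable manifolds consist of four separatrix branches. On the stable branch in $\{q_1>0\}$, monotone approach to the origin forces $\xi_1<0$ throughout, so $\ip{A(t)}{\dot A(t)}=q_1(t)\dot q_1(t)<0$ on $\rr$; meanwhile $q_1(t)\searrow0$ as $t\to\infty$ and $q_1(t)\to\infty$ as $t\to-\infty$, delivering the fourth bullet. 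The unstable branch in $\{q_1>0\}$ yields the third bullet by time reversal, and the two branches in $\{q_1<0\}$ repeat these options under the symmetry $(q_1,\xi_1)\mapsto(-q_1,-\xi_1)$, which preserves $q_1\xi_1$. All remaining non-separatrix orbits are unbounded in $q_1$ in both directions, feeding back into the second bullet.

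The main obstacle is merely the bookkeeping needed to read off the signs of $q_1$ and $\xi_1$ along each separatrix in order to distinguish the third and fourth bullets. Monotonicity of $q_1$ between turning points, escape of $q_1$ to infinity along the unbounded level sets, and global existence of every orbit follow directly from the phase-plane analysis of Theorems \ref{htlevelsets} and \ref{hzlevelsets} combined with the global existence results of Section 3.
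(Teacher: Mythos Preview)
Your proposal is correct and follows essentially the same route as the paper: reduce $|A|$ to $q_1$ via Theorems \ref{mainthm1} and \ref{mainthm2}, observe that $\sign\ip{A}{\dot A}=\sign q_1\xi_1$, and then read off the four alternatives from the $\kk=0$ phase portraits in Theorems \ref{htlevelsets} and \ref{hzlevelsets}. The paper's proof is in fact much terser than yours---it simply records the sign identity and says the result ``summarizes'' those two theorems---so your case-by-case walk-through of $X_2\ne0$, $X_2=0$ with $\xi_3=0$, and $X_2=0$ with $\xi_3\ne0$ is a more explicit version of the same argument. (One harmless slip: $\ip{A}{\dot A}=\tfrac12\tfrac{d}{dt}|A|^2=2q_1\dot q_1$, not $q_1\dot q_1$, but only the sign is used.)
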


 \begin{proof}
By Theorems \ref{mainthm1}, \ref{mainthm2} and Lemma \ref{Aform1}, we have 
$\half|A|^2=1+q_1^2$, where $q_1$ is obtained by solving \eqref{Hamsys2} or \eqref{Hamsys3}.
Thus, we find that
\begin{equation*}
\label{normderiv}
\ip{A}{\dot A}= \frac{d}{dt}\half|A|^2=2q_1\dot q_1,
\end{equation*}
and so
\begin{equation}
\label{signnormderiv}
\sign\ip{A}{\dot A}=\sign q_1\xi_1.
\end{equation}
Thus, the result summarizes Theorems  \ref{htlevelsets} and \ref{hzlevelsets} when $\kk=0$.
\end{proof}

In analogy with Theorem \ref{kpinvman}, we have

\begin{theorem}
\label{kzinvman}
Fix $\kk=0$.  If $X_1=\tfrac14X_3^2>0$, then the sets
\begin{equation}
\label{invman2}
\wws=\{(A,B)\in\dd: X_i(A,B)=X_i,\; i=1,3,\; X_2(A,B)=0, \ip{A}{B}<0\}
\end{equation}
and
\begin{equation}
\label{invman3}
\wwu=\{(A,B)\in\dd: X_i(A,B)=X_i,\; i=1,3,\; X_2(A,B)=0, \ip{A}{B}>0\}
\end{equation}
are nonempty and invariant under the flow of 
of \eqref{mainode} in  $C^0(\rr,\sltwo)\cap C^2(\rr,\mm^2)$.

Let $0<\mu<\half|X_3|$.
If $A\in\curve$ is a solution of \eqref{mainode}  with initial data $(A_0,B_0)\in\wws$, then
there exist a time $T>0$ and a phase $\theta_+$ such that 
\[
\left|\left(\frac{d}{dt}\right)^j\left[
A(t)-U\left(\tfrac12X_3t+\theta_{+}\right)\right]\right|<e^{-\mu t},\quad j=0,1,\quad
 t>T.
\]
If $(A_0,B_0)\in\wwu$, then
there exist a time $T>0$ and a phase $\theta_-$ such that 
\[
\left|\left(\frac{d}{dt}\right)^j\left[
A(t)-U\left(\tfrac12X_3t+\theta_{-}\right)\right]\right|<e^{\mu t},\quad j=0,1,\quad 
 t<-T.
\]

\end{theorem}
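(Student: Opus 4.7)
The plan is to mirror the structure of Theorem~\ref{kpinvman}, with the homoclinic orbits of the MHD case replaced by the stable and unstable manifold branches of the saddle of $H_0$ at $(q_1,\xi_1)=(0,0)$, whose existence is guaranteed by Theorem~\ref{hzlevelsets} for $\kk=0$, $\xi_3\ne0$. First I would interpret $\wws$ and $\wwu$ through the reduction $\Phi\circ\Gamma\circ\Psi_0$ of Lemma~\ref{phigammapsizero}: the invariant conditions $X_1=\tfrac14X_3^2$, $X_2=0$, $\xi_3=X_3$ pin the $(q_1,\xi_1)$-dynamics to the critical level set $\{H_0=\tfrac14X_3^2\}$, and the linearization of \eqref{Hamsys3} at the origin (with $\kk=0$) has eigenvalues $\pm\tfrac12|X_3|$ with stable and unstable eigendirections $(1,\mp\tfrac12|X_3|)$. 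Hence the two stable branches have $q_1\xi_1<0$ and the two unstable branches have $q_1\xi_1>0$; by \eqref{signnormderiv} these sign conditions match the defining inequalities of $\wws$ and $\wwu$. Non-emptiness is then immediate, and invariance follows from Theorem~\ref{inv1} applied to $X_1,X_2,X_3$, together with uniqueness for \eqref{Hamsys3}: a solution cannot pass from a stable branch to an unstable one without crossing the equilibrium in finite time.

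Second, I would obtain the exponential decay of $(q_1,\xi_1)$ along a stable branch by an argument parallel to Theorem~\ref{kpinvman}. Setting $\alpha=\tfrac12|X_3|$, Taylor expansion of \eqref{Hamsys3} about $(0,0)$ yields
\[
\dot q_1-\xi_1=\oo((|q_1|+|\xi_1|)^3),\qquad
\dot\xi_1-\alpha^2 q_1=\oo((|q_1|+|\xi_1|)^3),
\]
so the hyperbolic combinations $\alpha q_1\mp\xi_1$ satisfy
\[
\frac{d}{dt}(\alpha q_1\mp\xi_1)\pm\alpha(\alpha q_1\mp\xi_1)=\oo((|q_1|+|\xi_1|)^3).
\]
On a stable branch, $(q_1,\xi_1)\to(0,0)$ as $t\to\infty$ while $q_1\xi_1<0$, so $\alpha q_1-\xi_1$ keeps a definite sign; for any $0<\mu<\alpha$ the cubic remainder is eventually dominated by the linear term (the analogue of \eqref{pos1}), yielding $|q_1(t)|+|\xi_1(t)|\lesssim e^{-\mu t}$ for $t>T$ with $T$ sufficiently large. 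The unstable case follows by time reversal.

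Third, integrating the $q_3$-equation in \eqref{Hamsys3},
\[
\dot q_3=\frac{X_3}{2+q_1(t)^2}=\tfrac12X_3-\frac{X_3\,q_1(t)^2}{2(2+q_1(t)^2)},
\]
the exponential decay of $q_1$ shows that the integral defining
\[
\theta_+=q_3(0)-\int_0^\infty\frac{X_3\,q_1(s)^2}{2(2+q_1(s)^2)}\,ds
\]
converges and that $|q_3(t)-(\tfrac12X_3t+\theta_+)|\lesssim e^{-\mu t}$. Substituting into the formula for $A(t)$ from Theorem~\ref{mainthm2} and splitting $A(t)-U(\tfrac12X_3t+\theta_+)$ into a \emph{stretch} term controlled by $|\rho(t)-\sqrt2|+|q_1(t)|$ and a \emph{rotation-error} term proportional to $U(q_3(t)-\tfrac12X_3t-\theta_+)-I$, both pieces are $\oo(e^{-\mu t})$. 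Derivative bounds follow by reinserting the estimates into \eqref{Hamsys3}.

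The main obstacle I expect is the invariance bookkeeping: because $\wws$ and $\wwu$ are defined by a \emph{strict} sign condition on $\ip{A}{B}$, one must show carefully that a solution starting in $\wws$ (resp.\ $\wwu$) never attains $\ip{A(t)}{\dot A(t)}=0$. Under the reduction, this amounts to proving that the flow of \eqref{Hamsys3} on the critical level set cannot reach the saddle $(0,0)$ in finite time, which is a uniqueness argument parallel to Corollary~\ref{passthrough} but which needs to be written with some care.
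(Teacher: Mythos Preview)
Your proposal is correct and follows essentially the same route as the paper: the paper's proof defines the sets $\cc_s(X_3)$ and $\cc_u(X_3)$ in $(q,\xi)$-coordinates via the sign of $q_1\xi_1$, identifies them with $\wws$ and $\wwu$ through $\Phi\circ\Gamma\circ\Psi_0$ and \eqref{signnormderiv}, cites Theorem~\ref{inv1} for invariance, and then simply states that ``the remainder of the proof is similar to that of Theorem~\ref{kpinvman}.'' Your decay argument with $\alpha=\tfrac12|X_3|$ and your integration of $\dot q_3$ are exactly the specializations of that proof to $\kk=0$. The ``obstacle'' you flag is not a real difficulty: since the saddle $(0,0)$ is itself an equilibrium of \eqref{Hamsys3}, uniqueness prevents any other orbit from reaching it in finite time, so $q_1\xi_1$ keeps its sign along each branch and $\ip{A}{\dot A}$ never vanishes.
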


\begin{proof}
As outlined in Theorem \ref{hzlevelsets}, 
the assumptions on the invariants imply that
the sets
\[
\cc_s(X_3)=\{(q,\xi)\in\rr^3\times\rr^3:q_1\xi_1<0,\; \xi_2=0,\; \xi_3=X_3,\;H_0(q_1,\xi_1,X_3)=H_0(0,0,X_3)\}
\]
and
\[
\cc_u(X_3)=\{(q,\xi)\in\rr^3\times\rr^3:q_1\xi_1>0,\; \xi_2=0,\; \xi_3=X_3,\;H_0(q_1,\xi_1,X_3)=H_0(0,0,X_3)\}
\]
are each the union of  two semi-bounded orbits of the system for $(q_1,\xi_1)$.
The images of $\cc_s(X_3)$ and $\cc_u(X_3)$ under $\Phi\circ\Gamma\circ\Psi_0$ are equal to $\wws$ and
$\wwu$, respectively.

The invariance of $\wws$ and $\wwu$ follows by virtue of the invariance of the quantities $X_i(A,\dot A)$, by Theorem
\ref{inv1}, together with \eqref{signnormderiv}.

The remainder of the proof is similar to that of Theorem \ref{kpinvman}, and so we shall omit it.
\end{proof}

\begin{theorem}
\label{k=0pd}
Let $A\in \curve$ be a solution of \eqref{mainode}
with initial data $(A_0,B_0)\in\dd$.
If $|A(t)|^2\nearrow\infty$ as $t\to\infty$, then there exist  $A_\infty,\;B_\infty\in \mm^2$ 
with $B_\infty\ne0$ 
such that for $t>0$,  $j=0,1,2$,
\begin{equation}
\label{errorest}
\left|\left(\frac{d}{dt}\right)^j
\left[A(t)-\left(B_\infty t +A_\infty\right)\right]\right|
\lesssim (1+t)^{-1-j}.
\end{equation}
If $\bar A_\infty,\;\bar B_\infty\in\mm^2$ is any pair such that 
\begin{equation}
\label{asympunique}
\lim_{t\to\infty}|A(t)-(\bar B_\infty t +\bar A_\infty)|=0,
\end{equation}
then $(\bar A_\infty,\bar B_\infty)=(A_\infty,B_\infty)$.

The vectors $A_\infty$, $B_\infty$ satisfy
\[
X_i(A_\infty, B_\infty)=X_i(A_0, B_0)\equiv X_i,\quad i=1,2,3,
\]
and
\begin{equation}
\label{infids}
\ip{ B_\infty}{ \cof A_\infty}=\det B_\infty
=0,\quad \det A_\infty=\frac{X_3^2-X_2^2}{2X_1}.
\end{equation}
If $2X_1+X_2^2-X_3^2=0$, then $(A_\infty,B_\infty)=(A_0,B_0)\in\dd$ and 
\[
A(t)=B_0 t + A_0.
\]

 \end{theorem}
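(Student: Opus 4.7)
The approach is to exploit the fast decay of the Lagrange multiplier supplied by Theorem \ref{litlam2}. With $\kk=0$, the equation of motion reads $\ddot A = \Lambda(A,\dot A)\cof A$ with $\Lambda(A,\dot A)=2(2X_1+X_2^2-X_3^2)/|A|^4$, and $|\cof A|=|A|$ by Lemma \ref{cofiso}, so $|\ddot A(t)|\lesssim |A(t)|^{-3}$. The plan is to establish a linear lower bound $|A(t)|\gtrsim t$, which renders $\ddot A$ integrable in $t$, and then to build $B_\infty$ and $A_\infty$ by integrating from infinity.

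For the lower bound: with $\kk=0$, conservation of $X_1=\half|\dot A|^2$ gives $|\dot A(t)|\equiv\sqrt{2X_1}$, so $|A(t)|\le |A_0|+\sqrt{2X_1}\,t$. The hypothesis $|A(t)|\to\infty$ forces $X_1>0$, since otherwise $\dot A\equiv 0$ and $A$ is constant. Differentiating $\half|A|^2$ twice and using $\ip{A}{\cof A}=2\det A=2$ on $\sltwo$, one gets $\frac{d^2}{dt^2}\half|A|^2=|\dot A|^2+\ip{A}{\ddot A}=2X_1+2\Lambda$, which tends to $2X_1>0$ as $\Lambda\to 0$. Thus $|A|^2$ is eventually strictly convex with positive second derivative, giving $|A(t)|^2\gtrsim t^2$ and $|\ddot A(t)|\lesssim (1+t)^{-3}$ for $t>0$. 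Setting $B_\infty=\dot A(0)+\int_0^\infty \ddot A(s)\,ds$ (which converges), one gets $|\dot A(t)-B_\infty|=\bigl|\int_t^\infty \ddot A(s)\,ds\bigr|\lesssim (1+t)^{-2}$; in turn $A_\infty := A(0)+\int_0^\infty (\dot A(s)-B_\infty)\,ds$ is well-defined with $|A(t)-B_\infty t-A_\infty|\lesssim (1+t)^{-1}$. Since $|B_\infty|=\sqrt{2X_1}>0$, we have $B_\infty\ne 0$. The uniqueness of $(A_\infty,B_\infty)$ relative to \eqref{asympunique} follows by dividing the difference of two candidate expansions by $t$ and letting $t\to\infty$, which forces $\bar B_\infty=B_\infty$, then $\bar A_\infty=A_\infty$.

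For the invariant identities $X_i(A_\infty,B_\infty)=X_i$, substitute the asymptotic expansion into $X_i(A(t),\dot A(t))$ (constant in $t$ by Theorem \ref{inv1}) and pass to the limit; the a priori linear-in-$t$ contributions in $X_2,X_3$ vanish because $\ip{Z\widetilde A}{\widetilde A}=0$ for any $\widetilde A\in\mm^2$, precisely the observation used in the proof of Theorem \ref{inv1}. For the constraint identities in \eqref{infids}, use $\ip{\dot A(t)}{\cof A(t)}\equiv 0$ from Lemma \ref{normalvector}; expanding asymptotically and collecting orders of $t$, the leading terms produce $2t\det B_\infty+\ip{B_\infty}{\cof A_\infty}+O(t^{-1})=0$, so dividing by $t$ and sending $t\to\infty$ yields $\det B_\infty=0$, and the remaining constant term then forces $\ip{B_\infty}{\cof A_\infty}=0$. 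The value $\det A_\infty=(X_3^2-X_2^2)/(2X_1)$ drops out of Lemma \ref{metaid} applied to $(A_\infty,B_\infty)$: with $\ip{B_\infty}{\cof A_\infty}=0$ and $\det B_\infty=0$, the left side collapses to $\ip{ZA_\infty}{B_\infty}\ip{A_\infty Z}{B_\infty}=(X_2+X_3)(X_3-X_2)=X_3^2-X_2^2$, while the right side reduces to $\det A_\infty\cdot |B_\infty|^2=2X_1\det A_\infty$.

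For the final case: if $2X_1+X_2^2-X_3^2=0$, then $\Lambda\equiv 0$ by Theorem \ref{litlam2}, so \eqref{mainode} becomes $\ddot A=0$ and $A(t)=A_0+B_0 t$, as already recorded in Lemma \ref{vanpress}; uniqueness of the asymptotic expansion then identifies $(A_\infty,B_\infty)=(A_0,B_0)\in\dd$. I expect the main obstacle to be the bookkeeping of cross terms in the various expansions: products like $O(t)\cdot O(t^{-1})$ are only $O(1)$ and do not themselves tend to zero, so one must carefully separate the coefficients of each power of $t$ by dividing before taking limits, most delicately when extracting $\det B_\infty=0$ from the tangency constraint $\ip{\dot A}{\cof A}=0$.
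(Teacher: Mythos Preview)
Your proof is correct and follows the same overall architecture as the paper: decay of $\ddot A$ from the Lagrange multiplier formula, integration to build $B_\infty$ and $A_\infty$, then passage to the limit in the invariants and constraints, with Lemma~\ref{metaid} supplying $\det A_\infty$.

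The one genuine difference is how you obtain the linear lower bound $|A(t)|\gtrsim 1+t$. The paper routes through the Hamiltonian coordinates of Sections~\ref{Hamkp}--\ref{Hamkz}: it writes $X_1$ in terms of $(q_1,\xi_1)$, observes that $\xi_1(t)^2>2X_1$ for large $t$, and concludes $|\dot q_1|$ is bounded below. Your argument is more direct: you compute $\tfrac{d^2}{dt^2}\tfrac12|A|^2=2X_1+2\Lambda\to 2X_1>0$ and read off eventual quadratic growth from convexity. This is a cleaner, self-contained route that does not rely on the coordinate machinery, and it is arguably preferable here. A second, minor variation is that you extract $\det B_\infty=0$ and $\ip{B_\infty}{\cof A_\infty}=0$ from the tangency constraint $\ip{\dot A}{\cof A}\equiv 0$, whereas the paper expands $\det A(t)\equiv 1$; these are equivalent (one is the derivative of the other) and yield the same identities.
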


\begin{proof}
By Theorems \ref{mainthm1} and \ref{mainthm2}, we can write $A(t)$ in terms of functions
 $q_i(t)$, $i=1,2,3$,  obtained by solving \eqref{Hamsys2}, if $X_2\ne0$, or \eqref{Hamsys3}, if $X_2=0$, with
appropriate initial data.  Since
\[
q_1^2(t)=\half|A(t)|^2-1\nearrow\infty\isp{as}t\to\infty,
\]
using  Lemmas \ref{qxiinv1} and \ref{phigammapsizero}, we can write 
\[
X_1=\half\left(\frac{(2+q_1(t)^2)\xi_1(t)^2}{2(1+q_1(t)^2)}+\frac{X_2^2}{q_1(t)^2}+\frac{X_3^2}{2+q_1(t)^2}\right),
\]
for $t\gg1$.
Thus, there exists a time $t_0$ such that $\xi_1(t)^2>2X_1$, for all $t>t_0$. 
From \eqref{Hamsys2},  \eqref{Hamsys3}, it follows that
\[
|\dot q_1(t)|\ge |\xi_1(t)|/2\ge (X_1/2)^{1/2},\quad t\ge t_0.
\]
This implies  the lower bound
\[
|A(t)|^2=2(1+q_1(t)^2)\gtrsim (1+t)^2,\quad t\ge 0.
\]

Since $A\in\curve$ solves \eqref{mainode}, we obtain from Lemma \ref{litlam2} that
\[
|\ddot A(t)|\lesssim |A(t)|^{-3}\lesssim (1+t)^{-3},\quad t\ge0.
\]
Thus,  by Lemma 6 of \cite{sideris-2017}, we can write
\begin{align}
\nonumber
&A(t)=B_\infty t + A_\infty + A_1(t),\\
\intertext{with}
\nonumber
& B_\infty= B_0+\int_0^\infty \ddot A(s)ds,\\
\label{asympdefs}
&A_\infty = A_0-\int_0^\infty\int_s^\infty \ddot A(\sigma) d\sigma ds, \\
\nonumber
&A_1(t)=\int_t^\infty\int_s^\infty \ddot A(\sigma)d\sigma ds.
\end{align}
Note that our estimate for $|\ddot A(t)|$ implies that
\[
\left|\left(\frac{d}{dt}\right)^jA_1(t)\right|
\lesssim (1+t)^{-1-j},\quad t\ge0,\quad j=0,\;1\;,2,
\]
thereby proving \eqref{errorest}.

If \eqref{asympunique} holds, then using \eqref{errorest}, we find that
\[
\lim_{t\to\infty}|(B_\infty-\bar B_\infty)t+(A_\infty-\bar A_\infty)|=0,
\]
and uniqueness of the states $(A_\infty, B_\infty)$ follows from this.

Applying \eqref{errorest}, we find 
\[
X_1=\tfrac12|\dot A(t)|^2= \tfrac12|B_\infty+\dot A_1(t)|^2=\tfrac12|B_\infty|^2+O(t^{-1}),\quad t>0.
\]
Sending $t\to\infty$ shows that $X_1=\tfrac12|B_\infty|^2$.

For the other invariants, we have for $i=2,3$,
\begin{multline}
X_i=X_i(A(t),\dot A(t))=X_i(B_\infty t + A_\infty + A_1(t),B_\infty+ \dot A_1(t))\\
=tX_i(B_\infty,B_\infty)+X_i(A_\infty,B_\infty)+O(t^{-1}).
\end{multline}
By Lemmas \ref{symip} and \ref{symasym}, we see that
$X_i(B_\infty,B_\infty)=0$, $i=2,3$, and so letting $t\to\infty$ we obtain
$X_i=X_i(A_\infty,B_\infty)$, $i=2,3$.

Since $A(t)\in\sltwo$, we get from Lemma \ref{detcof}
\begin{align*}
2=&\;2\det A(t)\\
=&\ip{A(t)}{\cof A(t)}\\
=&\;t^2\ip{B_\infty}{\cof B_\infty}+2t\ip{A_\infty}{\cof B_\infty}+O(1)\\
=&\;2t^2\det B_\infty+2t\ip{A_\infty}{\cof B_\infty}+O(1).
\end{align*}
This implies that
\[
\det B_\infty=0\isp{and}\ip{A_\infty}{\cof B_\infty}=0.
\]
So by Lemma \ref{metaid}, we get
\[
-X_2^2+X_3^2=\ip{ZA_\infty}{B_\infty}\ip{A_\infty Z}{B_\infty}
=\half\ip{\cof A_\infty}{A_\infty}|B_\infty|^2=\det A_\infty \; 2X_1.
\]
This verifies the statements \eqref{infids}.

If $2X_1+X_2^2-X_3^2=0$, then $\ddot A(t)=0$, by
Theorem \ref{litlam2}.  By \eqref{asympdefs},  we obtain $A_1(t)=0$ and $(A_\infty,B_\infty)=(A_0,B_0)$,
so that $A(t)=B_\infty t+A_\infty$.
\end{proof}

\begin{remark}
In Theorem \ref{asympunique}, if $2X_1+X_2^2-X_3^2\ne0$, 
then  $A_\infty\notin\sltwo$.  Hence  $(A_\infty,B_\infty)\notin\dd$,
and $B_\infty t+A_\infty\notin C^0(\rr,\sltwo)$.
\end{remark}

\begin{remark}
An analogous result holds when $|A(t)|^2\nearrow\infty$, as $t\to-\infty$.
\end{remark}

\section{The picture in the tangent space}
\label{Tansp}

In this  final section, we examine the following question: For a given initial position
$A\in\sltwo$, which initial velocities $B\in\tasltwo$ launch 
to a solution with vanishing pressure,
a rigid solution, or a solution on an invariant manifold of the rigid rotations?
We shall translate the conditions involving the invariants 
$X_i(A,B)$ and the magnitude $|A|$ given in Theorem \ref{litlam2}, Theorem \ref{rigidsolthm},
and Theorems \ref{kpinvman}, \ref{kzinvman}, respectively, into local coordinates in 
$\tasltwo$.

We first suppose that $A\in\sltwo\setminus\sotwo$
so that $(A,B)\in\dd\setminus\dd_0$.
 Lemma \ref{phigammapsi} says that there exists $(q,\xi)\in\rr_+^3\times\rr^3$
such that
\[
(A,B)=\Phi\circ\Gamma\circ\Psi(q,\xi)=(\varphi\circ\psi(q), T(q)\xi).
\]
Thus, the columns of $T(q)$ span $\tasltwo$, and these vectors are orthogonal
since the metric $h(q)$ in these coordinates is diagonal.  Let us normalize the coordinates
by setting 
\[
\hat \xi=h(q)^{1/2}\xi.
\]
Then
\[
T(q)\xi=T(q)h(q)^{-1/2}\hat \xi\equiv \hat T(q)\hat\xi,
\]
where the columns of $\hat T(q)$ are orthonormal.  Denote this 
orthonormal basis for $\tasltwo$ by $\{\tau_i(A)\}_{i=1}^3$.
In these coordinates, we have according to Lemma \ref{qxiinv1}
\[
X_1=\half|\hat\xi|^2+\kk(1+q_1^2),\quad X_2= q_1\hat\xi_2,\quad X_3=(2+q_1^2)^{1/2}\hat\xi_3,
\]
where $q_1^2=\half|A|^2-1>0$ is fixed.

{\bf Solutions with vanishing pressure.}
Theorem \ref{litlam2} shows that the Lagrange multiplier (and hence also the pressure) vanishes
if and only if
\[
2X_1+X_2^2-X_3^2=0,
\]
which is equivalent to the equation
\[
\frac{\hat\xi_1^2}{1+q_1^2}+\hat\xi_2^2-\hat\xi_3^2+2\kk=0.
\]
This relation describes
a two-sheeted hyperboloid when $\kk>0$ and a cone when $\kk=0$.
 Note that the region of positive pressure is 
 \[
 \frac{\hat\xi_1^2}{1+q_1^2}+\hat\xi_2^2-\hat\xi_3^2+2\kk>0.
 \]

{\bf Rigid solutions.}
By Theorem \ref{rigidsolthm}\ref{allrigid}, the rigid solutions in $\sltwo\setminus\sotwo$ are
characterized by the relations \eqref{rigcond1} and \eqref{rigcond2}
which reduce to
\[
\hat\xi_1=0,\quad \frac{\hat\xi_2^2}{q_1^2}+\frac{\hat\xi_3^2}{2+q_1^2}-2\kk=0.
\]
This represents an ellipse in the $\tau_2,\tau_3$ plane when $\kk>0$.
When $\kk=0$, the condition degenerates, and the only rigid solution
is an equilibrium.

{\bf Stable and unstable manifolds of the rigid rotations.}
When $\kk>0$, the collection of rigid rotations $\bigcup\{\rrr(X_3):X_3^2>8\kk\}$
have an invariant manifold $\bigcup\{\ww:X_3^2>8\kk\}$,
where $\ww$ and $\rrr(X_3)$ were defined in \eqref{invman1} and \eqref{rotinvman}, respectively.
In local coordinates, the defining conditions for this invariant manifold are
\[
\half\hat\xi_1^2+\kk q_1^2=\tfrac14q_1^2\hat \xi_3^2,
\quad \xi_2=0.
\]
If we fix $A\in\sltwo\setminus\sltwo$, then $q_1>0$ and
this defines a hyperbola in the $\tau_1,\tau_3$ plane.
When $\kk=0$, we obtain a pair of lines
\[
|\hat\xi_1|=(q_1/\sqrt2)|\hat\xi_3|,\quad \hat\xi_2=0.
\]
The interested reader can verify that $\sign\ip{A}{B}=\sign \hat\xi_1$, so that  the stable
directions $B\in\tasltwo$ with $(A,B)\in\wws$  correspond to points with $\xi_1<0$ on these lines
while the unstable directions $B\in\tasltwo$ with $(A,B)\in\wwu$  correspond to points with $\xi_1>0$,
(see \eqref{invman2}, \eqref{invman3}).

These sets are illustrated in Figure \ref{tangentspace} when $\kk>0$ and
in Figure \ref{tangentspace0} when $\kk=0$.


\begin{figure}[ht]
\caption{Distinguished directions in $\tasltwo$ for a fixed $A\in\sltwo\setminus\sotwo$, with  $\kk>0$.
The branch of   pressureless directions in the half space $\xi_3<0$ is not shown.}
 \label{tangentspace}
\ \\
\setlength\unitlength{1mm}
\begin{center}

\begin{overpic}[scale=.75]
{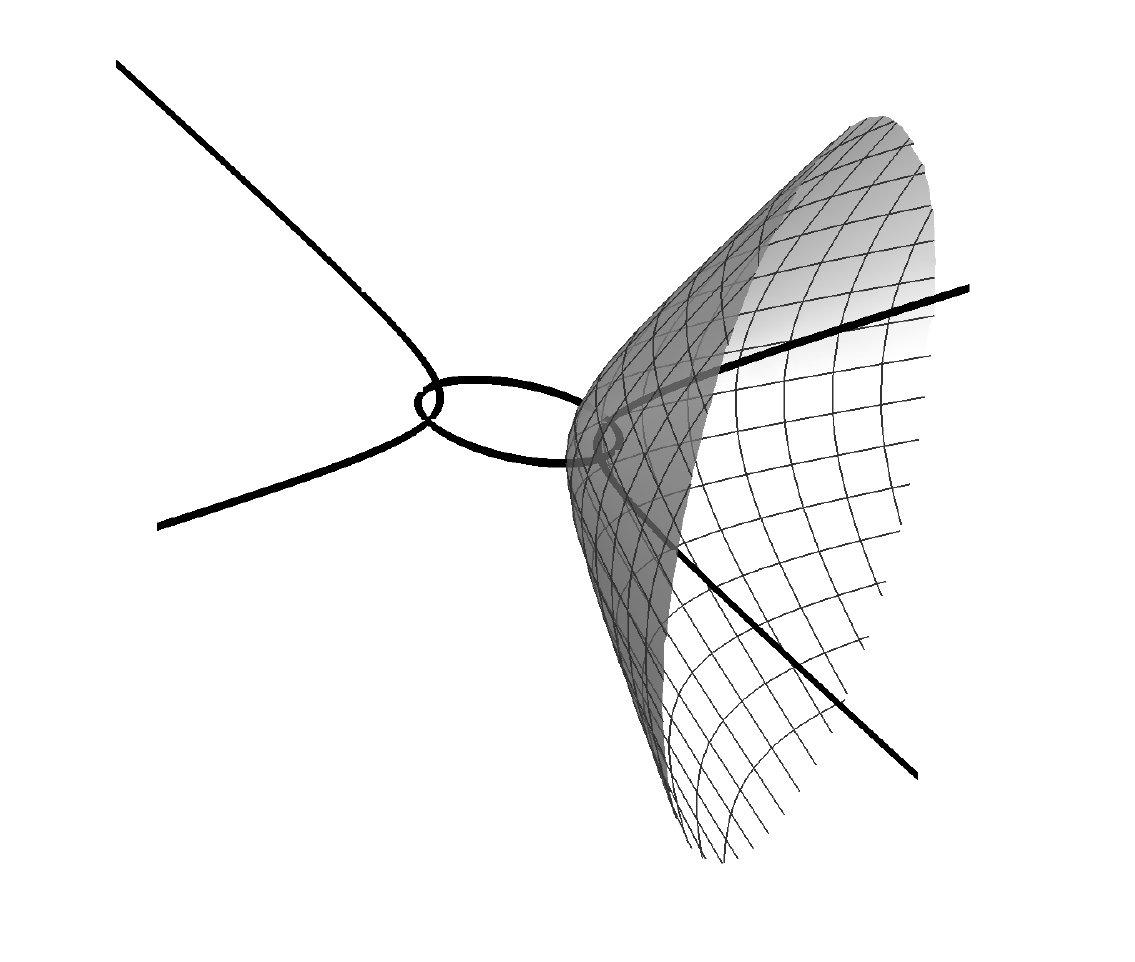}

\put(12,49){\text{Rigid}}
\put(25,50){\vector(1,0){11}}
\put(15,15){\text{Vanishing pressure}}
\put(-15,78){\text{Homoclinic}}
\put(83,15){\text{Homoclinic}}
\put(46,48){\vector(-1,-1){10}}
\put(46,48){\vector(0,1){20}}
\put(46,48){\vector(15,-4){40}}
\put(41,70){$\tau_1(A)$}
\put(30,32){$\tau_2(A)$}
\put(88,36){$\tau_3(A)$}
\end{overpic}
\end{center}
\end{figure}


\begin{figure}[ht]
\caption{Distinguished directions in $\tasltwo$ for a fixed $A\in\sltwo\setminus\sotwo$, with  $\kk=0$.
The cone of   pressureless directions in the half space $\xi_3<0$ is not shown.}
 \label{tangentspace0}
\ \\
\setlength\unitlength{1mm}
\begin{center}

\begin{overpic}[scale=.75]
{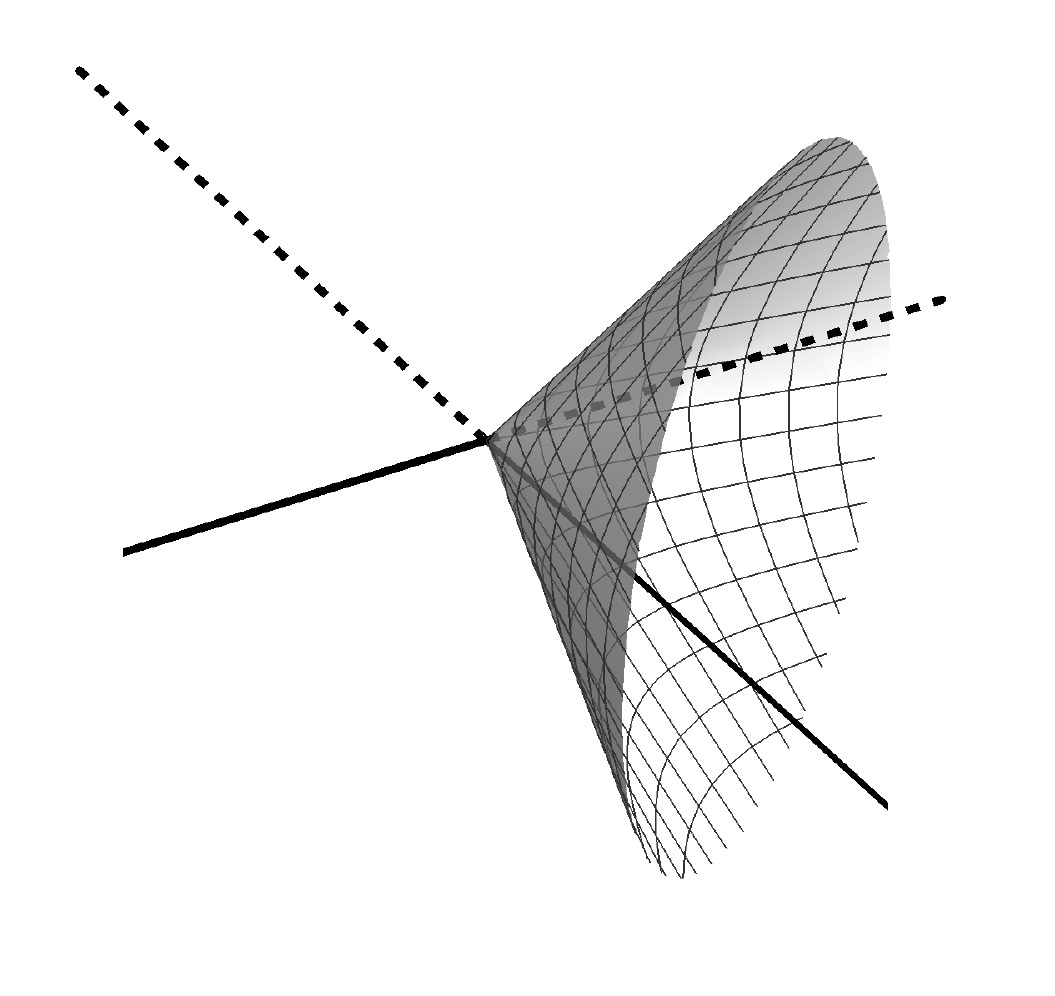}

\put(12,49){\text{Stable}}
\put(13,15){\text{Vanishing pressure}}
\put(-10,78){\text{Unstable}}
\put(46,53){\vector(-1,-1){10}}
\put(46,53){\vector(0,1){20}}
\put(46,53){\vector(15,-4){40}}
\put(41,75){$\tau_1(A)$}
\put(30,37){$\tau_2(A)$}
\put(88,41){$\tau_3(A)$}
\end{overpic}
\end{center}
\end{figure}

The situation for $A\in\sotwo$ is fairly simple.  We can use the orthonormal basis
\[
\tau_1(A)=\tfrac1{\sqrt2}AK,\quad \tau_2(A)=\tfrac1{\sqrt2}AM,\quad \tau_3(A)=\tfrac1{\sqrt2}AZ,
\]
in $\tasltwo$.
In this case, if 
\[
B=\hat\xi_i\tau_i(A)\in\tasltwo,
\]
then
\[
X_1=\half |\hat\xi|^2+\kk,\quad X_2=0,\quad X_3=\sqrt2\hat\xi_3.
\]
The solutions with vanishing pressure are 
\[
\hat\xi_1^2+\hat\xi_2^2-\hat\xi_3^2-2\kk=0,
\]
consistent with sending $q_1\to0$ above.  There are no  directions $B\in\tasltwo$
so that $(A,B)$ launches from $\dd_0$ to an invariant manifold
in $\dd\setminus\dd_0$.  Any $B\in\lspan \tau_3(A)$ leads to
a rigid rotation.

\clearpage
\appendix 
\section{Glossary of notation}
\label{glossary}

\ \\

\begin{tabular}{cl p{3in}}
Symbol & Reference &  Description\hfil \\
\ \\
$\mm^2$ &Def \ref{2b2mat} & vector space of $2\times2$ matrices over $\rr$\\
$\ip{\cdot}{\cdot}$ &Def \ref{2b2mat} & Euclidean inner product on $\mm^2$ or on $\rr^n$\\
$I, Z,K,M$ & Def \ref{zdef} & orthogonal basis vectors in $\mm^2$\\
$\sltwo$ & Def \ref{sltsot} & special linear group\\
$\sotwo$ & Def \ref{sltsot} & special orthogonal group\\
$U(\theta)$ & Def \ref{rotmat} & parameterization of $\sotwo$\\
$\cof$ & Lem \ref{cofiso} & cofactor map\\
$\tasltwo$ & Lem \ref{normalvector} & tangent space at $A\in\sltwo$\\
$\dd$ & Def \ref{tanbund} & tangent bundle / phase space \\
 $\dd_0$ & Def \ref{tanbund} & subset of $\dd$\\
$\varphi(x)$ & Def \ref{varphidef}&immersion\\
$g(x)$ & Lem \ref{metric} & metric\\
$\Phi(x,y)$&Def \ref{bigphidef}&tangent bundle map\\
$\sllatwo$ & Def \ref{sllatwodef} & special linear Lie algebra\\
$L(A,B)$ & Def \ref{velgrad} & velocity gradient map\\
$\Lambda(A,B)$ & Def \ref{lamdef} &  Lagrange multiplier\\
$X_i(A,B)$ & Def \ref{exdef} & invariant quantities\\
$H(x,p)$&Lem \ref{Ham1}&Hamiltonian\\
$\Gamma(x,p)$& Lem \ref{gammacoord}&Legendre tranformation\\
$\psi(q)$& Def \ref{psidef}&immersion\\
$\rr^3_+$&Rem \ref{halfspace}& half space\\
$\Psi(x,p)$& Def \ref{cappsidef}&canonical transformation\\
$\rr^1_3$ & Def \ref{cappsidef} &one-dimensional subspace\\
$h(q)$ & Lem \ref{phigammapsi} & metric\\
$\widetilde H(q,\xi)$&Def \ref{Ham2}&Hamiltonian\\
$H_0(q,\xi)$& Def \ref{Ham3}&Hamiltonian\\
$\ww$ & Eq \eqref{invman1}& invariant manifold\\ 
$\rrr(X_3)$ &  Eq \eqref{rotinvman} & rotational invariant manifold\\ %
$\wws$& Eq \eqref{invman2}& stable  manifold\\ 
$\wwu$ & Eq \eqref{invman3} &   unstable manifold\\ 

\end{tabular}

\clearpage

\bibliography{SL2R}

\begin{thebibliography}{10}

\bibitem{Arnold-1966}
V.~Arnold.
\newblock Sur la g\'eom\'etrie diff\'erentielle des groupes de {L}ie de
  dimension infinie et ses applications \`a l'hydrodynamique des fluides
  parfaits.
\newblock {\em Ann. Inst. Fourier (Grenoble)}, 16(fasc. 1):319--361, 1966.

\bibitem{Christodoulou-Lindblad}
Demetrios Christodoulou and Hans Lindblad.
\newblock On the motion of the free surface of a liquid.
\newblock {\em Comm. Pure Appl. Math.}, 53(12):1536--1602, 2000.

\bibitem{Coutand-Shkoller-2007}
Daniel Coutand and Steve Shkoller.
\newblock Well-posedness of the free-surface incompressible {E}uler equations
  with or without surface tension.
\newblock {\em J. Amer. Math. Soc.}, 20(3):829--930, 2007.

\bibitem{Coutand-Shkoller-2010}
Daniel Coutand and Steve Shkoller.
\newblock A simple proof of well-posedness for the free-surface incompressible
  {E}uler equations.
\newblock {\em Discrete Contin. Dyn. Syst. Ser. S}, 3(3):429--449, 2010.

\bibitem{GuWa2016}
X.~{Gu} and Y.~{Wang}.
\newblock {On the construction of solutions to the free-surface incompressible
  ideal magnetohydrodynamic equations}.
\newblock {\em ArXiv e-prints}, September 2016.

\bibitem{Lindblad-2005-2}
Hans Lindblad.
\newblock Well-posedness for the motion of an incompressible liquid with free
  surface boundary.
\newblock {\em Ann. of Math.}, 162(1):109--194, 2005.

\bibitem{ShZe2006}
Jalal Shatah and Chongchun Zeng.
\newblock Geometry and a priori estimates for free boundary problems of the
  {E}uler equation.
\newblock {\em Comm. Pure Appl. Math.}, 61(5):698--744, 2008.

\bibitem{sideris-2014}
Thomas~C. Sideris.
\newblock Spreading of the free boundary of an ideal fluid in a vacuum.
\newblock {\em J. Differential Equations}, 257(1):1--14, 2014.

\bibitem{sideris-2017}
Thomas~C. Sideris.
\newblock Global existence and asymptotic behavior of affine motion of 3{D}
  ideal fluids surrounded by vacuum.
\newblock {\em Arch. Ration. Mech. Anal.}, 225(1):141--176, 2017.

\bibitem{SuWaZh2017}
Yongzhong Sun, Wei Wang, and Zhifei Zhang.
\newblock Well-posedness of the plasma-vacuum interface problem for ideal
  incompressible {MHD}.
\newblock {\em Arch. Ration. Mech. Anal.}, 234(1):81--113, 2019.

\bibitem{ZhZh2008}
Ping Zhang and Zhifei Zhang.
\newblock On the free boundary problem of three-dimensional incompressible
  {E}uler equations.
\newblock {\em Comm. Pure Appl. Math.}, 61(7):877--940, 2008.

\end{thebibliography}
\bibliographystyle{plain}
\end{document}